\newtheorem{thm}{Theorem}[section]
\newtheorem{defn}[thm]{Definition}
\newtheorem{cor}[thm]{Corollary}
\newtheorem{lem}[thm]{Lemma}
\newtheorem{prop}[thm]{Proposition}
\newtheorem{rmk}{Remark}
\numberwithin{equation}{section}
\newcommand{\pslc}{{\mathrm{PSL}_2 (\mathbb{C})}}
\newcommand{\cp}{\mathbb{C}\mathrm{P}^1}
\newcommand{\C}{{\mathbb C}}
\newcommand{\pslnc}{\mathrm{PSL}_n(\mathbb{C})}
\newcommand{\HH}{\mathbb{H}}
\title[ On harmonic maps from $\mathbb{C}$ to $\mathbb{H}^3$] {On harmonic maps from\\ the complex plane to hyperbolic 3-space}
\author{Subhojoy Gupta and Gobinda Sau }
\address{Department of Mathematics, Indian Institute of Science, Bangalore 560012, India}
\email{subhojoy@iisc.ac.in}
\address {
Stat-Math Unit,
Indian Statistical Institute 
Bangalore,
560059, India}
\email{gobindasau@isibang.ac.in}
\keywords{Harmonic map, Heat flow, Twisted ideal polygon}
\thanks{ }
\DeclareMathOperator{\hp}{Hopf}
\begin{document}

\maketitle
\begin{abstract}
For any twisted ideal polygon in $\mathbb{H}^3$, we construct a harmonic map from $\mathbb{C}$ to $\mathbb{H}^3$ with a polynomial Hopf differential, that is asymptotic to the given polygon, and is a bounded distance from a pleated plane. Our proof uses the harmonic map heat flow. We also show that such a harmonic map is unique once we prescribe the principal part of its Hopf differential.
\end{abstract}
\section{Introduction}


%
The study of harmonic maps from the complex plane $\mathbb{C}$ to the hyperbolic plane $\mathbb{H}^2$ have received considerable attention. One of the results that has been a motivation for this article, is the following (from \cite{HTTW95}, see also \cite{HanRemarks}):

\begin{thm}[Han-Tam-Treibergs-Wan]\label{httw}
Given a polynomial quadratic differential $\phi$ on $\mathbb{C}$, there exists a harmonic map $\mathbb{C}$ to $\mathbb{H}^2$ with Hopf differential $\phi$ with image bounded by an ideal polygon. Conversely, given an ideal polygon in $\mathbb{H}^2$, there exists a harmonic map from $\mathbb{C}$ to $\mathbb{H}^2$ that is a diffeomorphism to the region bounded by that polygon.
\end{thm}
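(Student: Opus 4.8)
The plan is to reduce both halves of the theorem to the study of a single semilinear elliptic equation on $\mathbb{C}$, in the spirit of the Bochner formalism for harmonic maps between surfaces, and then to extract the geometry of the image from the asymptotics of solutions. Suppose $w\colon\mathbb{C}\to\mathbb{H}^2$ is harmonic with Hopf differential $\phi$. On $\mathbb{C}\setminus\phi^{-1}(0)$ write the holomorphic and antiholomorphic energy densities (relative to $|dz|^2$ and the hyperbolic metric) as $\mathcal H=|\phi|e^{2u}$ and $\mathcal L=|\phi|e^{-2u}$; then $\mathcal H\mathcal L=|\phi|^2$ automatically, the Jacobian equals $\mathcal H-\mathcal L=2|\phi|\sinh 2u$, so $w$ is an orientation-preserving immersion precisely where $u>0$, and harmonicity, together with $K_{\mathbb H^2}=-1$, forces $u$ to satisfy the Bochner equation
$$\Delta u = 2|\phi|\sinh 2u ,$$
with $u(z)\sim-\tfrac{k}{2}\log|z-z_0|$ near a zero $z_0$ of $\phi$ of order $k$ (forced by smoothness of the pulled-back metric). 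Conversely, a positive solution with that singular behaviour produces a smooth metric of curvature $-1$ on $\mathbb{C}$ whose developing map is a harmonic map into $\mathbb{H}^2$ with Hopf differential $\phi$, unique up to post-composition by an isometry. So for the first statement it suffices to solve this equation; for the converse one needs in addition to recognise when the image is a prescribed polygon.

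For existence, exhaust $\mathbb{C}$ by Euclidean disks $D_R$, solve on each the Dirichlet problem $\Delta u_R=2|\phi|\sinh 2u_R$ with boundary value tending to $+\infty$ (the logarithmic poles at zeros of $\phi$ are dealt with by subtracting an explicit harmonic model and a secondary exhaustion there), and pass to a limit. Since $u\equiv 0$ is a subsolution the $u_R$ are nonnegative, and the maximum principle makes $R\mapsto u_R$ monotone, so everything reduces to a locally uniform \emph{upper} bound. This is the crux. A polynomial differential of degree $n$ has, near infinity, exactly $n+2$ half-plane ends of its horizontal foliation, and on each end the natural coordinate $\zeta=\int\sqrt{\phi}\,dz$ turns the equation into the autonomous $\Delta_\zeta u=2\sinh 2u$, for which the one-dimensional solutions $u(y)=2\operatorname{arctanh}(e^{-2(y-y_0)})$ of $u''=2\sinh 2u$, with $y$ the coordinate normal to the edge of the half-plane (blowing up at $y=y_0$, decaying as $y\to\infty$), furnish the needed barriers; patching these over the $n+2$ ends plus a crude barrier over the compact core bounds the $u_R$ independently of $R$. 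The limit $u$ is then a positive solution on all of $\mathbb{C}$, yielding a harmonic immersion $w$; reading off the half-plane asymptotics of $u$ shows that along each of the $n+2$ ends $w$ converges to a complete geodesic, consecutive geodesics share an endpoint, and $w$ is in fact a proper diffeomorphism onto the region they bound --- an ideal $(n+2)$-gon. (Alternatively this map could be produced by running the harmonic map heat flow from an explicit initial map pleated along the horizontal foliation of $\phi$ --- the technique used in the body of this paper --- provided one then checks that the flow does not alter the principal part of the Hopf differential.)

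For the converse, given an ideal polygon $P$ with $m\ge 3$ vertices, again exhaust $\mathbb{C}$ by disks $D_R$, fix $m$ marked points on $\partial D_R$, and let $w_R\colon D_R\to\mathbb{H}^2$ be the energy-minimising harmonic map (unique, since the target is nonpositively curved) whose boundary value runs monotonically once around $\partial P$, collapsing the $m$ complementary arcs of $\partial D_R$ onto the $m$ ideal vertices; by convexity of $P$ its image lies in $\overline{P}$, and by a Rad\'o--Kneser--Choquet type argument for harmonic maps into convex domains, $w_R$ is a diffeomorphism of $D_R$ onto $\operatorname{int}P$ (the mild singularity of $\partial P$ at the ideal vertices being absorbed by first truncating $P$ with horocycles and letting the truncation recede). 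An interior energy estimate with $\varepsilon$-regularity gives $C^1$ bounds on compacta uniform in $R$, so a subsequence of the $w_R$ converges to a harmonic map $w\colon\mathbb{C}\to\overline{P}$; a barrier argument near infinity shows $w$ is proper with image $\operatorname{int}P$, and being an orientation-preserving limit of diffeomorphisms with nonnegative Jacobian, $w$ is itself a diffeomorphism. As a consistency check, its Hopf differential is then a polynomial of degree $m-2$, reconnecting with the first half.

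The step I expect to fight with is the control near infinity, essentially the same obstacle on both sides. In the existence direction it is the locally uniform upper bound on $u_R$: one must prevent the limit from collapsing to the trivial solution $u\equiv 0$ (the degenerate case, where the image would collapse onto a single geodesic) while simultaneously keeping $u$ finite on compact sets, and this is exactly where the explicit half-plane model solutions are indispensable --- no soft compactness argument suffices. In the converse it is showing that the limiting map is proper with image exactly $\operatorname{int}P$ and stays injective in the limit; both become accessible only once one controls $w_R$ on the annuli near $\partial D_R$ by the same half-plane models, which is why the two halves of the theorem are really proved together.
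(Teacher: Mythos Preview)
The paper does not prove this theorem: Theorem~\ref{httw} is quoted from \cite{HTTW95} (and \cite{HanRemarks}) as a known result motivating the work, and no argument for it is given here. What the paper does contain is a sketch of some of the asymptotic estimates underlying the first half --- Lemma~\ref{est0} and Proposition~\ref{est} --- namely the exponential decay of $e-2$ away from the zeros of $\phi$ and the resulting convergence of horizontal leaves to geodesics, which is precisely the mechanism you invoke to see the ideal polygon at infinity.

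Your outline is broadly the approach of the original literature (Wan, Han, Han--Tam--Treibergs--Wan): reduce to the scalar equation \eqref{vortex} (your $\Delta u = 2|\phi|\sinh 2u$ is the same equation in $|\phi|$-coordinates), produce a global solution by exhausting $\mathbb{C}$ and using sub/supersolution barriers, and read off the polygon from half-plane asymptotics. For the converse direction, your approach via energy-minimising Dirichlet problems on disks with boundary wrapping $\partial P$ is a reasonable route, though it differs in flavour from the purely PDE-theoretic existence arguments in \cite{HTTW95}; the issues you flag at the end --- properness, non-degeneracy of the limit, and injectivity --- are indeed the serious steps, and your proposal does not fully resolve them (the Rad\'o--Kneser--Choquet argument does not pass to the noncompact limit without further work). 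The parenthetical suggestion to use the heat flow instead is not how \cite{HTTW95} proceeds and would import the very difficulties this paper is devoted to.
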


Here, recall that the Hopf differential of a map is the $(2,0)$-part of the pullback of the metric in the target space; for a harmonic map from a surface it defines a holomorphic quadratic differential in the domain. Indeed, the harmonicity of a map $u:\C \to \HH^2$ can be shown to be equivalent to the elliptic PDE 
\begin{equation}\label{vortex}
\Delta w=e^{2w}-|{\phi}|^2e^{-2w},
\end{equation}
where $w=\ln||\partial u||$ (the logarithm of the holomorphic energy density), and $\phi$ is the Hopf differential.  It turns out that \eqref{vortex} is exactly the Gauss-Codazzi equation for a space-like constant mean-curvature surface in Minkowski $3$-space, and its Gauss map is the harmonic map $u$. (See \cite{noteGobinda} for an exposition.)


\vspace{.05in} 

This article concerns harmonic maps from $\mathbb{C}$ to hyperbolic $3$-space $\mathbb{H}^3$ with polynomial Hopf differential, which can no longer be derived from solutions of the preceding equation. However, adapting the work in \cite{Min92} and \cite {HTTW95}, one can still use \eqref{vortex} to show that such a harmonic map is asymptotic to a \textit{twisted ideal polygon} in $\mathbb{H}^3$ (see Proposition \ref{myprop}). 

Here, a twisted ideal polygon  in $\mathbb{H}^3$ comprises a cyclically ordered set of ideal points in $\partial_\infty \mathbb{H}^3$ and bi-infinite geodesics between successive points (see Definition \ref{defn:tip} for a more precise definition, and see Figure \ref{fig2}).  Moreover, a map from $\C$ to $\HH^3$ is  said to be \textit{asymptotic} to a twisted ideal polygon $P$ if for any diverging sequence $\{x_k\}_{k\geq 1}$ in $\mathbb{C}$, their images under the map converge (after passing to a subsequence) to a point in $P$.

\vspace{.05in}

We prove that indeed, any  twisted ideal polygon arises as the asymptotic limit of some such harmonic map:

\begin{thm}\label{main0}
Given a twisted ideal polygon in $\HH^3$ with $n\geq 3$ ideal vertices, there exists a harmonic map from $\mathbb{C}$ to $\mathbb{H}^3$ asymptotic to that polygon, and has a Hopf differential $q(z) dz^2$ where $q(z)$ is a polynomial of degree $(n-2)$.
\end{thm}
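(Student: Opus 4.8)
The plan is to produce the harmonic map as a limit of solutions of Dirichlet problems on an exhaustion of $\mathbb{C}$ by disks, solving each problem by the harmonic map heat flow, and to control the limit by comparison with a pleated plane built from $P$. \emph{A model pleated plane.} To the twisted ideal polygon $P=(v_1,\dots,v_n;g_1,\dots,g_n)$ I associate a pleated plane $\Pi\colon\mathbb{C}\to\mathbb{H}^3$: fix a polynomial $q_0$ of degree $n-2$, equip $\mathbb{C}$ with the flat cone metric $|q_0|\,|dz|^2$ (which has $n-2$ cone points and $n$ Euclidean half-plane ends separated by rays $\rho_1,\dots,\rho_n$), map the $i$-th end totally geodesically onto a geodesic half-plane of $\mathbb{H}^3$ asymptotic to $v_i$, and glue consecutive ends along $\rho_i$ with the bending angle dictated by the position of $P$ (so that $\rho_i$ maps into $g_i$). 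After rounding the creases inside a thin neighborhood $N$ of $\bigcup_i\rho_i$ and of the cone points, one obtains a smooth map, still denoted $\Pi$, which is totally geodesic — hence harmonic — on $\mathbb{C}\setminus N$, has bounded tension field supported in $N$, is $O(1)$-Lipschitz for $|q_0|\,|dz|^2$, has image in the closed convex hull $C(P)\subset\overline{\mathbb{H}^3}$, and is asymptotic to $P$.

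\emph{Heat flow on disks.} For $R>0$, on $D_R=\{|z|<R\}$ run the flow $\partial_t u=\tau(u)$ with initial data $\Pi|_{D_R}$ and boundary values frozen equal to $\Pi|_{\partial D_R}$. Since $\mathbb{H}^3$ is a Hadamard manifold and $D_R$ is compact with boundary, the flow exists for all time and converges smoothly to the unique harmonic map $u_R\colon D_R\to\mathbb{H}^3$ agreeing with $\Pi$ on $\partial D_R$; as the energy minimizer for these boundary values, and since $C(P)\cap\mathbb{H}^3$ is convex and complete, composing with nearest-point projection shows $u_R(D_R)\subset C(P)$.

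\emph{The a priori $C^0$ estimate.} The crux is to prove $\sup_{D_R}d(u_R,\Pi)\le C$ with $C$ independent of $R$. On $D_R\setminus N$ both $u_R$ and $\Pi$ are harmonic into a $\mathrm{CAT}(0)$ space, so $d(u_R,\Pi)$ is subharmonic there; since it vanishes on $\partial D_R$, its maximum over $\overline{D_R}$ is attained on $\overline{N}\cap D_R$, and it suffices to bound it there. On $N$ one has $\Delta\tfrac12 d(u_R,\Pi)^2\ge -|\tau(\Pi)|\,d(u_R,\Pi)$, with $|\tau(\Pi)|$ controlled by the (bounded) bending data, and one compares $d(u_R,\Pi)$ with an explicit supersolution adapted to the flat geometry of the thin strips of $N$ around the rays $\rho_i$ and to the sign of the bending, using $u_R(D_R)\subset C(P)$ to rule out escape of mass. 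I expect this to be the main obstacle: the bending locus is non-compact, so the barrier must be modeled on the Euclidean half-plane ends of $|q_0|\,|dz|^2$ rather than on a crude global one, and the resulting bound must be genuinely uniform in $R$.

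\emph{Passing to the limit and identifying the Hopf differential.} Granting the $C^0$ bound, interior gradient estimates for harmonic maps into Hadamard manifolds (bounding $\|du\|$ by the oscillation of $u$, followed by elliptic bootstrapping) give uniform $C^k_{\mathrm{loc}}$ control of $u_R$; a subsequence then converges in $C^\infty_{\mathrm{loc}}$ to a harmonic map $u\colon\mathbb{C}\to\mathbb{H}^3$ with $\sup_{\mathbb{C}}d(u,\Pi)\le C$. Its Hopf differential $\phi$ is holomorphic on $\mathbb{C}$, and applying the gradient estimate at the scale of $|q_0|\,|dz|^2$ — on which $\Pi$ is $O(1)$-Lipschitz and $u$ stays a bounded distance from $\Pi$ — yields an energy-density bound $e(u)(z)=O(|z|^{n-2})$, hence $|\phi|(z)=O(|z|^{n-2})$, so $\phi=q(z)\,dz^2$ with $\deg q\le n-2$ by Liouville's theorem. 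Finally, the asymptotic analysis underlying Proposition \ref{myprop}, applied to $u$ — with $d(u,\Pi)\le C$ trapping the $i$-th end of $u$ near the geodesic half-plane of $\Pi$ running to $v_i$, and a Phragm\'en--Lindel\"of-type refinement upgrading this to genuine convergence of that end to $v_i$ and of the adjacent sides to $g_i$ — shows that $u$ is asymptotic to $P$; since a harmonic map with polynomial Hopf differential of degree $d$ is asymptotic to a twisted ideal polygon with $d+2$ vertices, this forces $d=n-2$, completing the proof.
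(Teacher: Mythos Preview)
Your approach differs substantially from the paper's, and the step you flag as the main obstacle is a genuine gap your outline does not close.

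First, your model map $\Pi$ is misconstructed: a Euclidean half-plane (an end of the $|q_0|$-metric) cannot be mapped ``totally geodesically'' onto a hyperbolic half-plane, since one is flat and the other has curvature $-1$, and no such map would be $O(1)$-Lipschitz. The paper's pleated plane $\Xi$ (Definition~\ref{mapP}) is built differently: the domain is the hyperbolic region bounded by a planar polygon $P_0$ with Euclidean half-planes grafted along each geodesic side; on the hyperbolic core $\Xi$ is a piecewise totally-geodesic embedding bent along \emph{diagonals} of $P_0$, while each grafted Euclidean half-plane is \emph{collapsed} to the adjacent geodesic side. This $\Xi$ is piecewise harmonic without any smoothing, so $d(u_t,\Xi)$ is a weak subsolution of the heat equation on all of $\C$ (Lemma~\ref{weak}), and no separate barrier on a bending neighborhood is needed.

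Second, even with a corrected comparison map, your barrier argument on $N$ does not deliver the uniform $C^0$ bound. After smoothing, $|\tau(\Pi)|$ is merely bounded and does not decay along the infinite rays $\rho_i$, so the inequality $\Delta\tfrac12 d(u_R,\Pi)^2\ge -|\tau(\Pi)|\,d(u_R,\Pi)$ on the non-compact strips $N\cap D_R$ yields no control uniform in $R$; your appeal to ``the sign of the bending'' and $u_R(D_R)\subset C(P)$ is a gesture, not an argument. The paper avoids this entirely: it runs the heat flow globally on $\C$ (not on an exhaustion) from an initial map $u_0$ that is not the pleated plane but a modification of a known harmonic map $h\colon\C\to\HH^2$ asymptotic to $P_0$, engineered so that $|\tau(u_0)|$ decays \emph{exponentially} (Lemma~\ref{intptf}). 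The uniform bound $d(u_t,\Xi)\le M$ then follows from the parabolic maximum principle (Corollary~\ref{Psib}) once $d(u_0,\Xi)\le M$ is known; that initial bound (Lemma~\ref{Pb}) is itself the heart of the matter and is obtained by comparing $h$ with an auxiliary harmonic map to a metric tree, matching principal parts of the Hopf differentials --- an ingredient entirely absent from your sketch.
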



\vspace{.05in} 

Our proof uses the harmonic map heat flow, originally introduced in a seminal paper of Eells-Sampson (\cite{EellsSampson}) in the context of \textit{compact} Riemannian manifolds.  Starting with an initial map $u_0: \mathbb{C} \to \HH^3$ we consider the PDE
\begin{equation}\label{heat1}
\begin{split}
\frac{\partial u}{\partial t}&=\tau(u(x,t))\\
 u(x,0)&=u_0(x),  
\end{split}
\end{equation}
where $\tau(u(x,t))$ is the \textit{tension field} of $u_t(x):=u(x,t)$ (see \S \ref{tf} for the definition); this can be thought of as the gradient flow for the energy functional. 
Indeed, Eells-Sampson showed that for compact Riemannian manifolds and when the target is non-positively curved, the solution $u(x,t)$  of the above equation exists for all time, and converges to a harmonic map as $t\rightarrow\infty$. 

This method has also been studied in the context of non-compact Riemannian manifolds, notably by Li-Tam \cite{LiTam}, Jiaping Wang \cite{JiapingWang},  and Meng Wang \cite{MengWang},  amongst others. We shall use their results on the long-time existence of the harmonic map heat flow.  However the  problem of whether the solution \textit{converges} to a harmonic map is non-trivial; indeed, in \S2.1 we provide an example where it does not.

\vspace{.05in}

We shall in fact prove the following result, from which Theorem \ref{main0} is an immediate corollary:

\begin{thm}\label{main}
Given a twisted ideal polygon $P$ in $\HH^3$, there is a choice of an initial map $u_0:\C \to \HH^3$ such that the harmonic map heat flow \eqref{heat1} has a solution for all $t>0$. Moreover, for each $t\geq 0$, the map $u_t:\C \to \HH^3$ 
\begin{itemize}
    \item has a tension field with exponentially decaying norm (Lemmas \ref{intptf} and \ref{uboundtf}), 
    \item is asymptotic to $P$ (Corollary \ref{tend0}), 
    \item is trapped in a fixed neighborhood of the convex hull of the ideal vertices of $P$ (Lemma \ref{chs}), and 
    \item is a uniformly bounded distance from a pleated plane asymptotic to $P$ (Lemma \ref{Pb} and Corollary \ref{Psib}), where the pleated plane is defined in Definition \ref{mapP}.
\end{itemize}
Finally, as $t\to \infty$, the maps $u_t$ converge uniformly on compact sets to a harmonic map $u_\infty:\C \to \mathbb{H}^3$ with a polynomial Hopf differential, that is also asymptotic to $P$.
\end{thm}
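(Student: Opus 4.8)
The plan is to run the harmonic map heat flow \eqref{heat1} from an initial map $u_0$ modeled on the pleated plane of Definition \ref{mapP}, and to extract $u_\infty$ as a (sub)limit, using a priori bounds that prevent the flow from drifting off to infinity.

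\emph{Step 1: initial map and long-time existence.} To $P$ (with $n$ ideal vertices) I associate a polynomial quadratic differential $\phi=q(z)\,dz^2$ with $\deg q=n-2$, chosen so that the flat surface $(\C,|\phi|)$ has the combinatorics compatible with $P$ and the prescribed principal part, and I take $\Psi$ to be the associated pleated plane. I then build a smooth map $u_0:\C\to\HH^3$ that is a bounded distance from $\Psi$, asymptotic to $P$, and has tension field of exponentially decaying norm; the last property is arranged by matching $u_0$, in the ends of $(\C,|\phi|)$, to explicit \emph{harmonic} models obtained by composing the Han--Tam--Treibergs--Wan maps of Theorem \ref{httw} with the isometric embedding of the copy of $\HH^2$ spanned by the two edges of $P$ meeting at the corresponding vertex, and then interpolating on a compact core. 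Because $u_0$ has bounded (indeed exponentially decaying) tension field and, along the flow, the image stays in a fixed neighborhood of the convex hull (Step 2), the long-time existence results of Li--Tam, Jiaping Wang and Meng Wang (\cite{LiTam,JiapingWang,MengWang}) give a solution $u_t$ of \eqref{heat1} for all $t\ge0$.

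\emph{Step 2: a priori estimates along the flow.} Since $\HH^3$ is nonpositively curved, the Bochner formula gives $(\partial_t-\Delta)|\tau(u_t)|^2\le 0$, so $|\tau(u_t)|^2$ is a subsolution of the heat equation with exponentially decaying initial data; comparison with the Euclidean heat kernel then yields the spatial decay of $|\tau(u_t)|$ and shows $\|\tau(u_t)\|_\infty$ is non-increasing and tends to $0$ (Lemmas \ref{intptf}, \ref{uboundtf}). Next, let $\mathcal C$ be the fixed neighborhood of the convex hull of the ideal vertices of $P$ that contains the image of $u_0$; since $\mathrm{dist}(\cdot,\mathcal C)$ is convex on the Hadamard manifold $\HH^3$, the function $\mathrm{dist}(u_t,\mathcal C)$ is a subsolution of the heat equation vanishing at $t=0$, hence is identically $0$, i.e. $u_t$ has image in $\mathcal C$ for all $t$ (Lemma \ref{chs}). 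Integrating $\partial_t u=\tau(u_t)$ gives $\mathrm{dist}(u_t(x),u_0(x))\le\int_0^t|\tau(u_s)(x)|\,ds$, which tends to $0$ as $|x|\to\infty$; hence each $u_t$ is asymptotic to $P$ (Corollary \ref{tend0}). The uniform bound $\mathrm{dist}(u_t,\Psi)\le C$ (Lemma \ref{Pb}, Corollary \ref{Psib}) is obtained by combining $C^0$-control on compact sets with an a priori version, valid along the flow, of the asymptotic analysis of Proposition \ref{myprop}, showing $u_t$ stays near the ideal polygon near infinity.

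\emph{Step 3: convergence.} The bound $\mathrm{dist}(u_t,\Psi)\le C$ confines $u_t|_{B_R}$ to a fixed compact set for each $R$; with the parabolic Bochner inequality (the target curvature again has the favorable sign) this gives uniform-in-$t$ local bounds on the energy density, and then on all derivatives by interior parabolic estimates, so a subsequence $u_{t_k}$ converges in $C^\infty_{\mathrm{loc}}$ to a map $u_\infty:\C\to\HH^3$. To see $u_\infty$ is harmonic, observe that the relative energy $\int_\C\bigl(e(u_t)-e(u_0)\bigr)$ is finite (the two maps agree to high order at infinity by Step 2) and bounded below, and has $t$-derivative $-\|\tau(u_t)\|_{L^2(\C)}^2$; hence $\int_0^\infty\|\tau(u_t)\|_{L^2(\C)}^2\,dt<\infty$, so along a further subsequence $\tau(u_{t_k})\to0$ in $L^2_{\mathrm{loc}}$ and $\tau(u_\infty)=0$. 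The Hopf differential $\hp(u_\infty)$ is then holomorphic; being the $C^\infty_{\mathrm{loc}}$-limit of the $\hp(u_t)$, which obey a uniform bound $|\hp(u_t)|\le C|z|^{n-2}$ coming from the trapping and distance estimates, it is a polynomial of degree $\le n-2$, and it has degree exactly $n-2$ because $\mathrm{dist}(u_\infty,\Psi)\le C$ forces $u_\infty$ to have the same ideal accumulation set as $\Psi$, namely all $n$ vertices of $P$, so that $u_\infty$ is asymptotic to $P$ and Proposition \ref{myprop} applies. Finally, since $\hp(u_t)\to\hp(u_0)$ at infinity, every subsequential limit is a harmonic map with the same principal part of its Hopf differential and asymptotic to $P$; by the uniqueness statement they all coincide, so the full family $u_t$ converges uniformly on compact sets to $u_\infty$.

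\emph{Main obstacle.} The crux is the non-escape of the flow to infinity in the noncompact target, which is a genuine danger (the introduction's \S2.1 exhibits a divergent flow). Concretely the delicate estimate is the uniform bound $\mathrm{dist}(u_t,\Psi)\le C$ of Lemma \ref{Pb}: the naive triangle-inequality bound through $u_0$ only gives logarithmic growth in $t$, because $\C$ carries no complete conformal metric with a spectral gap (it is parabolic, and the relevant Green's function diverges), so one must compare $u_t$ directly with $P$ near infinity rather than with the reference map. A secondary difficulty is the construction of $u_0$ in Step 1 --- producing one smooth map that is simultaneously asymptotic to the \emph{twisted} (non-planar) polygon, a bounded distance from the pleated plane, and has tension field of exponentially decaying norm --- which is where the reduction to the two-dimensional Han--Tam--Treibergs--Wan picture in the ends is essential.
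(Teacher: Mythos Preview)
Your outline is close to the paper's structure, but there is a genuine gap at the crux you yourself flag: the uniform-in-$t$ bound $d(u_t,\Xi)\le C$. Your proposed mechanism --- ``$C^0$-control on compact sets combined with an a priori version, valid along the flow, of the asymptotic analysis of Proposition \ref{myprop}'' --- does not work. The $C^0$-control on compact sets is precisely what is in question (the convex-hull trapping of Lemma \ref{chs} only confines the image to a set with $n$ noncompact cusps), and the analysis of Proposition \ref{myprop} rests on the Hopf differential being holomorphic and on Minsky's structure equations for harmonic maps; none of this is available for the non-harmonic maps $u_t$ along the flow. Likewise, the decay $d(u_t(x),u_0(x))\to 0$ as $|x|\to\infty$ from Corollary \ref{tend0} is not uniform in $t$ (the constants in Lemma \ref{uboundtf} depend on $t$), so it does not give a uniform tube.

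The paper's device is different and is the missing idea: the pleated plane map $\Xi$ is \emph{piecewise harmonic} (Lemma \ref{Pharm}), so by the Schoen--Yau computation (Lemma \ref{sy}) the distance $\Psi(x,t)=d(u_t(x),\Xi(x))$ is a \emph{weak subsolution of the heat equation} on $\C\times[0,\infty)$. One then checks the integral growth hypothesis of the noncompact parabolic maximum principle (Proposition \ref{mpr}) using only the crude linear bound $\Psi(x,t)\le M+Dt$ coming from $\|\tau(u_t)\|_\infty\le C$, and concludes $\Psi(x,t)\le \sup_x\Psi(x,0)=M$ for all $t$ (Corollary \ref{Psib}). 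The initial bound $\Psi(\cdot,0)\le M$ (Lemma \ref{Pb}) is itself nontrivial and is obtained by comparing $h$ and an auxiliary collapsing map $g$ to a metric tree with the same principal part, not by any analysis along the flow. Once this uniform bound is in hand, convergence follows from one-point convergence plus the energy-density bound (Lemma \ref{edb}) and $\|\tau(u_t)\|_\infty\le St^{-1/2}$ (Lemma \ref{wuie}); your relative-energy argument in Step 3 is unnecessary and its finiteness claim is not justified.
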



\vspace{.05in}

Our construction of the initial map $u_0$ proceeds by modifying a harmonic map  $h$ to a planar polygon (in $\HH^2$), the existence of which follows from Theorem \ref{httw}. A key step to prove the convergence of the flow is to establish a uniform distance bound of $u_t$ from a \textit{pleated plane} map $\Xi:\mathbb{C} \to \HH^3$ that, although not $C^1$-smooth, is \textit{piecewise}-harmonic.


\vspace{.05in} 

In the final section, we characterize the non-uniqueness of the maps obtained in Theorem \ref{main0}. From our construction,  we observe that there are in fact \textit{infinitely many} harmonic maps asymptotic to the same twisted ideal polygon. We shall prove (Proposition \ref{prop:princ}) 
that there is a unique such harmonic map if we, in addition, prescribe the ``principal part" of its Hopf differential (see Definition \ref{defn:princ}).

\vspace{.05in}

In a forthcoming article \cite{GS2}, we address the question of existence (and uniqueness)  of \textit{equivariant} harmonic maps from $\HH^2$ to $\HH^3$ asymptotic to a given framing, where the equivariance is with respect to a  framed representation from a (punctured-) surface-group to $\pslc$. 
\vspace{.05in}

It would be interesting to extend Theorem \ref{main} to the case when $q$ in  the Hopf differential $q(z)dz^2$ is a more general entire function; in particular, one can ask:

\vspace{.05in}

\textbf{\textit{Question.}} Given a quasicircle $\Lambda \subset \partial_\infty \mathbb{H}^3$, does there exist a harmonic map $h:\C \to \HH^3$ that is asymptotic to $\Lambda$? 

\vspace{.05in}

(Note that for the special case that $\Lambda$ is a round circle, there is such a map, as shown in \cite{CollRos} where they disproved a conjecture of Schoen (\cite{Schoen}).)

\vspace{.1in}

The arguments in this paper also extend to the case of harmonic maps from the complex plane into hyperbolic $n$-space $\HH^n$ for $n>3$. It would also be interesting to explore analogues of these results when the target is replaced by other non-compact symmetric spaces, particularly those of higher rank; we hope to pursue that in future work. In the case that the target is the symmetric space of $\pslnc$ for $n>2$, the existence of such harmonic maps is already studied in the context of  ``wild non-abelian Hodge theory" (see, for example, \cite{BiqBoalch} and \cite{Mochi}, or the more recent \cite{LiMoch}), but we are not aware of a geometric study of their asymptotic behavior.


\vspace{.1in}

\subsection*{Acknowledgements.} Several parts of the work in this paper and its sequel \cite{GS2} are contained in the PhD thesis of GS \cite{thesisGobinda}, written under SG's supervision. Both authors would like to thank Qiongling Li for her help and advice. SG thanks Mike Wolf for numerous conversations about harmonic maps, and Yair Minsky for providing lasting inspiration. This work was supported by the Department of Science and Technology, Govt.of India grant no. CRG/2022/001822, and by the DST FIST program - 2021 [TPN - 700661].

\section{Preliminaries}\label{3}
In this section we recall some of the basic notions we shall need in the rest of the paper.
\subsection{Harmonic maps and the heat flow method}\label{tf}

We provide a general discussion in the context of maps between two Riemannian manifolds $(M,g)$ and $(N,h)$; a reference for this is \cite{Nishikawa02}. 
We shall subsequently specialize to the case when $M=\mathbb{C}$, equipped with a conformal metric that we shall describe, and $N = \HH^3$ equipped with the hyperbolic metric.

\begin{defn}[Harmonic map]\label{defn:harm} A $C^2$-smooth map $u:(M,g)\rightarrow (N,h)$ is called harmonic if it is a critical point  of the energy functional
\[E^U(u)=\frac{1}{2}\int_{U}||du||^2\]
on every relatively compact open subset $U$ of the domain. Here, the function in the integrand is the called the {energy density} of $u$, and is denoted by $e(u)$.
\end{defn}

An alternative definition of harmonicity is in terms of the tension field:

\begin{defn}[Tension field] The tension field of a map $u:(M,g)\rightarrow (N,h)$ is defined to be 
\[\tau(u)=\text{Tr}_g(\nabla du).\] In local coordinates, components of the tension field are given by\[\tau(u)^{\alpha}=\Delta_gu^{\alpha}+g^{ij}(x)\frac{\partial u^{\beta}}{\partial x^i}\frac{\partial u^{\gamma}}{\partial x^j}\Gamma'^{\alpha}_{\beta \gamma}(u(x)),\] where $\Delta_g$ is the Laplace-Beltrami operator on $(M,g)$ and $\Gamma'^{\alpha}_{\beta \gamma}$ are the Christoffel symbols for the metric $h$ on $N$.
\end{defn}

 \begin{lem}
     A map  $u:(M,g)\rightarrow (N,h)$ is harmonic if and only if its tension field $\tau(u) =0$.
 \end{lem}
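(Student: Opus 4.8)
The plan is to derive the first variation formula for the energy functional and then invoke the fundamental lemma of the calculus of variations. Given $u:(M,g)\to (N,h)$ and a relatively compact open set $U$, I would take a smooth one-parameter family $u_s:M\to N$ with $s\in(-\epsilon,\epsilon)$, $u_0=u$, and $u_s=u$ outside a fixed compact set $K\subset U$; these assemble into a single smooth map $F:M\times(-\epsilon,\epsilon)\to N$, $F(x,s)=u_s(x)$. Write $V:=\partial_s F\big|_{s=0}$, a section of $u^*TN$ supported in $K$. Working with the pulled-back Levi-Civita connection $\nabla$ on $F^*TN$ — which is metric-compatible and satisfies the torsion-free symmetry $\nabla_{\partial_s}dF(\partial_{x^i})=\nabla_{\partial_{x^i}}dF(\partial_s)$ — I would differentiate the energy density under the integral sign (legitimate since everything is smooth and compactly supported in $K$):
$$\frac{d}{ds}\,\tfrac12\|du_s\|^2 \;=\; g^{ij}\,h\!\left(\nabla_{\partial_s}dF(\partial_{x^i}),\,dF(\partial_{x^j})\right) \;=\; g^{ij}\,h\!\left(\nabla_{\partial_{x^i}}dF(\partial_s),\,dF(\partial_{x^j})\right).$$

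The next step is to recognize the right-hand side at $s=0$ as a divergence plus a curvature-free term. Letting $X$ be the vector field on $M$ metrically dual to the $1$-form $Y\mapsto h\big(V,\,du(Y)\big)$, one has the pointwise identity $g^{ij}h(\nabla_{\partial_{x^i}}V,\,du(\partial_{x^j})) = \operatorname{div}_g(X) - h\big(V,\,\operatorname{Tr}_g\nabla du\big) = \operatorname{div}_g(X) - h\big(V,\tau(u)\big)$, using the definition $\tau(u)=\operatorname{Tr}_g(\nabla du)$. Integrating over $U$ and applying the divergence theorem — valid because $V$, and hence $X$, is supported in the compact set $K\subset U$ — makes the divergence term drop out, yielding the first variation formula
$$\frac{d}{ds}\Big|_{s=0}E^U(u_s) \;=\; -\int_U h\big(\tau(u),\,V\big)\,dv_g.$$

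Finally I would conclude in both directions. If $\tau(u)\equiv 0$, the formula shows the first variation vanishes for every such family and every relatively compact $U$, so $u$ is harmonic. Conversely, if $u$ is a critical point, then $\int_U h(\tau(u),V)\,dv_g=0$ for every compactly supported section $V$ of $u^*TN$; taking $V=\varphi\cdot\tau(u)$ for a nonnegative bump function $\varphi$ supported near a hypothetical point where $\tau(u)\neq 0$ forces $\int_U \varphi\,|\tau(u)|^2\,dv_g=0$, a contradiction, so $\tau(u)\equiv 0$. I expect the only genuinely delicate points to be bookkeeping rather than substantial obstacles: correctly setting up the pullback connection and using the torsion-free symmetry (this is precisely where the Christoffel symbols $\Gamma'^{\alpha}_{\beta\gamma}$ of $h$ enter and reproduce the nonlinear term in the coordinate expression for $\tau(u)$ given above), and the standard fundamental-lemma step that upgrades "critical point on every relatively compact $U$" to the pointwise identity $\tau(u)=0$. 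For the noncompact case $M=\mathbb{C}$ of interest later, one should note that Definition \ref{defn:harm} already restricts testing to relatively compact $U$, so no decay hypotheses on $u$ are needed here.
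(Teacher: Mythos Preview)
Your proposal is correct and follows the same approach as the paper: both derive the first variation formula $\frac{d}{ds}\big|_{s=0}E^U(u_s)=-\int_U\langle V,\tau(u)\rangle\,dv_g$ and then invoke the arbitrariness of $V$. The paper simply quotes this formula as known, whereas you supply the computation (pullback connection, torsion-free symmetry, divergence theorem) and spell out the fundamental-lemma step; the substance is identical.
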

 \begin{proof}
      By the first variation  of the energy functional, for a $1$-parameter family of maps $u_t$ starting from $u_0 =u$ defined by a variation vector field $V$, we have 
  \[\frac{d}{dt}\biggr\rvert_{t=0}E^U(u_t)=-\int_{U}\langle V,\tau(u)\rangle dvol_g.\]
 where  $\tau(u)$ is the {tension field} of $u$.
 Since $V$ is arbitrary, the statement of the lemma follows. 
 \end{proof}

The harmonic map heat flow was already mentioned in \S 1 -- see \eqref{heat1} --  and can be thought of as the gradient flow for the energy functional. Indeed, in \cite{EellsSampson} Eells-Sampson showed that in the case that the manifolds $M,N$ are compact, the flow exists for all time and converges to a harmonic map.

In our case when the manifolds are non-compact, we shall use the following existence result of J. Wang in \cite[Theorem 3.1]{JiapingWang} :

 \begin{thm}[Long-time solution]\label{wang}
Let $M$ and $N$ be two complete Riemannian manifolds such that the sectional curvatures $K_N\leq0$. Let $u_0:M \rightarrow N $ be a $C^2$ map. If $$\left(\int_MH(x,y,t)|\tau(u_0)|^2(y)dy\right)^\frac{1}{2}=b(x,t)$$ is finite for all $(x,t)\in M\times(0,\infty)$ where $H(x,y,t)$ is the heat kernel of $M$, then $\eqref{heat1}$ has a long time solution $u(x,t)$ defined for all $(x,t)\in M\times(0,\infty)$, that satisfies the tension field bound $|\tau(u)(x,t)|\leq b(x,t)$. Moreover, if $N$ is simply-connected, and for any $T>0$, the integral  $\int_{0}^{T}\int_\mathbb{C} e^{-cr^2(x)}b^2(x,t)dxdt<\infty$ for some $c>0$, the solution is unique.  
\end{thm}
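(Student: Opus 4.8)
The plan is to obtain the long-time solution by the standard device of exhausting the non-compact domain $M$ by relatively compact smooth domains, solving a fixed-boundary heat-flow problem on each piece, and extracting a limit; the hypothesis $K_N\leq 0$ is precisely what makes the necessary a priori estimates available and prevents the flow from forming singularities. Concretely, write $M=\bigcup_{k\geq 1}\Omega_k$ with each $\Omega_k$ relatively compact, $\overline{\Omega_k}\subset\Omega_{k+1}$, and $\partial\Omega_k$ smooth. On each $\Omega_k$ solve the initial--boundary value problem $\partial_t u=\tau(u)$ on $\Omega_k\times(0,\infty)$, $u(\cdot,0)=u_0$, and $u=u_0$ on $\partial\Omega_k$ for all $t$. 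By the long-time existence theorem for the harmonic map heat flow on a compact manifold with boundary (due to Hamilton, and valid exactly because $K_N\leq 0$, which prevents the energy density from blowing up), each such problem has a smooth solution $u_k$ on $\Omega_k\times[0,\infty)$. It then remains to (i) estimate the $u_k$ uniformly in $k$ on compact subsets of $M\times[0,\infty)$, (ii) pass to a limit, and (iii) handle uniqueness.

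The key a priori estimate is the pointwise tension bound. Differentiating the flow equation, the section $\tau(u_k)=\partial_t u_k$ of $u_k^{*}TN$ satisfies a Jacobi-type equation along the flow, and taking squared norms yields the Bochner identity
\[
\left(\frac{\partial}{\partial t}-\Delta\right)|\tau(u_k)|^2 \;=\; -2\,|\nabla\tau(u_k)|^2 \;+\; 2\sum_i K_N\!\big(\tau(u_k),\,du_k(e_i)\big)\,\big|\tau(u_k)\wedge du_k(e_i)\big|^2 \;\leq\; 0,
\]
where $\{e_i\}$ is a local orthonormal frame on $M$ and both terms on the right are non-positive precisely because $K_N\leq 0$. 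On $\partial\Omega_k$ the boundary data is independent of $t$, so $\partial_t u_k=\tau(u_k)=0$ there, while at $t=0$ one has $\tau(u_k)=\tau(u_0)$. The parabolic maximum principle on $\Omega_k$ then dominates $|\tau(u_k)|^2(x,t)$ by the solution of the heat equation on $\Omega_k$ with zero Dirichlet data and initial data $|\tau(u_0)|^2$, namely $\int_{\Omega_k}H_{\Omega_k}(x,y,t)\,|\tau(u_0)|^2(y)\,dy$ with $H_{\Omega_k}$ the Dirichlet heat kernel of $\Omega_k$; since $H_{\Omega_k}\leq H$, the heat kernel of $M$, this gives the uniform bound $|\tau(u_k)(x,t)|\leq b(x,t)$. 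Additionally, with time-independent boundary values the energy $E^{\Omega_k}(u_k(\cdot,t))$ is non-increasing in $t$ (its $t$-derivative is $-\int_{\Omega_k}|\tau(u_k)|^2$), hence locally bounded by $E(u_0)$; composing $u_k$ with the convex function $\tfrac12 d_N^2(p_0,\cdot)$ for a fixed $p_0\in N$ produces a subsolution of the heat equation and hence a local $C^0$ bound; and standard parabolic regularity for the quasilinear flow --- there is no energy concentration because $K_N\leq 0$ --- upgrades all of this to $C^\infty_{loc}$ bounds on $M\times(0,\infty)$ that are uniform in $k$.

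With these uniform estimates, a diagonal argument over the exhaustion together with a compactness (Arzel\`a--Ascoli) argument extracts a subsequence of $\{u_k\}$ converging in $C^\infty_{loc}$ on $M\times(0,\infty)$ to a smooth map $u(x,t)$ solving \eqref{heat1}, with $u(\cdot,t)\to u_0$ uniformly on compact sets as $t\to 0^{+}$; passing the bound to the limit gives $|\tau(u)(x,t)|\leq b(x,t)$ for all $(x,t)\in M\times(0,\infty)$. Since each $u_k$ was defined for all $t>0$, so is $u$, establishing the long-time existence and the asserted tension bound.

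For uniqueness, assume in addition that $N$ is simply connected, so $(N,h)$ is a Hadamard manifold and $\tfrac12 d_N^2:N\times N\to\mathbb{R}$ is smooth off the diagonal and convex. Let $u,\tilde u$ be two solutions of \eqref{heat1} with the same initial value $u_0$, both satisfying the tension bound, and set $w(x,t)=\tfrac12 d_N^2\big(u(x,t),\tilde u(x,t)\big)$. Since $(u,\tilde u)$ solves the heat flow into the product $N\times N$ and $\tfrac12 d_N^2$ is convex there, one has $\big(\partial_t-\Delta\big)w=-\,\mathrm{Hess}\big(\tfrac12 d_N^2\big)\big((du,d\tilde u),(du,d\tilde u)\big)\leq 0$, with $w(\cdot,0)\equiv 0$. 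The tension bound controls the growth of $w$: since $d_N(u(x,t),u_0(x))\leq\int_0^t|\tau(u)(x,s)|\,ds\leq\int_0^t b(x,s)\,ds$ and likewise for $\tilde u$, one gets $w(x,t)\leq 2\big(\int_0^t b(x,s)\,ds\big)^2$, and the hypothesis $\int_0^T\!\int_M e^{-cr^2(x)}b^2(x,t)\,dx\,dt<\infty$ then places $w$ (after a Cauchy--Schwarz estimate) in a uniqueness class for non-negative subsolutions of the heat equation on the complete manifold $M$, of the type studied by Karp, Grigor'yan, and Li--Tam. Hence $w\equiv 0$, i.e.\ $u=\tilde u$. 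The genuinely routine input in all of this is the Bochner identity, a direct computation using $K_N\leq 0$; the real work --- and the main obstacle --- is everything forced by the non-compactness of $M$: justifying the maximum-principle / heat-kernel-domination step for the tension bound, obtaining parabolic estimates uniform over the exhaustion so that the limit exists and solves the flow, and identifying the correct growth class for the heat equation on $M$ that makes the uniqueness argument work with exactly the stated integral hypothesis.
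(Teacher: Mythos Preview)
The paper does not prove this theorem: it is quoted as \cite[Theorem 3.1]{JiapingWang} and used as a black box, so there is no ``paper's own proof'' to compare against. Your outline is a reasonable sketch of the standard argument (and essentially the strategy in Wang's paper): exhaust $M$ by compact domains, solve Hamilton's Dirichlet heat-flow problem on each, use the Bochner identity for $|\tau|^2$ together with $K_N\le 0$ to get a sub-heat-equation, dominate by the Dirichlet heat kernel and hence by the global $H$, and extract a limit; uniqueness then follows from convexity of $d_N^2$ and a non-compact parabolic maximum principle under the stated growth hypothesis. One small point: on $\partial\Omega_k$ the time-independent boundary data gives $\partial_t u_k=0$ there, which is what you need for the maximum principle, but $\tau(u_k)$ need not vanish on the boundary (indeed $\tau(u_0)$ generally does not); this is harmless since the comparison is with the Dirichlet heat semigroup applied to $|\tau(u_0)|^2$, but your phrasing ``$\partial_t u_k=\tau(u_k)=0$ there'' conflates the two.
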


The above result shall apply to our setting when $M = \C$ and $N = \HH^3$. We remark that even if a solution exists, it is not always true that the solution will converge to a limiting harmonic map (see the following example). This is in contrast to the case when the domain manifold $M$ has positive lower bound of the spectrum $\lambda(M)>0$ (see \cite[Theorem 5.2]{LiTam}). 

\subsubsection*{An example}\label{lateruse}
Here we give an example of an initial map $u_0 = u:\C \to \HH^3$ such that the harmonic map heat flow \eqref{heat1} does \textit{not} converge. 

Namely, let $u(x,y)=(x,y,t_0)$ for some $t_0>0$, where we have used the upper half-space model of hyperbolic $3$-space where $\HH^3 \cong \mathbb{C} \times \mathbb{R}^+$.

If we assume  that the solution of \eqref{heat1} is of the form $u(x,y,t) = (x,y, f(t))$, the harmonic map heat flow reduces to the ODE
\[\frac{d f}{d t}=\frac{2}{f}.\] Using the initial condition $f(0)=0$ we obtain $f(t)=\sqrt{4t+t_0^2}$ and consequently, $u(x,y,t)=(x,y,\sqrt{4t+t_0^2})$ is a solution to the harmonic map heat flow, with initial map $u$. Clearly, as $t\rightarrow\infty$, solution $u(x,t)$ does not converge. We remark that one can compute that in this case the tension field is uniformly bounded, so the hypotheses of Theorem \ref{wang} hold.

\subsection{Harmonic maps from $\mathbb{C}$ to $\mathbb{H}^2$}\label{prelim3}
In this subsection, we recall some of the previous work that relates the asymptotic behaviour of harmonic maps from $\mathbb{C}$ to $\mathbb{H}^2$, to the horizontal and vertical foliations of the Hopf differential. We shall use some of these estimates in our arguments. Throughout this subsection, the domain $\Sigma$ will be a (possibly non-compact) Riemann surface of finite type. 

\begin{defn}[Hopf differential] 
 For a $C^2$-smooth map $u:\Sigma\rightarrow (N,h)$, the \emph{Hopf differential} of $u$ denoted by $\hp(u)$ is \[\phi=(u^*h)^{2,0},\]
a quadratic differential on $\Sigma$ locally of the form $\phi(z)dz^2$. 
\end{defn}
 
 \noindent \textit{Remark.} It is well-known that if $u$ is harmonic, then  $\phi$ is a holomorphic quadratic differential on $\Sigma$ (\text{see for example} \cite[Theorem 10.1.1, p-577]{JostRG17}).   

\vspace{.05in} 

The following notion from \cite[Definition 2.5]{Gupta21} (see also \S 2.3 of \cite{GupWolf}) will be used at times in this paper, especially in the final subsection \S3.7:

\begin{defn}[Principal part]\label{defn:princ}
The principal part of a meromorphic quadratic differential $q$ at a pole is a meromorphic $1$-form $\omega$ defined in a neighborhood $U$ of the pole such that $\sqrt q - \omega$ is integrable on $U$. In local coordinates, if $U \cong \mathbb{D}^\ast$ and $q$ has a pole at $0$, then $\omega = z^{-n/2}P(z)dz$ where $P(z)$ is a certain polynomial of degree $\lfloor \frac{n-2}{2} \rfloor$ comprising terms in the Laurent expansion of $\sqrt q$ -- see equations (4) and (5) in \cite{GupWolf}.
\end{defn}

\noindent\textit{Remark/Notation.} In this paper, we shall consider holomorphic quadratic differentials on $\C$ arising as Hopf differentials of harmonic maps with domain $\C$; such a differential has a single pole at $\infty$, and we shall just refer to the principal part there as  the ``principal part of the Hopf differential".  

\vspace{.05in}

\begin{defn}[Horizontal and vertical foliations] Let $\phi$ be a holomorphic quadratic differential on $\Sigma$. Recall that each $p\in \Sigma$, $\phi$ defines a map $\phi_p:T_p\Sigma\rightarrow \mathbb{C}$ satisfying $\phi_p(\lambda v)=\lambda^2\phi_p(v)$ for any $v\in T\Sigma$ and  $\lambda\in \mathbb{C}$.
A tangent vector $v\in T_p\Sigma$ is called horizontal (respectively, vertical) for the quadratic differential $\phi$ if $\phi_p(v)>0$ (respectively, $\phi_p(v)<0$). The set of horizontal or vertical tangent vectors in $T\Sigma$ forms a smooth line field away from the set $F$ of zeros of $\phi$; this can be integrated to define the horizontal and vertical foliations of $\Sigma \setminus F$. At any point in $F$, these foliations have prong-type singularities (see Figure 1). 

\end{defn}


\noindent  \textit{Remark.} In the case that the domain is the complex plane $\C$, a \textit{polynomial quadratic differential} $q$ is of the form $q(z)dz^2$ where $q(z)$ is a polynomial of degree $n\geq1$. Such a holomorphic quadratic differential $q$ has a pole of order $n+4$ at infinity, and there are exactly $n+2$ horizontal (or vertical) directions asymptotic to infinity.

\begin{figure}\label{hf1}
\centering
\includegraphics[scale=.5]{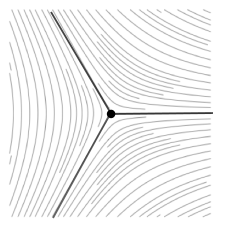} 
\caption{Horizontal foliation for $zdz^2$ on $\mathbb{C}$}
\label{hf}
\end{figure}

\begin{defn}[Quadratic differential metric]\label{defn:qmetric}
A holomorphic quadratic differential $q$ on a Riemann surface $\Sigma$ induces a conformal metric on $\Sigma$  (the $q$-metric) given by the local expression $|q(z)||dz|^2$, which is singular at the zeros of $q$. Since $q$ is holomorphic, the curvature vanishes away from these singularities; the metric is thus a singular flat metric on $\Sigma$ . 
\end{defn}

\noindent \textit{Remark.} For a polynomial Hopf differential on $\C$ as in the preceding remark, the induced singular flat metric has $(n+2)$ Euclidean half-planes isometrically embedded in a cyclic order around $\infty$. In fact, one can choose these half-planes to be \textit{horizontal}, i.e.\ foliated by horizontal lines, or \textit{vertical}, i.e.\ foliated by vertical lines. 

\vspace{.05in} 

From now on, let $h:\mathbb{C}\rightarrow\mathbb{H}^2$ be a harmonic diffeomorphism to its image with Hopf differential $\phi$. Let $\xi=x+iy$ be the canonical coordinates away from the zeros of $\phi$, where the Hopf differential has the form  $\phi = d\xi^2$. Then a short computation (see, for example \cite[Proposition A.2.1]{Huang16}) shows that:
\begin{equation}\label{hopf}
h^*(\rho)=(e+2)dx^2+(e-2)dy^2,
\end{equation} where $\rho$ is the metric on $\HH^2$ and $e$ is the energy density of $h$ with respect to the $\phi$-metric. 
\vspace{.05in}

The key analytical estimate is the following (see \cite[\S 5]{HanRemarks}, or \cite[Lemma 3.3]{Minskylengthenergy}): 

\begin{lem}\label{est0} 
In the setup above, the energy density satisfies the estimate $\lvert e (\xi) - 2\rvert = O(e^{-\alpha \lVert \xi \rVert})$ for some constant $\alpha>0$. 
\end{lem}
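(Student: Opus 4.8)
The plan is to reduce the estimate to the exponential decay of a solution of a sinh-Gordon equation, and then run a maximum-principle comparison against explicit barriers.

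First I would work in the canonical coordinate $\xi = x+iy$, in which $\phi = d\xi^2$ and the $\phi$-metric is the flat metric $|d\xi|^2$, so $|\phi|\equiv 1$. Writing $w = \log\|\partial h\|$ (computed in this coordinate), equation \eqref{vortex} becomes $\Delta_0 w = e^{2w}-e^{-2w}=2\sinh(2w)$, where $\Delta_0$ denotes the flat Laplacian; and since $\|\partial h\|\,\|\bar\partial h\| = |\phi| = 1$, the energy density with respect to the $\phi$-metric is $e = \|\partial h\|^2 + \|\bar\partial h\|^2 = e^{2w}+e^{-2w} = 2\cosh(2w)$, so that $e-2 = 4\sinh^2 w \ge 0$. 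Thus it suffices to show that $w(\xi) = O(e^{-\beta\|\xi\|})$ for some $\beta>0$, since then $e(\xi)-2 \le C w(\xi)^2$ once $w$ is bounded. We may assume $w\ge 0$: since $h$ is a local diffeomorphism its Jacobian $\|\partial h\|^2-\|\bar\partial h\|^2$ has a fixed sign, which together with $\|\partial h\|^2\|\bar\partial h\|^2 = 1$ forces $\|\partial h\|\ge 1$ everywhere (after replacing $w$ by $-w$ if necessary, which solves the same equation and leaves $e$ unchanged). I would also use that $e$, equivalently $w$, is bounded above by a constant $M$ — a standard part of the asymptotic analysis of such maps (see \cite{HTTW95}), which can alternatively be deduced from an interior gradient estimate for harmonic maps into a nonpositively curved target, using the flatness of the $\phi$-metric off the finitely many zeros of $\phi$.

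Now, since $\sinh(2w)\ge 2w$ for $w\ge 0$, the function $w$ satisfies $(\Delta_0-4)w = 2\sinh(2w)-4w \ge 0$ away from the zeros of $\phi$, i.e.\ $w$ is a nonnegative bounded subsolution of the operator $L := \Delta_0-4$. Fix a closed disk $K\subset\C$ containing all zeros of $\phi$ in its interior, and let $d = d_\phi(\xi,K)$. Since $\C$ with the $\phi$-metric is a complete CAT(0) space (all cone angles at the zeros of $\phi$ are $\ge 2\pi$) and $B_\phi(\xi,d)$ contains no zero of $\phi$, the ball $B_\phi(\xi,d)$ is isometric to a Euclidean disk of radius $d$. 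On this disk $w$ is a subsolution of $L$ with $0\le w\le M$, so comparison with the radial solution $\Psi(r) = M\,I_0(2r)/I_0(2d)$ of $L\Psi = 0$ (here $I_0$ is the modified Bessel function, and the maximum principle applies because the zeroth-order coefficient of $L$ has the favorable sign) gives
\[
w(\xi) \le \Psi(0) = \frac{M}{I_0(2d)} \le 3M\, e^{-d},
\]
using the elementary bound $I_0(2d) = \tfrac1\pi\int_0^\pi e^{2d\cos\theta}\,d\theta \ge \tfrac13 e^{d}$.

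Finally I would convert the $\phi$-distance into $\|\xi\|$: the $\phi$-metric on $\C$ has $n+2$ Euclidean half-plane ends on each of which $\xi$ is a genuine coordinate, and any path from a point $\xi$ of such an end to $K$ maps under $\xi$ to a path in $\C$ from $\xi$ to a fixed bounded set, hence has $\phi$-length at least $\|\xi\| - O(1)$; therefore $d_\phi(\xi,K)\ge c_1\|\xi\|-c_2$ for constants $c_1>0$, $c_2$. Combining with the previous paragraph, $w(\xi) = O(e^{-c_1\|\xi\|})$, and hence $e(\xi)-2 = 4\sinh^2 w(\xi) = O(e^{-\alpha\|\xi\|})$ with $\alpha = 2c_1$, which is the assertion (for $\xi$ in the bounded region where the above does not apply, the estimate holds trivially after enlarging the constant). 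The step I expect to be the main obstacle is exactly this passage to $\|\xi\|$ — one must ensure the comparison produces decay in \emph{every} direction $\|\xi\|\to\infty$, including along the directions that approach the ideal vertices, where $\|\xi\|$ is large but the $\phi$-distance to a neighboring edge is small; this is why the argument is organized around the $\phi$-distance to the fixed compact set $K$ and the half-plane structure of the $\phi$-metric near infinity. A secondary technical point is the a priori boundedness of the energy density, which I would import rather than reprove.
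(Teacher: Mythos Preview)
Your argument is correct and follows essentially the same route as the paper: reduce to the sinh-Gordon equation in the canonical coordinate, import the a priori bound $0\le w\le C$ (the paper cites \cite[Proposition~5.1]{HanRemarks}), and then run a maximum-principle comparison against an explicit supersolution of $\Delta-4$ on a flat disk of radius comparable to the $\phi$-distance to the zero set, finally converting that distance to $\|\xi\|$ using that the zeros are finite. The only difference is the choice of barrier: you use the radial Bessel solution $M\,I_0(2r)/I_0(2d)$, while the paper uses the separable function $F(x,y)=\dfrac{C}{\cosh(R/2\sqrt{2})}\cosh(2x)\cosh(2y)$ on a square; both give the same exponential decay, and your radial choice is arguably a bit more natural for a disk comparison.
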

\begin{proof}[Sketch of the proof]
Setting $w_1=w-\frac{1}{2}\ln(|\phi|)$ in \eqref{vortex} we obtain,
\begin{equation}\label{vort2}
\Delta w_1=e^{2w_1}-e^{-2w_1}	
\end{equation}
where $\Delta$ is the Laplacian in the canonical coordinates $\xi$. The energy density is then $e = 2 \cosh{2w_1}$. 
	
By \cite[Proposition 5.1]{HanRemarks}, for any $R>1$, there is an absolute constant $C>0$ such that we have the estimate $0\leq w_1(p)\leq C$ for any point $p$ whose distance from the zeros of $\phi$ in the metric induced by $\phi$ is at least $\frac{1}{2}R$.  This uniform bound can in fact be improved to a bound that decays exponentially in $R$: consider the comparison function\[F(x,y)=\frac{C}{\cosh (R/2\sqrt{2})}\cosh(2x)\cosh(2y).\] Then $\Delta F = 4F $ and  $\Delta(F-w_1)=(4F-2\sinh 2 w_1)\leq 4(F-w_1)$, and on the boundary of $B(z,\frac{1}{2}R)$ we have $ F\geq C\geq w_1$. Applying the maximum principle we obtain $F - w_1 \geq 0$, and thus 
\begin{equation}\label{eest}
w_1\leq F =  O(e^{-\beta R}),
\end{equation} for some $\beta>0$. 
Hence we obtain the estimate 
\begin{equation}\label{ede}
e(p)=2\cosh 2 w_1(p) =  2+O(e^{ -\alpha R(p)})
\end{equation}
where $R(p)$ is the distance of the point $p$ from the zeros of $\phi$.

Moreover, since the set of zeros is a finite set, $R(p)$ equals $\lVert p \rVert $ up to a finite additive constant. The desired estimate of the energy density follows. \end{proof}


\noindent  \textit{Remarks.} (i) From the previous lemma one can derive estimates of the asymptotic behavior of the harmonic map. In particular, from \eqref{hopf}  it is immediate that far from the zeros of $\phi$, the horizontal vector $\partial/\partial x$ maps via $h_\ast$ to approximately twice its length, while the image of a vertical vector $\partial/\partial y$ is approximately of zero length.

(ii) For convenience, we shall henceforth use a scaling of the singular-flat metric induced by  the Hopf differential $\phi$ on the domain, where the scaling is by a factor $2$. This will ensure that the map is an almost-isometry in the horizontal direction, far from the zeros of $\phi$. We shall also refer to this as the $4\phi$-metric since it is the metric induced by four times the Hopf differential. 

\smallskip

In fact, one can show that far from the zeros of the Hopf differential, the harmonic map is approximated well by a map that collapses (the vertical direction) to a geodesic line $L$. We state this more precisely in the following proposition, which is implicit in \cite[\S3]{HTTW95} -- see also \cite[Theorem 4.2]{Min92}. In the statement the collapsing map $\Pi_L: \mathbb{C} \to \HH^2$ is defined as $\Pi_L = \gamma \circ \pi$ where $\pi(x,y) = x$ and $\gamma:\mathbb{R} \to L$ is a parametrized geodesic. 



\begin{prop}\label{est} Let $h:\C \to \HH^2$ be a harmonic map with a polynomial Hopf differential $\phi$ of degree $n\geq 1$. Let $H$ be a  horizontal half-plane of the singular flat metric induced by $4\phi$. Then there is a geodesic line $L \subset \mathbb{H}^2$ such that the restriction $h\vert_H$ is asymptotic to $\Pi_L$ in the following sense: Assume that $H = \{(x,y)\ \vert\ y>0\}$ and, by a post-composing with an isometry of $\HH^2$, assume that $L$ is the vertical geodesic in the upper half-plane model of $\HH^2$ , such that $\gamma(x) =(0, e^{x})$. Then if $h\vert_H(x,y) = (f(x,y), g(x,y))$ then 
\begin{itemize}
    \item[(i)] $\lVert f (x,y) \rVert_{C^1} = O(e^{-\alpha \lVert (x,y)\rVert})$

\item[(ii)] $\lVert g (x,y) - e^{x} \rVert_{C^1} = O(e^{-\alpha \lVert (x,y)\rVert})$
    
\end{itemize}
for some $\alpha>0$, where $\lVert \cdot \rVert_{C^1}$ denotes the $C^1$-norm, and $\lVert (x,y) \rVert = \sqrt{x^2 + y^2}$.
\end{prop}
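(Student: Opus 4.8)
The plan is to push the energy-density estimate of Lemma~\ref{est0} through equation~\eqref{hopf}, and then use harmonicity to turn the resulting control of the pullback metric into geometric control of $h$. On $H$ there is a global coordinate $z=x+iy$ with $4\phi=dz^2$ (the half-plane is simply connected and contains no zero of $\phi$), and the $4\phi$-distance from a point of $H$ to the zeros of $\phi$ is comparable to $\lVert(x,y)\rVert$. Rewriting \eqref{hopf} in these scaled coordinates and applying Lemma~\ref{est0} gives, on $H$,
\[
h^\ast\rho=\cosh^2 w_1\,dx^2+\sinh^2 w_1\,dy^2,\qquad |w_1(x,y)|=O\big(e^{-\alpha\lVert(x,y)\rVert}\big),
\]
where $w_1$ satisfies $\Delta w_1=2\sinh 2w_1$. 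Interior elliptic estimates for this equation upgrade the bound to $|\nabla w_1|=O(e^{-\alpha\lVert\cdot\rVert})$ as well, so that $|h_x|_\rho=\cosh w_1=1+O(e^{-\alpha\lVert\cdot\rVert})$, $|h_y|_\rho=\sinh w_1=O(e^{-\alpha\lVert\cdot\rVert})$ and $\langle h_x,h_y\rangle_\rho\equiv 0$.

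Since $h$ is harmonic with respect to the flat metric on $H$, the pullback connection satisfies $\nabla_{\partial_x}h_x=-\nabla_{\partial_y}h_y$. Expanding $\nabla_{\partial_y}h_y$ in the orthogonal frame $\{h_x,h_y\}$ and using the symmetry $\nabla_{\partial_y}h_x=\nabla_{\partial_x}h_y$ one gets $\langle\nabla_{\partial_y}h_y,h_x\rangle_\rho=-\tfrac12\partial_x|h_y|_\rho^2$ and $\langle\nabla_{\partial_y}h_y,h_y\rangle_\rho=\tfrac12\partial_y|h_y|_\rho^2$; with the bounds above this yields $|\nabla_{\partial_y}h_y|_\rho=O(e^{-\alpha\lVert\cdot\rVert})$, and hence $|\nabla_{\partial_x}h_x|_\rho=O(e^{-\alpha\lVert\cdot\rVert})$ too. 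Thus for each fixed $y_0>0$ the curve $c_{y_0}(x):=h(x,y_0)$ has speed $1+O(e^{-\alpha\lVert(x,y_0)\rVert})$ and geodesic curvature $O(e^{-\alpha\lVert(x,y_0)\rVert})$; and integrating $|h_y|_\rho=\sinh w_1$ in $y$ shows that each vertical segment has finite length, that $\lim_{y\to\infty}h(x,y)$ exists, and that $d_\rho\big(h(x,y_0),h(x,y_1)\big)=O\big(e^{-\alpha\lVert(x,\min(y_0,y_1))\rVert}\big)$ — the vertical collapse.

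I would then invoke a standard ODE comparison in $\HH^2$: a bi-infinite curve of speed $1+O(e^{-\alpha|x|})$ with geodesic curvature $O(e^{-\alpha|x|})$ has unit tangent turning at an integrable rate, hence converges to two ideal endpoints; writing $L$ for the geodesic they span, the curve stays within $O(e^{-\alpha|x|})$ of $L$ and its arclength parametrization agrees with that of $L$ up to an $O(e^{-\alpha|x|})$ error and an overall additive constant. By the vertical-collapse estimate these endpoints, and hence $L$, are the same for every horizontal line. Post-composing $h$ with an isometry carrying $L$ to $\{p=0\}$ and translating along it to calibrate the $x\to+\infty$ ends against $\gamma(x)=(0,e^x)$, and writing $h|_H(x,y)=(f(x,y),g(x,y))$, one reads off $|f|+|g-e^x|=O(e^{-\alpha\lVert(x,y)\rVert})$: on the $x\to+\infty$ end by the calibration; on the $x\to-\infty$ end because there $\gamma(x)\to(0,0)$ while $|f|,|g|=O(e^{x})$; and for bounded $x$ by the vertical collapse. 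The $C^1$-estimate follows from the derivative bounds of the earlier steps, the derivatives being measured in the target metric (equivalently, up to the conformal factor $g^{\pm2}=e^{\pm2x}$ of the half-plane model).

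The step I expect to be the main obstacle is this last one: passing from the infinitesimal data — exponentially small geodesic curvature, speed defect, and vertical collapse — to the global statement that $h|_H$ lies within exponentially small distance of a single, suitably parametrized geodesic. The delicate points are that the curvature decay must be governed by $\lVert(x,y)\rVert$ itself, not merely by distance to a moving reference point, so that its integral along a horizontal line converges and leaves only an exponentially small residual drift; that the limiting endpoints must be shown to be the same for all horizontal lines, which is precisely what the vertical-collapse estimate provides; and that one must track the exponential rates carefully — in particular the interplay of the decay of $w_1$ with the conformal distortion $e^{\pm2x}$ near the ideal endpoint — in order to land the asserted bound.
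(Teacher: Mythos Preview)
Your proposal is correct and follows essentially the same route as the paper: exponential decay of $w_1$ and its gradient from Lemma~\ref{est0}, exponentially small geodesic curvature of the images of horizontal lines, a Canoeing-type comparison pinning each such image near a geodesic $L$, and the vertical-collapse estimate to show $L$ is independent of height. The only cosmetic differences are that the paper obtains the curvature bound by quoting the explicit formula \cite[Lemma~3.2]{HTTW95} rather than your computation with the pullback connection, and dispatches what you flag as the ``main obstacle'' by citing the Canoeing Lemma (\cite[Lemma~3.1]{HTTW95}, \cite[Theorem~2.3.13]{Hubbard06}) together with \cite[Lemmas~3.3,~3.4]{HTTW95}.
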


\begin{proof}
We begin by observing that from \eqref{hopf} and Lemma \ref{est0} we obtain length estimates of the images of horizontal  and vertical arcs (denoted by $\gamma_h$ and $\gamma_v$ respectively) in the domain $\C$. Namely, we have:
\begin{equation}\label{inside1}
l(h(\gamma_h))=\int_{0}^{L}\sqrt{\frac{1}{4}(e+2)}dx=L+O(e^{-\alpha R})
\end{equation}
\begin{equation}\label{inside2}
l(h(\gamma_v))=\int_{0}^{L}\sqrt{\frac{1}{4}(e-2)}dy=O(Le^{-\alpha R})
\end{equation}
where there is an additional factor $1/4$ in the integrand when compared with the natural coordinates of $\phi$ (see \eqref{hopf}) because we are considering the $4\phi$-metric -- \textit{c.f.}  Remark (ii) above. 

It turns out (see \cite[Lemma 3.2]{HTTW95}) that the geodesic curvature of $h(\gamma_h)$ is given by \[k(h(\gamma_h)) = -\frac{1}{4}
(e-2)^{1/2}(e+2)^{-1}\frac{\partial e}{\partial y}\]
where there is an extra factor of $1/2$ since the natural coordinates of the $4\phi$-metric (that we continue to denote by $x$ and $y$) differs from that of the $\phi$-metric by a scaling by $2$ in each coordinate.

Using  Lemma \ref{est0} and the gradient estimate for solutions of the elliptic equation \eqref{vort2}, we then conclude that 
\begin{equation}\label{gc}
k(h(\gamma_h))=O(e^{ -\alpha R}).
\end{equation}
where $R$ is the distance of $\gamma_h$ from the origin in the $4\phi$-metric. 

It is a consequence of \cite[Lemma 3.1]{HTTW95} that an arc in $\HH^2$ with geodesic curvature less than $1$ is uniformly close to a geodesic (where the distance bound does not depend on the length of the curve.) This is also known as the Canoeing Lemma  in the hyperbolic plane (see \cite[Theorem 2.3.13]{Hubbard06}). In particular, if $\gamma_R$ is a bi-infinite horizontal line at height $R$ in the horizontal half-plane $H$, then its image under $h$ is uniformly close to a geodesic line $L$ in $\HH^2$. Note that this  geodesic line $L$ is  independent of the height $R$, since applying  \eqref{inside2} one can show that the images of  two horizontal lines at two different heights  are asymptotic to each other  (at both ends). (See \cite[Lemmas 3.3, 3.4]{HTTW95} for details.) Indeed, if $\{\gamma_R(t)\}_{t\in \mathbb{R}}$ is the parametrized horizontal line at height $R$, we have that the distance
\begin{equation}\label{hdistance}
\text{dist}(h(\gamma_R(t)),L) =  O\left(e^{-\alpha \lVert \gamma_R(t)\rVert } \right)
\end{equation} 
for some $\alpha>0$.

Recall that we can assume that, by post-composing with an isometry of $\HH^2$, that $\gamma(t)=(0,e^t)$. Thus, using the expression for $h\vert_H$ that we assumed in the statement of the Proposition, we have, \begin{equation}\label{nearer} 
d_{\mathbb{H}^2}((f(x,y) ,g(x,y)),(0,e^{x}))\leq ce^{-\alpha \lVert (x,y) \rVert}
\end{equation}
from which the $C^0$-estimates in (i) and (ii) follow. 

To complete the proof, we only need to establish exponentially decaying bounds for $\left|\left|\left|h_i{_*}\frac{\partial}{\partial y}\right|\right|-\left|\left|(\Pi{_L})_\ast\frac{\partial}{\partial y}\right|\right|\right|$ and $\left|\left|\left|h_i{_*}\frac{\partial}{\partial x}\right|\right|-\left|\left|(\Pi{_L})_\ast\frac{\partial}{\partial x}\right|\right|\right|$, where recall $\Pi_L(x,y) = (0, e^{x})$ is the collapsing map from $H$ to $L$, and thus have derivatives satisfying \[\left|\left|(\Pi{_L})_\ast\frac{\partial}{\partial x}\right|\right|=1 \quad \text{and} \quad\left|\left|(\Pi{_L})_\ast\frac{\partial}{\partial y}\right|\right|=0.\]
These bounds are an immediate consequence of \eqref{hopf} and Lemma \ref{est0}, after, once again, correcting for the scaling for the coordinates of the $4\phi$-metric, compared to those of the $\phi$-metric (\textit{c.f.} Remark (ii) following Lemma \ref{est0}). 
\end{proof}

\textit{Remark.} One can derive from the above estimates that the image of a harmonic map $h:\C \to \HH^2$ with a polynomial Hopf differential $\phi$ of degree $m\geq 1$ is an ideal polygon with $(m+2)$ geodesic sides, where each side corresponds to a horizontal half-plane of the $\phi$-metric. A similar argument applies in the case that the target is $\HH^3$, as we shall see in the next subsection, so we shall refer to that for details.

\subsection{Harmonic maps from $\C$ to $\HH^3$}

One way to obtain a harmonic map $h:\C \to \HH^3$ is to post-compose  a harmonic map from $\C$ to $\HH^2$ with an isometric embedding of $\HH^2$ in $\HH^3$. In that case the image is asymptotic to an ideal polygon that is contained in a totally-geodesic copy of $\HH^2$ in $\HH^3$ (see the Remark following Proposition \ref{est}).  In this subsection, we define the notion of a \textit{twisted} ideal polygon (that may not be contained in a totally geodesic plane), and show that, in general, the image of a harmonic map from $\C$ to $\HH^3$ with polynomial Hopf differential is asymptotic to such a twisted polygon.

\begin{defn}[Twisted ideal polygon]\label{defn:tip} Let $\{\xi_1,\xi_2,\ldots, \xi_n\}$ be $n$ points in the ideal boundary $\partial_\infty \HH^3 = \cp$, satisfying (i) there are at least three distinct points, and (ii) successive points $\xi_i$ and $\xi_{i+1}$ are distinct, for each $i$. Then a  twisted ideal $n$-gon in $\HH^3$ is a cyclically ordered set of bi-infinite geodesic lines $\{\gamma_1,\gamma_2,\ldots\gamma_n\}$ in $\HH^3$ such that $\gamma_i$ is between $\xi_i$ and $\xi_{i+1}$.
\end{defn}

\noindent \textit{Remark.} The conditions (i) and (ii) ensure that the twisted ideal polygon is ``non-degenerate".

\medskip

To prove the main result of this subsection, we shall need the following fact concerning curves with small geodesic curvature in $\HH^3$, that generalizes \cite[Lemma 3.1]{HTTW95} which concerned curves in a Hadamard surface. (See also  \cite[Theorem 2.3.13]{Hubbard06} for curves in $\HH^2$, where this is called a ``canoeing theorem".)  This is already known for curves in $\HH^n$ - see, for example, \cite[Lemma 2.5]{Leininger}. However, we provide a proof for the sake of completeness, that closely follows that of \cite[Lemma 3.1]{HTTW95}.

\begin{lem}$(\text{Canoeing lemma in}\hspace{1 mm} \mathbb{H}^3)$\label{canoe}
Let $\gamma:\mathbb{R}\rightarrow\mathbb{H}^3$ be a $C^2$-smooth curve joining two points $\xi_1, \xi_2 \in \partial_\infty \mathbb{H}^3$ with geodesic curvature $k_{\gamma}$ bounded above by $\epsilon$ where $\epsilon<1$.  Let $\sigma$ be the bi-infinite geodesic joining $\xi_1$ and $\xi_2$. Then $d_{\mathbb{H}^3}(x,\sigma)\leq C\epsilon$ for some $C>0$ independent of $\gamma$  and for all $x$ on $\gamma$.
\end{lem}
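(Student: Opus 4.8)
The plan is to follow the classical canoeing argument of \cite[Lemma 3.1]{HTTW95}, adapted to three dimensions. The key observation is that the geodesic curvature bound controls how fast the unit tangent vector of $\gamma$ turns, so over a bounded arc of $\gamma$ the tangent direction stays within a small cone, and $\gamma$ stays close to an honest geodesic on that arc. Then one chains together such local comparisons and passes to the endpoints at infinity.

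Concretely, first I would parametrize $\gamma$ by arclength, so that $\gamma'$ is a unit vector field along $\gamma$ and the geodesic curvature is $k_\gamma(s) = \lVert \nabla_{\gamma'} \gamma' \rVert \le \epsilon$. Fix a basepoint $x_0 = \gamma(0)$ and let $\sigma_0$ be the complete geodesic through $x_0$ with $\sigma_0'(0) = \gamma'(0)$. The first step is a local estimate: for $\lvert s \rvert \le T$ (with $T$ a fixed constant to be chosen, say $T=1$), the point $\gamma(s)$ satisfies $d_{\HH^3}(\gamma(s), \sigma_0) \le C_1 \epsilon$. This is proved exactly as in the $\HH^2$ case — compare $\gamma$ with $\sigma_0$ by writing the second-order ODE that the distance-to-$\sigma_0$ function satisfies along $\gamma$, using that the curvature of $\HH^3$ is $-1$ and that the forcing term is bounded by $\epsilon$; since $d$ and its derivative vanish at $s=0$, Gronwall-type control gives the linear-in-$\epsilon$ bound on a fixed-length interval. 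The point is that this argument is purely two-dimensional in flavor: it only involves the geodesic $\sigma_0$ and the curve $\gamma$, which together span at most a totally geodesic $\HH^2$ worth of comparison at each instant, so nothing about $\dim = 3$ obstructs it.

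Next, I would iterate. Having shown $\gamma$ stays within $C_1\epsilon$ of $\sigma_0$ on $[-T,T]$, I note that at $s = T$ the tangent vector $\gamma'(T)$ makes an angle $O(\epsilon)$ with the direction of $\sigma_0$ (again from the curvature bound integrated over a bounded interval). Restarting the comparison at $\gamma(T)$ with the geodesic $\sigma_1$ tangent to $\gamma$ there, and using that $\sigma_0$ and $\sigma_1$ are $O(\epsilon)$-close geodesics that remain so over bounded intervals (two geodesics starting $O(\epsilon)$-close in position and direction stay $O(\epsilon)$-close for a definite amount of time), one shows by induction that the whole forward ray $\gamma|_{[0,\infty)}$ stays within $C\epsilon$ of a single geodesic ray — here one uses that a curve which everywhere stays within a small tube of a sequence of geodesics that fit together with $O(\epsilon)$ mismatches actually fellow-travels one geodesic ray, because in $\HH^3$ a quasigeodesic with small enough multiplicative and additive constants is globally close to a genuine geodesic (Morse lemma / stability of quasigeodesics). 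In particular $\gamma|_{[0,\infty)}$ converges to a well-defined ideal point, which must be $\xi_2$ (resp. $\xi_1$ for $s\to-\infty$), since $\gamma$ is assumed to limit to those points. Finally, the two geodesic rays obtained from the two ends fit together: their basepoint tangent directions agree up to $O(\epsilon)$ and they share, up to $O(\epsilon)$, the same pair of endpoints at infinity, so the bi-infinite geodesic $\sigma$ joining $\xi_1$ and $\xi_2$ is within $C\epsilon$ of $\gamma$ uniformly, giving $d_{\HH^3}(x,\sigma) \le C\epsilon$ for all $x \in \gamma$.

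I expect the main obstacle to be the bookkeeping in the iteration step: making precise that the accumulated errors from restarting the comparison at $\gamma(T), \gamma(2T), \dots$ do not grow with the number of steps but stay uniformly $O(\epsilon)$. This is where one genuinely needs the negative curvature — the exponential divergence of geodesics means that naive chaining would give errors blowing up like $e^{cn}$, so instead one must invoke the stability of quasigeodesics in $\HH^3$ (equivalently, that $\HH^3$ is Gromov-hyperbolic) to convert the collection of local bounds directly into a global fellow-traveling statement, rather than summing local errors. Once that is set up, the passage to the ideal endpoints and the conclusion $d_{\HH^3}(x,\sigma)\le C\epsilon$ are routine. The rest — the local ODE comparison on a fixed interval — is essentially a verbatim transcription of \cite[Lemma 3.1]{HTTW95}, since that argument never used that the ambient space was $2$-dimensional, only that it had curvature $-1$.
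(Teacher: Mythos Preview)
Your approach is different from the paper's, and it has a genuine gap in the quantitative conclusion. The paper (closely following \cite[Lemma 3.1]{HTTW95}) does not chain local comparisons at all: it works directly in Fermi coordinates $(u,v,\phi)$ about the geodesic $\sigma$, in which the metric is $\cosh^2(v)\,du^2 + dv^2 + \sinh^2(v)\,d\phi^2$ and $v$ is exactly the distance to $\sigma$. Writing out $k_\gamma$ in these coordinates and evaluating at an interior maximum of $v$ along $\gamma$ (where $\dot v = 0$, $\ddot v \le 0$), the bound $k_\gamma \le \epsilon$ together with the unit-speed condition yields $\tanh(v_{\max}) < \epsilon$, hence $v_{\max} \le C\epsilon$. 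This is a one-step maximum-principle argument with no iteration and no appeal to quasigeodesic stability. Your description of \cite[Lemma 3.1]{HTTW95} as a ``local ODE comparison on a fixed interval'' to be chained is not what that proof does; it is the same Fermi-coordinate computation in one fewer dimension.

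The gap in your route is the step from ``$(1+O(\epsilon),O(\epsilon))$-quasigeodesic'' to ``within $C\epsilon$ of $\sigma$''. The Morse lemma in a $\delta$-hyperbolic space produces a fellow-traveling constant $R(K,C,\delta)$ that depends on $\delta$ and does \emph{not} tend to zero as $K\to 1$, $C\to 0$; it only gives $R = O(1)$. So your argument as written proves $\gamma$ stays within a uniformly bounded distance of $\sigma$, but not within $C\epsilon$ --- which is precisely what is needed later (e.g.\ in \eqref{hdistance} and the proof of Proposition~\ref{myprop}) to get exponential decay of the distance to the limiting geodesic side. To recover the linear dependence on $\epsilon$ you would need a sharp quantitative statement for broken geodesics with small exterior angles in $\HH^3$, and the cleanest route to that is exactly the Fermi-coordinate computation the paper carries out.
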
 
\begin{proof}
Let $\gamma:[0, \ell] \rightarrow \mathbb{H}^{3}$ be a $C^{2}$ curve parametrized by arclength. Let $\gamma^{*}$ be the complete geodesic through $\gamma(0)$ and $\gamma(\ell)$. Without loss of generality assume that $\gamma^{*}$ is the vertical geodesic passing through $(0,0,1)$. Let $(u, v, \varphi)$ be the Fermi coordinates such that $v=0$ is the geodesic $\gamma^{*}$ and $v$ is the geodesic for the point $(u, v, \varphi)$ to $\gamma^{*}$ (taking $(0,0,1)$ as base point). Put $u=\frac{1}{2}\ln(x^2+y^2+z^2), v=\ln\frac{\sqrt{x^2+y^2+z^2}+\sqrt{x^2+y^2}}{z},\phi=\arctan(\frac{y}{x})$. In these coordinates, the metric of  $\mathbb{H}^3$ is given by \[ds^{2} = \cosh^2(v) du^{2} + dv^2+\sinh^2(v) d\phi^2.\]Let $k_{\gamma}(t)$ be the geodesic curvature of $\gamma(t)=(u(t), v(t), \varphi(t))$. From the above computations, we have,
\[
k_{\gamma}^{2}(t)=\left[\cosh^2(v)\left(\ddot{u}+\Gamma_{12}^{1} \dot{u} \dot{v}\right)^{2}+\left(\ddot{v}+\Gamma_{11}^{2} \dot{u}^{2}+\Gamma_{33}^{2} \dot{\varphi}^{2}\right)^2+\sinh^2(v)\left(\ddot{\varphi}+\Gamma_{23}^{3} \dot{v} \dot{\varphi}\right)^{2}\right]
\]
Suppose the maximum $v_{\max }$ of $v$ is attained at $t=0$ or $t=l$, then we have $v_{\max }=0$. Otherwise at some interior point $0<t_{0}<l$ where $v$ attains its maximum, $\dot{v}=0$ and $\ddot{v} \leq 0$. Let $(u(t_0), v(t_0),\phi(t_0))=(u_0, v_0,\phi_0)$. Here $v_{0}=v_{\max } \geq 0$. Since $k_{\gamma}^{2} \leq\epsilon^{2}$, at $\left(u_{0}, v_{0}\right)$,\[\left(\ddot{v}-\sinh(v) \cosh(v)\dot{u}^2-\sinh(v) \cosh(v) \dot{\varphi}^{2}\right)^{2}<\epsilon^2\] which gives $\sinh(v)\cosh(v) \left(\dot{u}^2+\dot{\varphi}^{2}\right)<\epsilon$ at $\left(u_{0}, v_{0}\right)$.
	
Since $\gamma$ is arc-length parametrized, $\left|\gamma^{\prime}\right|=1$ which gives $\cosh ^2(v) \dot{u}^{2}+\dot{v}^{2}+\sinh ^2(v) \dot{\varphi}^{2}=1$, hence at $(u_0,v_0,\phi_0)$, we have \[\frac{\sinh(v) \cosh(v)\left(\dot{u}^2+\dot{\varphi}^{2}\right)}{\cosh^2(v)\dot{u}^2+\sinh^2(v)\dot{\phi}^2}<\epsilon.\]
Now if one  put $b=\cosh(v)$ and $a=\sinh(v)$, then using the fact $b^2-a^2=1$, we have the inequality \[\frac{a}{b}\leq\frac{a b\left(\dot{u}^{2}+\dot{\varphi}^{2}\right)}{b^{2} \dot{u}^{2}+a^{2} \dot{\varphi}^{2}},\] and consequently, $\tanh(v_0)<\epsilon$. Since $\epsilon<1 $, we conclude that $v_{max} = v_0 < C\epsilon$ for some constant $C>0$. 
\end{proof}

\medskip 

We now prove the main result of this subsection: 

\begin{prop}\label{myprop}
Let $h$ be a harmonic map from $\mathbb{C}$ to $\mathbb{H}^3$ with polynomial Hopf differential $q$ of degree $m\geq 1$. Then $h$ is asymptotic to a twisted ideal polygon $P$ with $m+2$ ideal vertices. Moreover, if $q$ has degree $m=0$, then the image of $h$ is a geodesic line.
\end{prop}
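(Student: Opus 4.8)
The plan is to analyze $h$ over the $m+2$ horizontal half-planes that make up the complement of a large compact set $K_0\subset\C$ in the singular flat metric induced by $q$ (we take $K_0$ to contain all zeros of $q$, and use the $4q$-metric of \S\ref{prelim3}). Thus $\C\setminus K_0$ is a cyclically-ordered union of Euclidean half-planes $H_1,\dots,H_{m+2}$ glued along horizontal boundary rays, and any divergent sequence in $\C$ eventually leaves $K_0$ and, after passing to a subsequence, either stays in one $H_j$ or escapes along a seam between consecutive half-planes; so it suffices to describe $h|_{H_j}$. I would show that on each $H_j$ the map $h$ collapses, up to an exponentially small error, onto a bi-infinite geodesic $L_j$ of $\HH^3$, and then check that the $L_j$ assemble into a twisted ideal polygon.

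The analytic heart is the $\HH^3$-analog of Lemma \ref{est0}. Although $h$ is not governed by the scalar equation \eqref{vortex}, one can still --- following \cite{Min92} and adapting the argument behind Lemma \ref{est0} --- use the Bochner formula for a harmonic map into a target of curvature $\le -1$ to show that, in the canonical coordinate $\xi=x+iy$ of $q$, the pullback metric has the form $(e+2)\,dx^2+(e-2)\,dy^2$ with $e=e(\xi)\ge 2$, where the function $w_1\ge0$ defined by $e=2\cosh 2w_1$ satisfies the elliptic differential inequality $\Delta w_1\ge 2\sinh 2w_1$. Combined with a uniform a priori bound for $w_1$ away from the zeros of $q$ (available in this setting via \cite{Min92}), the same comparison-function argument as in the sketch of Lemma \ref{est0} gives $|e(\xi)-2|=O(e^{-\alpha R(\xi)})$, where $R(\xi)$ is the $q$-distance from $\xi$ to the zeros. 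Exactly as in the proof of Proposition \ref{est}, this yields on each $H_j$ that (a) images of horizontal arcs are almost length-preserving while the image of a vertical arc of length $L$ has length $O(Le^{-\alpha R})$, and (b), using in addition the interior gradient estimate, the geodesic curvature of the $h$-image of a horizontal line at height $R$ in $H_j$ is $O(e^{-\alpha R})$.

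The remaining, geometric, part parallels \cite[\S3]{HTTW95}. By (b) and the Canoeing Lemma in $\HH^3$ (Lemma \ref{canoe}), the $h$-image of a height-$R$ horizontal line in $H_j$ lies within distance $O(e^{-\alpha R})$ of a bi-infinite geodesic, and --- applying Lemma \ref{canoe} to finite subarcs --- is a quasigeodesic, hence joins two distinct ideal points. By (a) the $h$-images of horizontal lines at different heights in $H_j$ are asymptotic at both ends, so these geodesics converge as $R\to\infty$ to a single geodesic line $L_j$, with $d_{\HH^3}(h(p),L_j)=O(e^{-\alpha\|p\|})$ for $p\in H_j$; and since the vertical derivative of $h$ decays exponentially, $h$ converges to a point of $L_j$ along any vertical ray in $H_j$ and to the two ideal endpoints of $L_j$ along the two ends of $H_j$. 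Thus $h|_{H_j}$ is asymptotic to $L_j$. Finally, consecutive half-planes $H_j,H_{j+1}$ share a seam along which $L_j$ and $L_{j+1}$ acquire a common ideal endpoint $\xi_{j+1}$; running over $j\in\mathbb{Z}/(m+2)\mathbb{Z}$ produces a cyclically-ordered set of $m+2$ ideal points $\xi_1,\dots,\xi_{m+2}$ together with the $m+2$ geodesics $L_j$ joining $\xi_j$ to $\xi_{j+1}$, i.e.\ a twisted ideal $(m+2)$-gon $P$, and $h$ is asymptotic to $P$. (Consecutive $\xi_j$ are distinct since each such $h$-image is a quasigeodesic; for $m=1$ this already forces three distinct vertices.)

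For $m=0$ there are no zeros of $q$, so the comparison argument of the analytic step applies on balls of every radius centered at an arbitrary point, forcing $w_1\equiv0$, i.e.\ $e\equiv2$; then the metric induced by $h$ is $4\,dx^2$, so $h_y\equiv0$, hence $h$ depends only on $x$ and, being harmonic, parametrizes a geodesic, so its image is a geodesic line. The step I expect to be the \textbf{main obstacle} is the analytic estimate --- extracting from the Bochner formula the correct differential inequality for the energy density of a harmonic map into $\HH^3$, and the a priori bound that feeds the maximum principle, so that the exponential decay of $e-2$ goes through as in the $\HH^2$ case; this is precisely the point at which the work of \cite{Min92} must be adapted. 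A secondary subtlety is verifying the non-degeneracy of the limiting polygon $P$.
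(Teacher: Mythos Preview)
Your overall route is the same as the paper's: invoke the Minsky/Bochner estimates to get exponential decay of $e-2$ in $q$-coordinates, deduce the length and geodesic-curvature bounds \eqref{inside1}--\eqref{gc}, apply the $\HH^3$ Canoeing Lemma (Lemma \ref{canoe}), and then copy the HTTW argument to produce the cyclic chain of geodesics $L_j$ with shared ideal endpoints. Two points deserve comment.

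\smallskip

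\textbf{A genuine gap: non-degeneracy for $m\ge 2$.} You verify condition (ii) of Definition \ref{defn:tip} (consecutive $\xi_j$ are distinct) but only note that condition (i) (at least three distinct ideal points) is automatic when $m=1$. For $m\ge 2$ nothing you have written rules out the possibility that the $\xi_j$ alternate between exactly two points $p,q$, so that every $L_j$ is the same geodesic. The paper handles this with an additional argument: if only two ideal points occurred, one exhausts $\C$ by $q$-polygons $G_k$ whose boundaries map arbitrarily close to the geodesic $\gamma=\overline{pq}$, and since $x\mapsto d(h(x),\gamma)$ is subharmonic one concludes $h(\C)\subset\gamma$; but then on each horizontal half-plane $h$ is asymptotic to the collapsing map $(x,y)\mapsto x$ onto $\gamma$, whose Hopf differential is the constant $\tfrac14\,dz^2$, forcing $q$ to be bounded on every half-plane and hence of degree $0$, contradicting $m\ge 1$. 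You flagged non-degeneracy as a ``secondary subtlety'', but it actually requires this extra idea. (Incidentally, the paper's $m=0$ statement is obtained as a by-product of this same subharmonicity argument; your direct argument via $w_1\equiv 0$ is a pleasant alternative, contingent on the a priori bound below.)

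\smallskip

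\textbf{A minor imprecision in the analytic step.} The inequality $\Delta w_1\ge 2\sinh 2w_1$ you assert would follow from the target curvature being $\le -1$ only if the relevant Bochner term saw the ambient curvature directly; in fact the identity one gets (as in \cite{Min92}) is $\Delta\mathcal{G}=-4K\sinh\mathcal{G}$ with $\mathcal{G}=2w_1$ and $K$ the \emph{induced} curvature of the immersed image surface, which Sampson's theorem only guarantees is $\le 0$. Minsky's exponential-decay argument (his Theorem 3.4) accommodates this, and you correctly defer to \cite{Min92}; just be aware that the clean inequality you wrote is not quite what is available.
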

\begin{proof}
In what follows we shall refer the reader to results in \cite{Min92}, which this proof crucially relies on. In the notation of that paper, we  define $\mathcal{G}$ by 
\begin{equation*}
    \sinh\mathcal{G} = \frac{\mathcal{J}}{2}
\end{equation*}
where $\mathcal{J}$ is the absolute value of the Jacobian of $h$ with respect to the $q$-metric. Note that the energy density $e(h) =2 \cosh\mathcal{G}$ (see equation (3.1) of \cite{Min92}, and compare with \eqref{ede}).

There is a version of \eqref{vortex} for the case when the target is a surface of negative sectional curvature $K\leq 0$, namely 
\begin{equation*}
    \Delta \mathcal{G} = -4 K \sinh \mathcal{G}
\end{equation*}
which is derived by a Bochner formula for $h$ (see \S1 of \cite{HTTW95}). 

As observed in \cite{Min92}, if $h$ is an immersion to $\HH^3$, i.e.\ away from the zeros of $\mathcal{G}$, the immersed surface in the image has negative sectional curvature by \cite[Theorem 8]{Sampson}, and the above equation holds. 
Moreover, far away from the zeros of the Hopf differential $q$, one also obtains an exponential decay of $\mathcal{G}$ (see \cite[Theorem 3.4]{Min92}) by an application of the maximum principle (\textit{c.f.} the sketch of the proof of Lemma \ref{est0}). In particular, we obtain the estimate \eqref{eest} even in this case.

Finally, we can show that the discussion in the proof of Proposition \ref{est} holds even in this case when the target is $\HH^3$. First, from the above discussion, the exponential decay implies that the estimates \eqref{inside1} and \eqref{inside2} hold (\textit{c.f.} equation (3.1) in \cite{Min92}). Second, Theorem 3.5 of \cite{Min92} shows that if one considers a  leaf of the horizontal foliation of  $q$ far from its zeros, the geodesic curvature of the image is small (i.e. tending to zero as the distance from the zeros increases). Finally, by Lemma \ref{canoe}, the image of such a horizontal leaf is close to a geodesic line, where the distance tends to zero the further away the horizontal leaf is from the zeros of $q$. 

Since  the horizontal foliation of $q$ comprises $(m+2)$ half-planes around $\infty$ (see the remark following Definition \ref{defn:qmetric}), we conclude that in each half-plane, the images of the horizontal lines under $h$ converge to a geodesic line in $\HH^3$.  This implies that $h$ is asymptotic to a cyclically ordered collection of geodesic lines $\gamma_1,\gamma_2,\ldots, \gamma_{m+2}$ in $\HH^3$. Moreover, a pair of horizontal lines in successive half-planes can be connected by a vertical line segment that has its length bounded by a constant, and is arbitrarily far from the zero set of $q$. By \eqref{inside2}, this implies that $l_i$ and $l_{i+1}$ have a common limiting point $\xi_i$ in the ideal boundary $\cp$. We also know that each successive points $\xi_i$ and $\xi_{i+1}$ are distinct, since they are ideal endpoints of a geodesic line in $\HH^3$. These are the same arguments as those in the proofs of Lemmas 3.3 and 3.4 of \cite{HTTW95}.

To complete the proof that this configuration of geodesic lines that $h$ is aymptotic to, is indeed a twisted ideal polygon, it remains to show property (i) in Definition \ref{defn:tip}, namely that there are at least three distinct points in the set $\{\xi_1,\xi_2,\ldots, \xi_{m+2}\}$. We shall assume not, and derive a contradiction. Suppose there are exactly two distinct points $p,q$ then by the preceding arguments $m$ is necessarily even, and $\xi_{2i-1} = p$ and $\xi_{2i} = q$ for each $1\leq i \leq \frac{m}{2}$. In that case, we can first show that the image of $h$ is \textit{exactly} the geodesic $\gamma$ between $p$ and $q$: arguing exactly as in \cite[Lemma 3.5]{HTTW95}, for any $z\in \C$  we can choose an exhaustion of $\C$ by polygons $\{G_k\} _{k\geq 1}$ each containing
$z$ and with a boundary comprising horizontal and vertical line segments, such that the distance $d(h(\partial G_k), \gamma) \to 0$ as $k\to \infty$. Since the distance function of $h(x)$ to $\gamma$ 
(as $x$ varies in $\C$) is subharmonic, we conclude that the distance of $h(z)$ to $\gamma$ must be zero, i.e. $h(z) \in \gamma$. Moreover, since $\gamma$ lies in a totally geodesic hyperbolic plane, we can apply Proposition \ref{est} to conclude that in each horizontal half-plane, the map $h$ approximates the collapsing map $(x,y) \mapsto x$ followed by an isometric embedding 
to $\gamma$. A calculation shows that the Hopf differential of this limiting map is the constant quadratic differential $\frac{1}{4} dz^2$, and therefore the Hopf differential of $q$ is bounded on each half-plane. The only such polynomial quadratic differential is the constant differential $cdz^2$ (for some constant $c$), which contradicts the assumption that $q$ is a degree-$m$ polynomial quadratic differential where $m\geq 1$. 

In the case that the polynomial Hopf differential has degree zero, i.e.\ the polynomial is a constant quadratic differential (with exactly two horizontal half-planes around $\infty$), the above argument shows that the image of $h$ is a geodesic line. 
\end{proof}

\noindent  \textit{Remark.} As mentioned in the proof above, the identity \eqref{hopf} and the exponential decay \eqref{eest} continue to hold in this case when the target is $\HH^3$ (see \cite[Equation (3.1 and Theorem 3.4)]{Min92}). We also have the analogue of \cite[Lemma 3.1]{HTTW95}, namely Lemma \ref{canoe}. Thus, we can establish the same statement as Proposition \ref{est}, but with $\HH^2$ replaced by $\HH^3$: the same proof carries through. In other words, in each horizontal half-plane of $q$, far from the zeros of $q$, the map $h$ is exponentially close to a collapsing map to the corresponding geodesic side of the twisted ideal polygon  $P$.

\section{Proof of Theorem \ref{main}}\label{sref}


As mentioned in \S1, the strategy of the proof is to construct a suitable initial $C^2$-smooth map $u_0:\C\to \HH^3$ asymptotic to the desired twisted ideal polygon (we do this in \S3.1), and then run the harmonic map heat flow \eqref{heat1}. The properties of the initial map together with Theorem \ref{wang} guarantee the long-time existence of the flow (\S3.2). In \S3.3, we show that each $u_t$ along the flow has the same tension-field decay and asymptotics as $u_0$. In \S3.4, we apply the maximum principle to show that the image of each $u_t$ is trapped in the convex hull of the vertices of the ideal polygon; this relies on the negative curvature of the target $\HH^3$ . In \S3.5, we first improve this by showing that in fact, the flow remains a uniformly bounded distance from the initial map, using a comparison map $P:\C \to \HH^3$ whose image is a \textit{pleated plane} asymptotic to the given twisted ideal polygon. Finally, we show that convergence indeed follows from these uniform estimates, and the limiting map has the desired asymptotics. 



\subsection{Construction of the initial map}\label{sstep2}
Let $P$ be the given  twisted ideal polygon in $\mathbb{H}^3$ with $n$ ideal vertices $\{\xi_1,\xi_2,\ldots,\xi_n\}$ in $\mathbb{CP}^1=\partial_{\infty}\mathbb{H}^3$ and $n$ geodesic sides $\{ \gamma_1, \gamma_2,\ldots, \gamma_n\}$ where $\gamma_i$ is a bi-infinite geodesic from $\xi_i$ to $\xi_{i+1}$ for each $i\in \{1,2,\ldots, n\}$, where the index set is cyclically ordered.

\subsubsection{Defining a planar polygon}
We first note that the twisted ideal polygon $P$ is obtained by bending an ideal polygon that is \textit{planar}, i.e.\ lies in a totally geodesic copy of the hyperbolic plane, which we denote by $H$, along (a subset of its) diagonals.

Here, a \textit{diagonal} of $P_0$ is a bi-infinite geodesic between two of its ideal vertices; it necessarily lies in the totally geodesic plane $H$. Also, a bending of $P_0$ along a diagonal $d$ is obtained by rotating the geodesic sides lying on one side of $d$ relative to those on the other side, where the rotation is an elliptic isometry of $\HH^3$ with axis $d$ (\textit{c.f.} Figure \ref{fig2}).

\begin{lem}[Theorem 5.1 of \cite{SGMahangrafting}]\label{bend} 
    There is a planar ideal polygon $P_0$ contained in a totally geodesic hyperbolic plane $H$ in $\HH^3$, such that $P$ is obtained by bending $P_0$ along a collection $\mathcal{C}$ of pairwise-disjoint diagonals of $P_0$.
\end{lem}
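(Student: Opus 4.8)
The plan is to reverse the bending procedure: starting from the twisted ideal polygon $P$, I will flatten it one diagonal at a time until it lies in a totally geodesic plane. First I would choose, among the geodesic sides $\gamma_1,\dots,\gamma_n$ of $P$, a maximal collection that is ``coplanar'' in the following sense. Pick the plane $H_1$ spanned by $\gamma_1$ (and, say, the vertex $\xi_3$, or any vertex not on $\gamma_1$ — nondegeneracy from Definition \ref{defn:tip} guarantees one exists). Moving around the cyclic order, as long as the next side $\gamma_{i+1}$ together with the already-placed sides spans a single plane, we leave it; the first time $\gamma_{i+1}$ fails to lie in the current plane, the shared endpoint $\xi_{i+1}$ of $\gamma_i$ and $\gamma_{i+1}$ is an ideal vertex, and the bi-infinite geodesic $d$ from $\xi_{i+1}$ back to some earlier vertex is a diagonal along which $P$ is ``bent'' relative to the plane. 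Applying the elliptic isometry of $\HH^3$ with axis $d$ that rotates the ``new'' side back into the current plane produces a twisted ideal polygon $P'$ with strictly fewer sides lying outside a common plane, and $P$ is recovered from $P'$ by bending along $d$.

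The key structural observation making this work is that the diagonals produced at successive steps are pairwise disjoint. This is because each diagonal $d$ separates the ideal vertices of the polygon into two cyclically consecutive arcs (it is a chord of the cyclically ordered vertex set $\{\xi_1,\dots,\xi_n\}$ in $\partial_\infty\HH^3$), and the bending at the next step only rearranges vertices strictly on one side of $d$ relative to a plane containing $d$ — so the next diagonal is a chord of one of those two arcs together with possibly an endpoint of $d$, hence has interior disjoint from $d$. Iterating, I obtain a collection $\mathcal{C}$ of pairwise-disjoint diagonals and a planar ideal polygon $P_0$ (the fully flattened configuration) such that composing the inverse elliptic rotations in reverse order exhibits $P$ as the bending of $P_0$ along $\mathcal{C}$. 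One should check that ``planar'' here really means an honest ideal polygon in a totally geodesic $H\cong\HH^2$: the flattened vertices remain cyclically ordered on $\partial_\infty H$ because each elliptic rotation about a diagonal preserves the cyclic order of the vertices it moves relative to the two on the diagonal.

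The main obstacle I anticipate is bookkeeping the combinatorics cleanly: ensuring that the flattening moves can be carried out in an order so that the diagonals stay disjoint, and that no degeneracy is introduced (two vertices colliding, or a side collapsing) when a rotation is applied — this uses condition (ii) of Definition \ref{defn:tip} and the fact that elliptic isometries fixing a geodesic act freely on the complement of its endpoints in $\partial_\infty\HH^3$. Since the statement is quoted as Theorem 5.1 of \cite{SGMahangrafting}, I would most likely present this as a brief sketch and refer there for the full combinatorial details, emphasizing only the two points used later: $\mathcal{C}$ consists of pairwise-disjoint diagonals, and $P_0$ is a genuine planar ideal $n$-gon whose totally geodesic plane is called $H$.
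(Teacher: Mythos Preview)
Your approach is workable but genuinely different from the paper's. The paper does not flatten $P$ iteratively; instead it fixes a maximal collection $\mathcal{C}$ of $(n-3)$ pairwise-disjoint diagonals (an ideal triangulation of an abstract $n$-gon), assigns to each diagonal the complex cross-ratio of the four ideal vertices of the two adjacent triangles in $P$, and then \emph{defines} $P_0$ to be the twisted ideal polygon whose cross-ratio parameters are the \emph{moduli} of these complex numbers. Real positive cross-ratios force $P_0$ to lie in a totally geodesic plane, and the argument of each cross-ratio is precisely the bending angle along the corresponding diagonal. This parameter-based description is what the paper actually uses later (the shear-bend interpretation appears in the construction of the pleated plane map), so it buys more than just the existence of $P_0$.

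Your iterative flattening has a couple of soft spots you correctly flag but should be aware are real: the phrase ``from $\xi_{i+1}$ back to some earlier vertex'' must be ``back to $\xi_1$'' (or more generally, the first vertex of the already-planar arc) for the resulting diagonals to be guaranteed pairwise disjoint; and at each step the elliptic rotation about the diagonal offers \emph{two} choices of landing point on $\partial_\infty H$, only one of which preserves the cyclic order --- you assert this is fine but do not justify the choice. Both issues evaporate if you simply run the fan triangulation from $\xi_1$ and always choose the landing point on the arc not containing $\xi_2,\dots,\xi_k$, but as written the argument is incomplete. Since the result is cited from \cite{SGMahangrafting} anyway, the cross-ratio description is both shorter to sketch and closer to what you need downstream.
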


\begin{proof}[Idea of the proof]
Since the statement is immediately implied by that of \cite[Theorem 5.1]{SGMahangrafting},  we sketch the idea of the proof, and refer to that paper for details. In fact, for any choice of a  maximal set of  pairwise-disjoint diagonals $\mathcal{C}$ in an abstract $n$-sided ideal polygon $\mathcal{P}$ , one can determine such a planar ideal polygon $P_0$. Such a collection $\mathcal{C}$ necessarily has $(n-3)$ elements, and determines an ideal triangulation of $\mathcal{P}$.  The given twisted ideal polygon $P$ can be thought of as a map from the abstract ideal polygon $\mathcal{P}$ to $\HH^3$. 

Each  diagonal belongs to two adjacent ideal triangles of $\mathcal{P}$, and the corresponding vertices of $P$ determine four points in $\cp =  \partial_\infty \HH^3$. Taking the complex cross ratio of these four points, we obtain $n-3$ complex numbers. Indeed, one can reverse this process, and uniquely determine a twisted ideal polygon in $\HH^3$ (upto postcomposition by $\pslc$) from a $(n-3)$-tuple of complex numbers.   

The planar ideal polygon $P_0$ is obtained when the parameters are the modulus of these complex numbers. This will lie on a totally geodesic  hyperbolic plane since these parameters (which are cross-ratios of two adjacent ideal triangles) are all real and positive. Geometrically, a complex cross-ratio $c = r\exp{i\theta}$  encodes the ``shear-bend" parameters between the two adjacent ideal triangles, and $\theta$ is the angle between the geodesic planes where they lie (see the discussion regarding grafting ideal quadrilaterals just before the proof of Theorem 5.1 in \cite{SGMahangrafting}). 
\end{proof}

\begin{figure}[t]
 \labellist
 \small\hair 2pt
\pinlabel $P_0$ at 40 130
\pinlabel $P$ at 440 135
 \endlabellist 
\centering
\includegraphics[scale=.6]{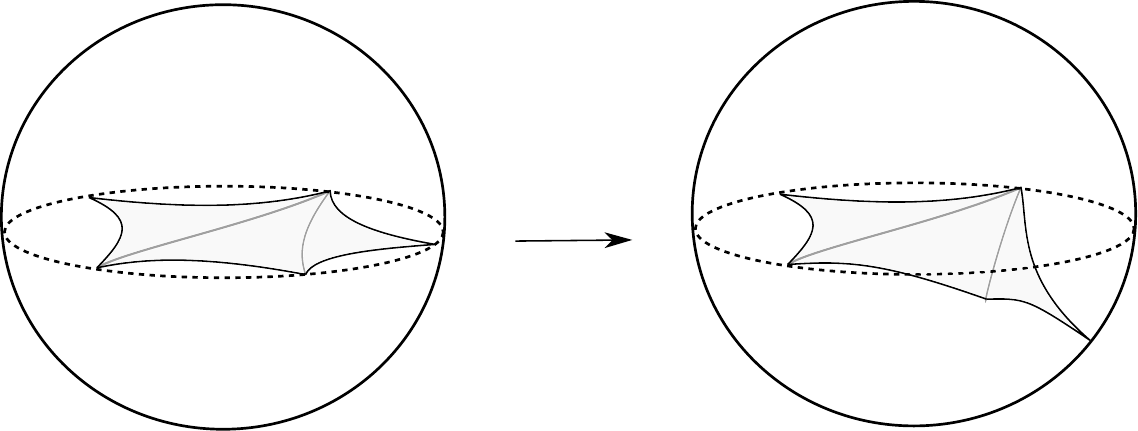}
\caption{The twisted ideal polygon $P$ is obtained by bending a planar ideal polygon $P_0$ along a collection of diagonals (see Lemma \ref{bend}). The shaded surface on the right obtained from the planar region bounded by $P_0$ after bending, is a pleated plane (\textit{c.f.} Definition \ref{mapP}).} 
\label{fig2}
\end{figure}

We shall now choose a harmonic map $h:\C \to H $ which is asymptotic to the ideal polygon $P_0$, such that the Hopf differential of $h$ is a polynomial quadratic differential. From the discussion in \S2.2, this polynomial differential is necessarily of the form $q(z)dz^2$ where $q(z)$ is a polynomial of degree  $(n-2)$. (Recall that $P_0$ is an ideal polygon with $n$ sides.) Such a harmonic  map exists by Theorem \ref{httw}, and it is easy to see (by comparing the dimensions of the space of such polynomial differentials on one hand, and the space of ideal $n$-gons on the other) that it is not unique. In fact, by \cite[Proposition 3.12]{Gupta21}, there is a unique  $h$ such that the Hopf differential has a prescribed {principal part} (as defined in Definition \ref{defn:princ}) -- we shall use this flexibility in Proposition \ref{prop:princ}. 


\smallskip 

In what follows, we shall modify this map $h$ to obtain our initial map $u_0$.

\subsubsection{Decomposing the domain}

Let $q= q(z)dz^2$ be the Hopf differential of the above harmonic map $h:\mathbb{C}\rightarrow \HH^2$, where $q(z)$ is a polynomial of degree $(n+2)$.  Recall from \S2.2 that in the induced singular-flat geometry, there are $n$ horizontal and $n$ vertical half-planes arranged in a cyclic order around infinity. 

Choose horizontal leaves $L_1, L_2,\cdots, L_n$ in each of the horizontal half-planes, in cyclic order, at a distance $R\gg 0$ from the set of zeros of $q$.  Denote by $H_i$ the horizontal half-plane bounded by the leaf $L_i$.

Similarly, in each vertical  half-plane around $\infty$, choose a bi-infinite vertical line $V_i$  that intersects $L_i$ and $L_{i+1}$, at a distance at least $R$ from the zeros of $q$, and denote the (smaller) vertical half-plane that it bounds by $C_i$.

We thus obtain a cyclically ordered chain  $\{C_1, H_1, C_2, H_2,\ldots C_n, H_n\}$ of overlapping half-planes such that each intersects the next along a quarter-plane. Note that the union of these half-planes is $C\setminus K$, where $K$ is a compact set. 

\subsubsection{Defining the map}
We shall define the initial map $u_0:\C \to \HH^3$ by first defining it on each of the half-planes $C_i$ and $H_i$ in the above decomposition, and then extending it the whole of $\C$. 

We start with the harmonic map $h:\C \to \HH^2$ defined in the previous subsection.
From the asymptotic behaviour of the harmonic map $h$ (discussed in \S2.2), we know that each vertical half-plane  $C_i$ maps into a cusp of the ideal polygon $P_0$,  i.e.\  a region of $H$ bounded by two geodesic sides that are asymptotic to the $i$-th ideal vertex, and an arc of a horocycle centered at that vertex. We shall denote such a cusp of $P_0$  by  $\widetilde{C}_i$.

 \begin{figure}[t]
 \labellist
 \small\hair 2pt
\pinlabel $H_i$ at 200 180
\pinlabel $C_i$ at 45 200
\pinlabel $L_i$ at 160 95
\pinlabel $\mathbb{H}^2$ at 450 210
\pinlabel $C_{i+1}$ at 280 50

\pinlabel $h$ at 345 100

 \endlabellist 
\centering
\includegraphics[scale=.6]{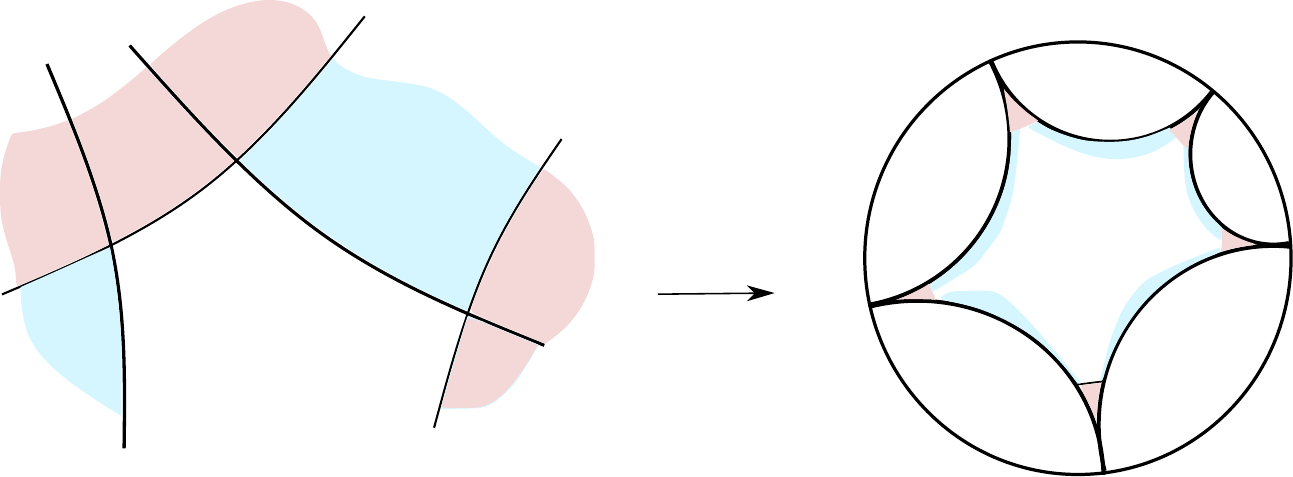}
\caption{The harmonic map $h:\C \to \HH^2$ takes each vertical half-plane $C_i$ in the domain  (shaded red) to a cusp of $P_0$, and each horizontal half-plane $H_i$ to a neighborhood of the $i$-th geodesic side.} 
\label{fig3}
\end{figure}

At each ideal vertex $\xi_i$ of the twisted ideal polygon $P$ in $\HH^3$,  we shall also consider a planar cusped region $\widehat{C}_i$ that is defined as follows: Assume that $\xi_i$ is at infinity in the upper half-space model of $\HH^3$; the geodesic sides $\gamma_{i}$ and $\gamma_{i+1}$ are then vertical lines contained in a totally-geodesic hyperbolic plane $V$. The cusped region $\widehat{C}_i$ is defined to be the subset  of $V$ bounded by the two geodesics and a horocylic line at a height chosen such that the cusps $\widetilde{C}_i$ and $\widehat{C}_i$ are isometric. 

We observe that since $P$ is obtained by bending $P_0$ along diagonals (see Lemma \ref{bend}), the lengths of the geodesic segments along the $i$-th side of $P_0$ and the $i$-th side of $P$, that are  disjoint from the cusps defined above, are exactly the same. 

In what follows, to write down the map,  we shall identify the horizontal half-plane $H_i = \{(x,y)\in\mathbb{R}^2:x\in\mathbb{R}, y>0\}$ and identify the cusp $\widehat{C}_i$ with a subset $\{(x,0,t)\in \mathbb{H}^3:|x|< 1, t>t_0>0\}$.  Let the restriction of the  harmonic map $h$ to $H_i$ be 
\begin{equation}\label{h1}
h_i(x,y)=(f^i(x,y), 0, g^i(x,y)),
\end{equation}
in the upper half-space model of $\HH^3$,  where we assume that the totally geodesic copy of $\mathbb{H}^2$ containing $P_0$ is the vertical plane $y=0$.  By Proposition \ref{est}, we also know that $f^i(x,y) \to 0$ and $g^i(x,y) \to e^{x}$ as $(x,y)$ diverges, exponentially fast in terms of the distance $\lVert (x,y) \rVert$ - we shall use that in the next section.

We  define the initial map $u_0:\mathbb{C}\rightarrow\mathbb{H}^3$  on each $C_i$ to be the restriction of the harmonic map $h$ on $C_i$  post-composed with an isometry that maps $\widetilde{C}_i$ to $\widehat{C_i}$. Note that since we are post-composing with an isometry, each such map is again harmonic.

We will now define the map $u_0$ on each horizontal half-plane. 
On $C_i\cap H_i$ and $C_{i+1} \cap H_{i}$, $u_0$ is already defined; we can assume that these are the quarter-planes $\{(x,y)\in \mathbb{R}^2\ \vert\ x>a_i, y>0\}$ and $\{(x,y)\in \mathbb{R}^2\ \vert\ x< a_{i+1} , y>0\}$ respectively, both contained in $H_i$. What remains is the half-infinite strip $[a_i,a_{i+1}]\times[0,\infty)$ between $C_i\cap H_i$ and $C_{i+1}\cap H_{i}$ that we shall denote by $D_i$.
(See the left side of Figure \ref{fig4}.)

Let $\gamma_i$ be the geodesic line common to the two successive cusps $\widehat{C}_i$ and $\widehat{C}_{i+1}$. There is an angle $\theta_0>0$ such that the elliptic rotation $R_{\theta_0}$ about the axis $\gamma_i$ takes the plane containing $\widehat{C}_i$ to the plane containing $\widehat{C}_{i+1}$.  We can assume that $u_0$ is given by \eqref{h1} on $C_i \cap H_i$ and on $C_{i+1}\cap H_{i}$ is the map $h$ rotated by an angle $\theta_0$:
\begin{equation}\label{h2}
h_i^{\theta_0}(x,y)=(f^i(x,y)\cos\theta_0,f^i(x,y)\sin\theta_0,g^i(x,y))
\end{equation}

To define $u_0$ from the remaining half-strip $D_i$ to $\mathbb{H}^3$, that we shall denote by $u_i$,  we shall interpolate between the maps $h_i$ and $h_i^{\theta_0}$, by rotating the original map $h_i$ by an angle that varies from $0$ on the left quarter-plane, to $\theta_0$ on the right quarter-plane. 

 \begin{figure}[t]
 \labellist
 \small\hair 2pt
\pinlabel $H_i$ at 0 140
\pinlabel $\mathbb{H}^3$ at 450 140
\pinlabel $\theta_0$ at 420 25
\pinlabel $a_i$ at 57 18
\pinlabel $a_{i+1}$ at 150 18
\pinlabel $u_0$ at 265 82

 \endlabellist 
\centering
\includegraphics[scale=0.8]{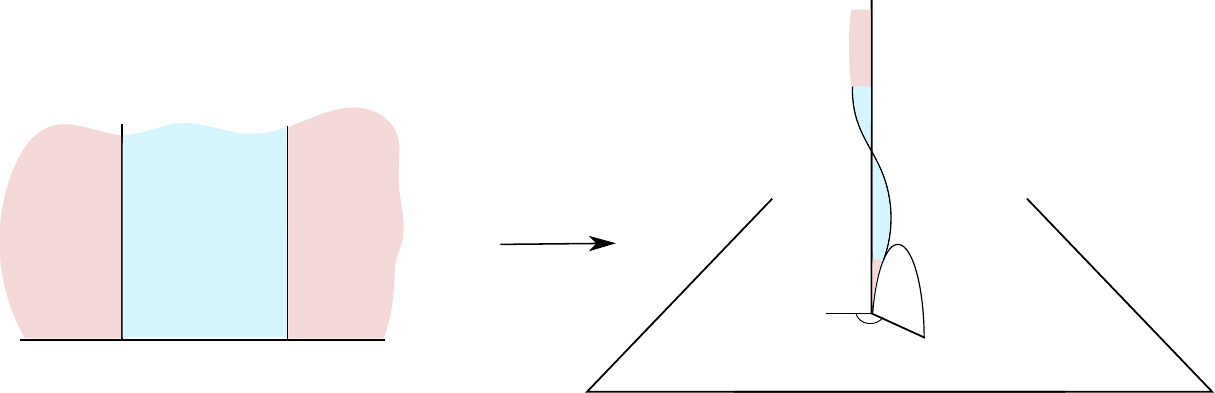}
\caption{On each horizontal half-plane $H_i$, the initial map $u_0$ is obtained by modifying $h$ such that it interpolates between the maps from the vertical half-planes $C_i$ and $C_{i+1}$ to the cusps of $P$. Here, the $i$-th cusp and $(i+1)$-th cusp of $P$ shown on the right lie on totally geodesic planes intersecting at an angle $\theta_0$.}
\label{fig4}
\end{figure}

To describe this, let $(a,b)$ be an open interval containing $[a_i,a_{i+1}]$, and 
choose a $C^2$-smooth function $\theta^i:(a,b)\rightarrow [0,\theta_0]$ such that
\begin{enumerate}[(i)]
\item $\quad \theta^i(x)=0 \quad\text{for all} \quad x\leq a_{i+1} $,\quad $\text{and}\quad\theta^i(x)=\theta_0 \quad\text{for all}\quad x\geq a_{i}$,
\item  The derivatives $\quad (\theta^i)'\quad \text{and} \quad (\theta^i)''\quad \text{ are bounded with}\quad (\theta^i)''(a_i)=0=(\theta^i)''(a_{i+1})$.
\end{enumerate} 

The map $u_i$ is then defined by
\begin{equation}\label{h3}
u_i=(u^1_i,u^2_i,u^3_i)=(f^i(x,y)\cos\theta^i(x),f^i(x,y)\sin\theta^i(x),g^i(x,y)).
\end{equation} 
In view of  $(i)$ and $(ii)$, $h_i, h_i^{\theta_0}$ and $u_i$ together define a $C^2$-smooth map on $H_i$. 

\vspace{.05in} 

Completing the above construction for each $1\leq i\leq n$,  we obtain a $C^2$-smooth map defined on the chain of half-planes $\{C_1, H_1, C_2, H_2,\ldots C_n, H_n\}$ in $\mathbb{C}$. Finally, we choose a $C^2$-smooth extension to the compact complement $K$ to obtain an initial map $u_0:\mathbb{C}\rightarrow \mathbb{H}^3$.

\subsection{Existence of the flow}

In the previous section we defined a $C^2$-smooth map $u_0:\C \to \HH^3$ that is asymptotic to the given twisted ideal polygon $P$.
In this subsection we shall prove that the harmonic map heat flow \eqref{heat1} with initial map $u_0$ exists for all time, by an application of Theorem \ref{wang}. For this, we first establish some properties of $u_0$, including the exponential decay of its tension field. 

\vspace{.05in}

Throughout, let  $\sigma\lvert dz\rvert ^2$ be the conformal metric on the complex plane $\C$ which is obtained by a $C^2$-smoothening  of the singular-flat metric induced by the Hopf differential of the harmonic map $h:\C \to \HH^3$ mapping into a totally geodesic plane, that we introduced in \S3.1.1. 
The smoothening is done locally, in a neighborhood of each singularity that is contained in the interior of the compact set $K$, away from the chain of half-planes $\{C_1,H_1,C_2,H_2,\ldots, C_n,H_n\}$.

Note that with this metric $(\C, \sigma)$ is complete, since  the original Hopf differential metric is complete, and there are finitely many singularities. Moreover, since the singularities have a cone-angle that is greater than $2\pi$, the smoothened metric $\sigma$ is negatively curved in neighborhoods of those points, and flat in their complement.

We first start with the boundedness of the energy density of the initial map $u_0$.

\begin{lem}\label{intped}
The energy density $e(u_0)$ is uniformly bounded on $\C$. 
\end{lem}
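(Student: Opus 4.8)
The plan is to show that the energy density $e(u_0)$ is bounded by examining the map on each piece of the decomposition $\{C_1, H_1, \ldots, C_n, H_n\}$ of $\C \setminus K$ separately, and then using compactness on $K$. First I would recall that the energy density is computed with respect to the smoothened metric $\sigma$, which agrees with the flat $4\phi$-metric outside $K$; so on each half-plane it suffices to bound the energy with respect to the Euclidean coordinates $(x,y)$ that are the natural coordinates there. On each vertical half-plane $C_i$, the map $u_0$ is the harmonic map $h$ post-composed with an isometry of $\HH^3$; since isometries preserve energy density and the harmonic map $h:\C\to\HH^2$ has energy density $e = \frac12(e_\phi)$ converging to $2$ (in the $4\phi$-normalization) by Lemma \ref{est0}, the energy density is uniformly bounded on $C_i$.

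The substantive step is the bound on each horizontal half-plane $H_i$, and more precisely on the interpolating strip $D_i = [a_i, a_{i+1}] \times [0,\infty)$, where $u_0 = u_i$ is given by the explicit formula \eqref{h3}, namely $u_i = (f^i\cos\theta^i(x), f^i\sin\theta^i(x), g^i)$. I would compute $\|du_i\|^2$ directly in the upper half-space metric $ds^2 = (dx_1^2 + dx_2^2 + dx_3^2)/x_3^2$. The partial derivatives of the three components involve $f^i, g^i$, their first derivatives $f^i_x, f^i_y, g^i_x, g^i_y$, and the factors $\cos\theta^i, \sin\theta^i, (\theta^i)'$. Since $\theta^i$ and $(\theta^i)'$ are bounded by hypothesis (ii) in the construction, and since $\|f^i\|_{C^1} = O(e^{-\alpha\|(x,y)\|})$ while $\|g^i - e^x\|_{C^1} = O(e^{-\alpha\|(x,y)\|})$ by Proposition \ref{est}, every term in $\|du_i\|^2$ is a bounded combination of quantities that are each either uniformly bounded (like $g^i_x / g^i \approx 1$, using $g^i \approx e^x$) or exponentially small. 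The only place one must be slightly careful is the cross terms produced by differentiating $f^i\cos\theta^i(x)$ in $x$, which contributes $f^i (\theta^i)' \sin\theta^i$; but $f^i$ is exponentially small and $(\theta^i)'$ is bounded, so this is fine. This gives $e(u_i) = O(1)$ uniformly on $D_i$, and combined with the transition to $h_i, h_i^{\theta_0}$ on the overlapping quarter-planes (which are just $h$ composed with a fixed rotation, hence have the same bounded energy density as $h$), we conclude $e(u_0)$ is bounded on all of $\C\setminus K$.

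Finally, on the compact set $K$ the map $u_0$ is a fixed $C^2$-smooth map and $\sigma$ is a smooth metric, so $e(u_0)$ is continuous on the compact set $K$ and hence bounded there. Patching the two bounds gives the claim. The main obstacle — though it is more bookkeeping than a genuine difficulty — is keeping track of the two scalings in play: the $4\phi$-metric versus the natural $\phi$-coordinates (the factor-$2$ rescaling discussed in Remark (ii) following Lemma \ref{est0}), and making sure the estimates of Proposition \ref{est}, which are stated for a harmonic map into $\HH^2$, transfer correctly through the isometric embedding $\HH^2 \hookrightarrow \HH^3$ and through the rotation interpolation. I expect no new ideas are needed beyond the estimates already established; the point is simply that $u_0$ was built out of $h$ by operations (isometries, bounded-angle rotations, smooth interpolation supported where $f^i$ is exponentially small) that do not spoil the energy bound.
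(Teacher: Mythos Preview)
Your proposal is correct and follows essentially the same approach as the paper: decompose into the half-planes and the compact core, use that post-composition with an isometry preserves energy density on the $C_i$ and on the quarter-planes of $H_i$, and do an explicit computation on the interpolating strip $D_i$ using the $C^1$-estimates of Proposition~\ref{est}. The only cosmetic difference is that the paper organizes the strip computation into the clean identity $e(u_0) = e(h_0) + f^2(\theta')^2/g^2$ rather than bounding each term separately, and it does not bother to spell out the compactness argument on $K$ that you (correctly) include.
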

\begin{proof}
Consider the restriction of $u_0$ to a horizontal half-plane $H_i$; recall that this map is obtained by starting with $h$ and modifying it. Indeed, it was defined in the three pieces $C_i \cap H_i, C_{i+1} \cap H_i$ and $D_i$ constituting $H_i$ by the equations \eqref{h1},\eqref{h2} and \eqref{h3}. Recall also that the original map $h_i = h\vert_{H_i}$ is harmonic, and is given by the equation \eqref{h1}. 

Since the conformal metric $\sigma$ on $\C$ coincides with the metric induced by the Hopf differential of $h$, and $H_i$ is a horizontal half-plane in that metric, the energy density $e(h_i)$ is estimated by \eqref{ede}, and in particular, is uniformly bounded. 

This immediately implies that the energy density of $u_0$ is uniformly bounded on $C_i \cap H_i$ and  $C_{i+1} \cap H_i$, since it coincides with $h_i$ in the former subset, and with $h_i$ post-composed by an isometry (namely, an elliptic rotation) in the latter region.  Here we are using the fact that post-composition with an isometry does not affect the energy density. 

It remains to derive an estimate for the energy density in the half-infinite strip $D_i$, where the map interpolates between the two maps just mentioned. Denote the restriction $u_0\vert_{D_i}$ by $u_i$. For ease of notation, in the following calculation, we shall  drop the subscript in what follows, and set $u_i=u = (u^1,u^2, u^3)$ and $f^i=f,g^i=g,\theta^i=\theta$. Computing the partial derivatives, we obtain: 
\[
 \frac{\partial u^1}{\partial x}=f_x\cos\theta(x)-f(x,y)\sin\theta(x)\theta'(x),\frac{\partial u^1}{\partial y}=f_y\cos\theta(x),\]
\[\frac{\partial u^2}{\partial x}=f_x\sin\theta(x)+f(x,y)\cos\theta(x)\theta'(x),\frac{\partial u^2}{\partial y}=f_y\sin\theta(x).\]  From this, we calculate $$e(u_0)=\frac{1}{g^2}\sum_{k=1}^{2}\left[\left(\frac{\partial u^k}{\partial x}\right)^2 +\left(\frac{\partial u^k}{\partial y}\right)^2\right]=e(h_0)+\frac{f^2\theta'^2}{g^2}.$$ Now since $h_i=(f_i,g_i)$ is  asymptotically $C^1$-close to the collapsing map by Proposition  \ref{est}, we know that  $f \sim 0$ and $g \sim e^{x}$ at any point $(x,y)$ of large norm. Moreover, we have chosen the function $\theta$ to have a uniformly bounded first derivative. From these, it follows that there exists $E>0$ such that $e(u_0)\leq E$.
\end{proof}

Next, we show that the tension field of the initial map $u_0$ has an exponential decay in $\C$. Since on each vertical half-plane $C_i$ the map $u_0$ is the harmonic map $h$ post-composed by an isometric embedding to the cusp $\widehat{C}_i$, the tension field there is zero, and it remains to show:

\begin{lem}\label{intptf}
The norm of the tension field $|\tau(u_0)|$ decays exponentially in each half-plane $H_i$.
\end{lem}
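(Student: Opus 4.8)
The strategy is to compute the tension field of $u_i = u_0\vert_{H_i}$ explicitly from the formula \eqref{h3} and to compare it with the tension field of the harmonic map $h_i$ given by \eqref{h1}, which vanishes. The key point is that $u_i$ differs from $h_i$ only through the rotation angle $\theta^i(x)$, which is constant ($0$ or $\theta_0$) outside the compact interval $[a_i, a_{i+1}]$ in the $x$-direction; so in those regions $u_i$ coincides with $h_i$ (or $h_i$ post-composed with an elliptic rotation), and the tension field is exactly zero there. The entire tension field of $u_0$ on $H_i$ is therefore supported on the half-strip $D_i = [a_i, a_{i+1}] \times [0,\infty)$, and on $D_i$ it is a finite sum of terms, each of which is a product of derivatives of $\theta^i$ (and the Christoffel symbols of $\HH^3$ evaluated at $u_i$) with the components $f^i, g^i$ and their first and second derivatives.

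\textbf{Key steps.} First I would recall the local coordinate formula for the tension field, $\tau(u)^\alpha = \Delta_\sigma u^\alpha + \sigma^{-1}\Gamma'^{\alpha}_{\beta\gamma}(u)\bigl(u^\beta_x u^\gamma_x + u^\beta_y u^\gamma_y\bigr)$, using that the domain metric $\sigma|dz|^2$ is flat on $H_i$ (so $\Delta_\sigma = \sigma^{-1}(\partial_{xx}+\partial_{yy})$ up to the conformal factor, which is bounded on $H_i$). Next I would substitute $u_i = (f\cos\theta, f\sin\theta, g)$ and subtract off the corresponding expression for the harmonic map $\tilde h_i = (f, 0, g)$ (or rather its rotated versions), using that $\tau(\tilde h_i) = 0$ and that the upper half-space Christoffel symbols are explicit rational functions of the third coordinate only. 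The difference $\tau(u_i) - (\text{rotation of }\tau(\tilde h_i))$ collects exactly the terms involving $\theta'$ and $\theta''$: schematically, terms like $f''\theta'$-type contributions are absent (there is no second $x$-derivative cross term because rotation is an isometry of $\HH^3$), and what survives is a sum of the form $c_1 f\theta'' + c_2 f_x \theta' + c_3 f (\theta')^2$ times bounded functions of $g$ and $g^{-1}$. Finally I would invoke Proposition \ref{est}: in $H_i$, far from the zeros of $q$, one has $f^i = O(e^{-\alpha\|(x,y)\|})$ together with the same exponential decay of $f^i_x, f^i_y$ in the $C^1$-norm, while $g^i \to e^x$ with $g^i$ and $1/g^i$ controlled, and $\theta', \theta''$ are uniformly bounded (and supported in the bounded $x$-interval). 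Hence every surviving term in $\tau(u_i)$ is $O(e^{-\alpha\|(x,y)\|})$, which gives the exponential decay of $|\tau(u_0)|$ on $H_i$, as claimed. Since $x$ ranges over the bounded interval $[a_i, a_{i+1}]$ on $D_i$, the distance $\|(x,y)\|$ is comparable to $y$, so one equivalently obtains decay exponential in $y$.

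\textbf{Main obstacle.} The computation itself is routine once organized correctly; the main subtlety is keeping track of the normalization of the domain metric (the $4\phi$-metric versus the $\phi$-metric, \textit{c.f.} Remark (ii) following Lemma \ref{est0}) and the $C^2$-smoothening near $K$, so that the conformal factor $\sigma$ and its derivatives stay bounded on $H_i$ and do not interfere with the exponential decay. A second point requiring a little care is that Proposition \ref{est} is stated for the harmonic map $h$ into $\HH^2$; one must confirm that after the isometric embedding $\HH^2 \hookrightarrow \HH^3$ and the choice of coordinates in \eqref{h1}, the decay of $f^i$ and its derivatives is genuinely in terms of $\|(x,y)\|$ uniformly over the strip. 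Neither of these is a serious difficulty: both are handled by the estimates already established, so the lemma follows.
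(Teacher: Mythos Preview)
Your proposal is correct and follows essentially the same approach as the paper: explicitly computing the tension field of $u_i=(f\cos\theta,f\sin\theta,g)$ on the half-strip $D_i$, subtracting off the vanishing tension of the harmonic map $h_i$, and observing that the surviving terms are products of $\theta',\theta''$ with $f$, $f_x$ and bounded functions of $g^{-1}$, which decay exponentially by Proposition~\ref{est}. The paper carries out this computation component by component to arrive at the bound $|\tau(u)|^2\leq \frac{C}{g^2}\bigl(f^2\theta'^4+f^2\theta''^2+f_x^2\theta'^2+\frac{f^2\theta'^2}{g^2}+\frac{f^4\theta'^4}{g^2}\bigr)$, which is exactly the schematic form you anticipated.
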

\begin{proof}
Recall that $u_0|_{H_i}$ consists of the maps $h_i,h_i^{\theta_0},u_i$ of the form \eqref{h1},\eqref{h2},\eqref{h3}. As $h_i$ and $h_i^{\theta_0}$ are harmonic, their tension field vanishes. So we need to compute the tension field of $u_i$ on $D_i$. Once again, for ease of notation, we drop the subscript $i$, and write $u$ instead of $u_i$ et cetera. In what follows $\Delta_0$ is the usual Laplacian on $\C$ and $\Delta$ is the Laplacian for the conformal metric $\sigma$. The components of the tension field of $u$ are given by
\begin{align*}
&\tau(u)^1 =\Delta u^1+\sigma^{ij}(x)\frac{\partial u^{\beta}}{\partial x^i}\frac{\partial u^{\gamma}}{\partial x^j}\Gamma'^1_{\beta \gamma}(u(x))\\
&=\sigma^{11}\Delta_0(f(x,y)\cos\theta(x))+\sigma^{ii}(x)\frac{\partial u^1}{\partial x^i}\frac{\partial u^3}{\partial x^i}\Gamma'^1_{13}(u(x))\quad(\text{since}\quad \sigma^{ij}=0\quad \text{for} \quad i\neq j,\sigma^{11}=\sigma^{22})\\
&=\sigma^{11}\cos\theta(x)\Delta_0 f-2f_x\sin\theta(x)\theta'(x)-f(x,y)(\cos\theta(x)\theta'^2(x)+\theta''(x)\sin\theta(x))\\
&+2\cos\theta(x)(\Gamma'^1_{13}f_xg_x+\Gamma'^1_{13} f_yg_y)
-2\Gamma'^1_{13}f(x,y)\sin\theta(x)\theta'(x)\\
&=\cos\theta(x)\tau(h_0)^1-f(x,y)(\cos\theta(x)\theta'^2(x)+\theta''(x)\sin\theta(x))+\frac{2}{g}f(x,y)\sin\theta(x)\theta'(x)-2f_x\sin\theta(x)\theta'(x)\\
&= -f(x,y)(\cos\theta(x)\theta'^2(x)+\theta''(x)\sin\theta(x))+\frac{2}{g}f(x,y)\sin\theta(x)\theta'(x)-2f_x\sin\theta(x)\theta'(x).
\end{align*}
Similarly, $\tau(u)^2=f(x,y)(-\sin\theta(x)\theta'^2(x)+\theta''(x)\cos\theta(x))-\frac{2}{g}f(x,y)\cos\theta(x)\theta'(x)+2f_xcos\theta(x)\theta'(x)$.
Finally, 
\begin{align*}
&\tau(u)^3 =\Delta u^2+\sigma^{ij}(x)\frac{\partial u^{\beta}}{\partial x^i}\frac{\partial u^{\gamma}}{\partial x^j}\Gamma'^3_{\beta \gamma}(u(x))\\
&=\Delta_0g+\sigma^{ii}(x)\frac{\partial u^1}{\partial x^i}\frac{\partial u^1}{\partial x^i}\Gamma'^3_{11}(u(x))+\sigma^{ii}(x)\frac{\partial u^2}{\partial x^i}\frac{\partial u^2}{\partial x^i}\Gamma'^3_{22}(u(x))+\sigma^{ii}(x)\frac{\partial u^3}{\partial x^i}\frac{\partial u^3}{\partial x^i}\Gamma'^3_{33}(u(x))\\
&=\Delta_0g+\frac{1}{g}(|\nabla_0u^1|^2+|\nabla_0u^2|^2-|\nabla_0u^3|^2)\\
&=\Delta_0g+\frac{1}{g}(|\nabla_0f|^2-|\nabla_0g|^2)+\frac{f^2\theta'^2}{g} =\tau(h_0)^3+\frac{f^2\theta'^2}{g}= \frac{f^2\theta'^2}{g}.
\end{align*}
Observe that, $x$ stays in the bounded interval where the interpolation is happening. 
Now, 
\begin{align}\label{dlemma}
\nonumber|\tau(u)|^2&=\frac{1}{g^2}\left(|(\tau(u)^1)|^2+|(\tau(u)^2)|^2+|(\tau(u)^3)|^2\right)\\
&\leq \frac{C}{g^2}\left(f^2\theta'^4+f^2\theta''^2+f_x^2\theta'^2+\frac{f^2\theta'^2}{g^2}+\frac{f^4\theta'^4}{g^2}\right).
\end{align} 
for some constant $C>0$. 

Using the fact that the derivatives of $\theta$ are bounded, and (i) and (ii) of Proposition \ref{est}  imply that $f$ and $f_x$ have an exponential decay, we obtain 
\begin{equation}\label{uselet2}
|\tau(u_0)|^2(z)\leq C_0e^{-\delta R(z)}
\end{equation}
for some $\delta>0, C_0 >0$.
\end{proof}
\begin{cor}
 The norm of the tension field $|\tau(u_0)|\in L^p(\mathbb{C})\cap L^{\infty}(\mathbb{C})$, for $p\geq 1$.
\end{cor}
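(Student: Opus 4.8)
The plan is to patch together the behaviour of $\tau(u_0)$ on the three types of pieces in the decomposition of $\C$ from \S3.1. On each vertical half-plane $C_i$, the map $u_0$ is $h$ post-composed with an isometry of $\HH^3$, hence harmonic, so $\tau(u_0)\equiv 0$ there; the same holds on the two quarter-planes $C_i\cap H_i$ and $C_{i+1}\cap H_i$ inside a horizontal half-plane $H_i$, where $u_0$ equals $h_i$ or its rotate $h_i^{\theta_0}$. Thus the only region of $H_i$ carrying a non-zero tension field is the half-infinite strip $D_i=[a_i,a_{i+1}]\times[0,\infty)$, on which Lemma \ref{intptf}, and in particular the bound \eqref{uselet2}, gives $|\tau(u_0)|^2(z)\leq C_0e^{-\delta R(z)}$ for some $\delta,C_0>0$, where $R(z)$ is (comparable to) the distance of $z$ from a fixed basepoint in the metric $\sigma$, equivalently the $4\phi$-metric. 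Finally, on the compact complement $K$ the map $u_0$ is $C^2$-smooth, so $|\tau(u_0)|$ is continuous, hence bounded, there. For the $L^\infty$ statement one simply takes the maximum of these three (exhaustive) estimates.

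For the $L^p$ statement, fix $p\geq 1$ and integrate $|\tau(u_0)|^p$ with respect to the area form $dA_\sigma$ of $\sigma$. The contribution of $\bigcup_i C_i$ vanishes, and the contribution of $K$ is finite since $K$ is compact with a bounded integrand. On each strip $D_i$, the metric $\sigma$ restricts to the flat metric of a Euclidean half-plane in the coordinates $(x,y)$ with $x\in[a_i,a_{i+1}]$, so $dA_\sigma = dx\,dy$ there; moreover $R(z)\geq y-c_0$ for some constant $c_0\geq 0$ (the $y$-coordinate being, up to a bounded shift, the $\sigma$-distance to $K$). Hence
\[
\int_{D_i}|\tau(u_0)|^p\,dA_\sigma\;\leq\;C_0^{p/2}\,(a_{i+1}-a_i)\int_0^\infty e^{-\frac{\delta p}{2}(y-c_0)}\,dy\;<\;\infty .
\]
Summing over the finitely many indices $i$ and adding the finite contribution of $K$ yields $|\tau(u_0)|\in L^p(\C)$ for every $p\geq 1$.

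There is no genuine obstacle here; the only point to watch is that the exponential decay supplied by Lemma \ref{intptf} is measured in a distance function in which each strip $D_i$ has bounded width, so that the decay really is integrable over $D_i$. It is worth noting that this corollary is precisely what is needed to invoke Theorem \ref{wang} with $M=\C$ and $N=\HH^3$: the $L^\infty$ bound makes $b(x,t)=\bigl(\int_\C H(x,y,t)|\tau(u_0)|^2(y)\,dy\bigr)^{1/2}$ finite for all $(x,t)$, and the resulting bound $b(x,t)\leq\|\tau(u_0)\|_{L^\infty(\C)}$ makes $\int_0^T\int_\C e^{-cr^2(x)}b^2(x,t)\,dx\,dt<\infty$, so the heat flow started at $u_0$ exists for all time and is unique.
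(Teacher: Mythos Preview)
Your proof is correct and is exactly the argument the paper leaves implicit: the corollary is stated in the paper without proof, as an immediate consequence of the exponential decay in Lemma \ref{intptf} together with the fact that the tension field vanishes on each $C_i$ and is continuous on the compact $K$. Your write-up simply makes explicit the decomposition $\C = K \cup \bigcup_i C_i \cup \bigcup_i D_i$ (up to overlaps on which $\tau(u_0)=0$) and the integrability of $e^{-\delta p\, y/2}$ over each finite-width strip $D_i$, which is precisely what the paper takes for granted.
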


The preceding lemmas now imply:

\begin{prop}
    The harmonic map heat flow \eqref{heat1} starting with the initial map $u_0:\C \to \HH^3$ constructed in \S3.1 has a long-time solution $u: \C \times [0,\infty) \to \HH^3$ which is also unique.
\end{prop}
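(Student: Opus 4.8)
The plan is to verify the two hypotheses of Theorem \ref{wang} (J.\ Wang's long-time existence and uniqueness result), with $M = (\C,\sigma)$ and $N = \HH^3$. The target $\HH^3$ has constant sectional curvature $-1 \leq 0$, is complete and simply-connected, and by construction $u_0$ is a $C^2$ map, so the structural hypotheses are met. It remains to verify the two integrability conditions: (a) that $b(x,t) := \left(\int_\C H(x,y,t)|\tau(u_0)|^2(y)\,dy\right)^{1/2}$ is finite for all $(x,t) \in \C \times (0,\infty)$, which gives long-time existence together with the pointwise bound $|\tau(u_t)(x)| \leq b(x,t)$; and (b) that for every $T>0$ we have $\int_0^T \int_\C e^{-c r^2(x)} b^2(x,t)\,dx\,dt < \infty$ for some $c>0$, which gives uniqueness.

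First I would recall what the previous lemmas have established: by Lemma \ref{intptf} (and the observation that $\tau(u_0)$ vanishes on each vertical half-plane $C_i$ and on the compact core $K$, being harmonic or $C^2$ there), the corollary gives $|\tau(u_0)| \in L^p(\C) \cap L^\infty(\C)$ for all $p \geq 1$; indeed \eqref{uselet2} gives the stronger pointwise decay $|\tau(u_0)|^2(z) \leq C_0 e^{-\delta R(z)}$, where $R(z)$ is the $\sigma$-distance from the origin, which is comparable to $r(z)$ up to bounded additive and multiplicative error (the metric $\sigma$ agrees with the singular-flat $4\phi$-metric outside $K$, and the latter is quasi-isometric to the flat metric). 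For hypothesis (a): since the heat kernel $H(x,y,t)$ of the complete manifold $(\C,\sigma)$ is, for each fixed $(x,t)$, a probability density in $y$ (it integrates to $1$ in $y$, as $\sigma$ is complete with bounded geometry away from finitely many cone points), and $|\tau(u_0)|^2 \leq \|\tau(u_0)\|_\infty^2 < \infty$, we immediately get $b(x,t)^2 \leq \int_\C H(x,y,t)\|\tau(u_0)\|_\infty^2\,dy = \|\tau(u_0)\|_\infty^2 < \infty$. So $b(x,t)$ is finite (in fact uniformly bounded), and Theorem \ref{wang} yields the long-time solution $u: \C \times [0,\infty) \to \HH^3$.

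For hypothesis (b), the point is that $b(x,t)$ is not merely bounded but decays in $x$, inheriting the exponential decay of $\tau(u_0)$. I would argue as follows: the heat kernel on $(\C,\sigma)$ satisfies a Gaussian upper bound $H(x,y,t) \leq C_1 t^{-1} \exp\!\left(-\frac{d_\sigma(x,y)^2}{C_2 t}\right)$ (valid since $(\C,\sigma)$ has nonnegative Ricci curvature outside a compact set and bounded geometry — or one can simply compare with the Euclidean kernel outside $K$ and handle $K$ separately). Combining this with $|\tau(u_0)|^2(y) \leq C_0 e^{-\delta d_\sigma(0,y)}$, a standard convolution estimate shows that for each fixed $t$, $b(x,t)^2 \leq C(t) e^{-\delta' d_\sigma(0,x)}$ for suitable $\delta' > 0$ (the Gaussian spreads the mass but the exponential tail of $\tau(u_0)$ survives, with the constant $C(t)$ growing at most polynomially, or like $e^{C t}$, in $t$). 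Then on $[0,T] \times \C$ we have $b^2(x,t) \leq C(T) e^{-\delta' r(x) + O(1)}$, and hence $\int_0^T \int_\C e^{-c r^2(x)} b^2(x,t)\,dx\,dt \leq C'(T)\int_\C e^{-c r^2(x)} e^{-\delta' r(x)}\,dx < \infty$ for any $c > 0$ (indeed even without the $e^{-c r^2}$ factor, since $e^{-\delta' r(x)}$ alone is integrable on the plane). This verifies the uniqueness hypothesis.

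The main obstacle, and the step requiring the most care, is the heat-kernel estimate on $(\C,\sigma)$: one must be sure that the Gaussian upper bound (or at least enough of it to push the exponential decay of $\tau(u_0)$ through the heat semigroup, and to control the $t$-dependence of the resulting constants on $[0,T]$) genuinely holds for this particular incomplete-looking-but-actually-complete metric with finitely many cone points of angle $> 2\pi$. Since the cone points lie inside the compact set $K$ and the metric is flat (Euclidean) outside $K$, one can invoke standard parabolic theory: $(\C,\sigma)$ has bounded geometry in the sense needed (Ricci bounded below, volume growth at most Euclidean), so the Li--Yau / Grigor'yan Gaussian bounds apply. Alternatively, and perhaps more cleanly for the write-up, one can avoid sharp kernel estimates entirely by noting that hypothesis (b) only requires an $L^1$-in-$(x,t)$ bound weighted by $e^{-cr^2}$: since $\|\tau(u_0)\|_\infty < \infty$ gives $b(x,t) \leq \|\tau(u_0)\|_\infty$ uniformly, we get $\int_0^T\int_\C e^{-cr^2(x)} b^2(x,t)\,dx\,dt \leq T\|\tau(u_0)\|_\infty^2 \int_\C e^{-cr^2(x)}\,dx < \infty$ directly, with no kernel estimate needed at all. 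I would present this last, simpler argument for (b), relegating the decay-of-$b$ discussion to a remark.
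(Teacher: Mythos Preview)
Your proposal is correct and, in its final form, takes exactly the same approach as the paper: bound $b(x,t)$ uniformly by $\lVert\tau(u_0)\rVert_\infty$ using $\int_\C H(x,y,t)\,dy \leq 1$, which immediately gives both the finiteness of $b$ for existence and the finiteness of $\int_0^T\int_\C e^{-cr^2}b^2$ for uniqueness. The paper goes straight to this simple argument; your detour through Gaussian kernel bounds and spatial decay of $b$ is unnecessary here (though that decay is established separately later, in Lemma~\ref{uboundtf}). One small slip: $\tau(u_0)$ need not vanish on the compact core $K$ (the extension there is merely $C^2$, not harmonic), but this is irrelevant since you only use $\lVert\tau(u_0)\rVert_\infty < \infty$.
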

\begin{proof} To apply Theorem \ref{wang} we check that its hypotheses hold. Indeed,
\begin{equation}\label{bbound}
b^2(x,t)=\int_\mathbb{C}H(x,y,t)|\tau(u_0)|^2(y)dy \leq \lVert \tau(u_0) \rVert_\infty^2\int_\mathbb{C}H(x,y,t)dy= C^2 < \infty 
\end{equation}
where the supremum norm of the tension field $C = \rVert \tau(u_0) \rVert_\infty $ is finite by the previous lemma.  The long-time solution thus exists, and satisfies \begin{equation}\label{ico11}
|\tau(u)(x,t)|\leq b(x,t).
\end{equation} 
We shall derive a better estimate for this later. 

Note that for any $T>0$ \[\int_{0}^{T}\int_\mathbb{C} e^{-r^2(x)}b^2(x,t)dxdt\leq C^2 \int_{0}^{T}\int_\mathbb{C} e^{-r^2(x)}dxdt<\infty\]  and $\mathbb{H}^3$ is simply connected. Thus,  the uniqueness statement in  Theorem \ref{wang} also holds. 
\end{proof}

\subsection{Estimates along the flow}\label{esubs}

In the subsection, let $u_t: (\C, \sigma) \to \HH^3$ be the time-$t$ map of the harmonic map heat flow with initial map $u_0$. We shall establish properties of $u_t$, including that it has a bounded energy density (independent of $t$), an exponentially decaying tension field, and is asymptotic to the initial map $u_0$. 

\vspace{.05in}

For the first lemma on the energy density, we shall use the following result, known as Moser's Harnack inequality for subsolutions of the heat equation on a Riemannian manifold $M$.

\begin{prop}{[\cite{WH}, Proposition 7.5, p.268}]\label{mh}
Let $v\in C^{\infty}(M\times[t_0-R^2,t_0])$ be a non-negative function  satisfying\[\left(\Delta-\frac{\partial}{\partial t}\right)v\geq-C v,\] for some $C>0$ on $ B_R(x_0)\times[t_0-R^2,t_0]$. Then there exists a positive constant $C_2$ such that\[v(x_0,t_0)\leq C_2R^{-(m+2)}\int_{t_0-R^2}^{t_0}\int_{B_R(x_0)}v(y,s)dvol_g(y) ds,\] where $m$ is the dimension of $M$.
\end{prop}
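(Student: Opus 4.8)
The final statement quoted in the excerpt is Moser's Harnack inequality (Proposition~\ref{mh}), which is cited verbatim from \cite{WH}, so a ``proof proposal'' here really means sketching how one proves the parabolic Harnack inequality for nonnegative subsolutions of $(\Delta - \partial_t)v \ge -Cv$ on a Riemannian manifold, and how one would use it in the context of this paper. I will do both: first the standard Moser-iteration proof of the inequality itself, then its intended application.

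\textbf{Reducing to the model estimate.} The plan is to follow the classical Moser iteration scheme. First I would normalize: after the substitution $\tilde v = e^{-Ct}v$ one checks that $\tilde v$ is a genuine nonnegative subsolution of the heat equation $(\Delta - \partial_t)\tilde v \ge 0$ up to lower-order terms absorbed by the exponential, so it suffices to treat the case $C = 0$ on a slightly smaller parabolic cylinder, at the cost of a multiplicative constant depending on $C R^2$. Next, by a scaling/covering argument and the fact that a geodesic ball $B_R(x_0)$ with a two-sided Ricci bound (which is available here since $(\C,\sigma)$ has bounded geometry --- it is flat outside a compact set and smoothly negatively curved near the finitely many cone points, and has injectivity radius bounded below) admits a uniform Sobolev and a uniform Poincar\'e inequality, one reduces to a scale-invariant estimate on the unit cylinder.

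\textbf{The iteration.} The heart of the argument is the energy (Caccioppoli) inequality: testing the differential inequality for $\tilde v$ against $\eta^2 \tilde v^{p-1}$ for a space-time cutoff $\eta$ and $p \ge 1$, integrating by parts, and using Young's inequality gives control of $\int \eta^2 |\nabla \tilde v^{p/2}|^2$ and $\sup_t \int \eta^2 \tilde v^p$ by $\int (|\nabla \eta|^2 + |\partial_t \eta^2|)\tilde v^p$ over the larger cylinder. Feeding this into the Sobolev inequality produces a reverse-H\"older (local boundedness) gain: the $L^{\kappa p}$ norm of $\tilde v$ on a smaller cylinder is bounded by a constant times the $L^p$ norm on a larger one, where $\kappa = 1 + 2/m > 1$. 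Iterating this over a sequence of shrinking cylinders with $p = \kappa^j$, $j \to \infty$, and summing the resulting geometric series of constants, one obtains $\sup_{Q_{1/2}} \tilde v \le C_2 \left( \int_{Q_1} \tilde v \right)$ --- i.e.\ the $L^\infty$--$L^1$ bound claimed, since we only need the one-sided (subsolution) statement and hence only this local boundedness half, not the full two-sided Harnack. Undoing the normalization and rescaling back to radius $R$ reinstates the $R^{-(m+2)}$ weight (the $m+2$ being the parabolic scaling dimension of the space-time cylinder).

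\textbf{Application in the paper.} In the paper this proposition is applied with $v$ equal to (a cutoff of) the energy density $e(u_t)$, or a comparable nonnegative quantity built from $u_t$: the Eells--Sampson Bochner formula for the harmonic map heat flow into the nonpositively curved target $\HH^3$ gives precisely $(\Delta - \partial_t) e(u_t) \ge -C e(u_t)$ when the domain $(\C,\sigma)$ has Ricci curvature bounded below by $-k$ (so $C$ can be taken to be $2k$). One then integrates this over a unit ball and a unit time-interval; the point is that the space-time integral $\int_{t_0-1}^{t_0}\int_{B_1(x_0)} e(u_s)\,dvol\,ds$ is controlled --- e.g.\ via an energy-monotonicity or total-energy argument, or simply because $e(u_0)$ is already uniformly bounded (Lemma~\ref{intped}) and energy is non-increasing along the flow --- giving a pointwise bound on $e(u_t)(x_0)$ uniform in $x_0$ and $t$. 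The main obstacle I would anticipate is not in the iteration itself (which is routine) but in verifying the geometric hypotheses uniformly on \emph{all} balls $B_1(x_0)$ in the noncompact domain $(\C,\sigma)$: one needs the Sobolev/Poincar\'e constants to be uniform in $x_0$, which here follows from the bounded-geometry features of $\sigma$ (flat outside a compact set, uniformly negatively curved near the cone points, lower bound on injectivity radius), so this is a matter of bookkeeping rather than a genuine difficulty.
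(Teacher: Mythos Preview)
The paper does not prove Proposition~\ref{mh} at all: it is quoted verbatim from \cite{WH} as a known result, so there is no ``paper's own proof'' to compare against. Your Moser-iteration sketch is the standard argument and is correct in outline (one small point: the substitution $\tilde v = e^{-Ct}v$ in fact makes $\tilde v$ an \emph{exact} subsolution of the heat equation, not merely up to lower-order terms, so that step is cleaner than you suggest). Your anticipated application is precisely what the paper does in Lemma~\ref{edb}: the Weitzenb\"ock formula \eqref{wbock1} gives $(\Delta-\partial_t)e(u_t)\ge -k\,e(u_t)$, and Proposition~\ref{mh} is invoked to bound $e(u_t)(x_0)$ by the local space-time average, which is then controlled using the monotonicity of energy along the flow and the initial bound from Lemma~\ref{intped}. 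You also correctly flag the one genuine issue in the application --- uniformity of the constant $C_2$ over all balls in the noncompact domain --- and correctly resolve it via the bounded geometry of $(\C,\sigma)$.
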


\begin{lem}\label{edb}
The energy density $e(u_t)$ of the solution $u_t$ is uniformly bounded on $\mathbb{C}\times [0,\infty)$.
\end{lem}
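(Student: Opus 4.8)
The plan is to use the parabolic Harnack/Moser inequality (Proposition \ref{mh}) applied to the energy density $e(u_t)$, combined with the Bochner formula for the harmonic map heat flow. First I would recall the Bochner-type inequality: for a solution $u_t$ of \eqref{heat1} into a target of non-positive curvature, the energy density $e(u_t)$ satisfies
\[
\left(\Delta - \frac{\partial}{\partial t}\right) e(u_t) \geq -C\, e(u_t),
\]
where $C$ depends only on a lower bound for the Ricci curvature of the domain $(\C,\sigma)$; this bound is available since $\sigma$ is flat outside a compact set and negatively curved (hence bounded Ricci) in the neighborhoods of the former cone points. Thus $e(u_t)$ is a non-negative subsolution of a linear parabolic inequality of exactly the form required by Proposition \ref{mh}.

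\textbf{Key steps.} The argument then proceeds as follows. Fix a point $(x_0,t_0)\in \C\times[0,\infty)$. Applying Proposition \ref{mh} with a fixed radius $R$ (say $R=1$, using that the injectivity radius of $(\C,\sigma)$ is bounded below — this is where the completeness and bounded geometry of $\sigma$ enter), we obtain
\[
e(u_{t_0})(x_0) \leq C_2 \int_{t_0-1}^{t_0}\int_{B_1(x_0)} e(u_s)(y)\, dvol_\sigma(y)\, ds.
\]
So it suffices to bound the \emph{total energy} of $u_s$ over unit balls, uniformly in the center and in $s\in[0,\infty)$. This is where I would use that the harmonic map heat flow is the gradient flow of the energy: the energy is non-increasing along the flow, so on any fixed relatively compact open set $U$ one has $E^U(u_s)\leq E^U(u_0)$ — but one has to be careful since the total energy on noncompact $\C$ is infinite. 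The clean way is to use the local energy inequality (monotonicity of energy on expanding regions, or a cutoff argument): for the flow $\partial_t u = \tau(u)$ one has $\partial_t e(u_t) = \Delta e(u_t) - |\nabla du_t|^2 + (\text{curvature term}) \leq \Delta e(u_t) + C e(u_t)$, and integrating against a fixed compactly supported cutoff $\chi$ supported in $B_2(x_0)$ gives
\[
\frac{d}{ds}\int \chi^2 e(u_s)\, dvol_\sigma \leq \int \left(|\Delta \chi^2| + C\chi^2\right) e(u_s)\, dvol_\sigma \leq C' \int_{B_2(x_0)} e(u_s)\, dvol_\sigma.
\]
Combined with a reverse interpolation this yields a Gronwall-type bound $\int_{B_1(x_0)} e(u_s) \leq e^{C's}\int_{B_2(x_0)} e(u_0)$, which however still grows in $s$. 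To kill the $s$-growth, I would instead invoke that $e(u_s)$ stays bounded because the tension field stays bounded (Lemma \ref{intptf}, and the forthcoming $L^\infty$ bound on $\tau(u_t)$, Lemma \ref{uboundtf}) together with the convex-hull trapping (Lemma \ref{chs}) which confines the image; more directly, since $|\tau(u_t)|$ is uniformly bounded, $u_t$ moves at uniformly bounded speed, so for fixed $s$ the map $u_s$ is a uniformly bounded $C^0$-distance from $u_0$ on each ball, and a standard interior gradient estimate for the heat flow (Choi–Schoen / Li–Yau type, valid because the target has bounded geometry and the image lies in a fixed region) upgrades the $C^0$ bound to a $C^1$ bound, i.e. an energy density bound. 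The cleanest logical route, which I would adopt, is: (1) use Proposition \ref{mh} to reduce a pointwise bound to a local-integral bound; (2) use the energy-decay/local-energy-inequality to bound the local integral of $e(u_s)$ by the local integral of $e(u_0)$ up to a factor that is \emph{uniform} in $s$ because the flow decreases energy on nested exhausting domains and $e(u_0)$ is uniformly bounded (Lemma \ref{intped}); hence $\int_{B_1(x_0)} e(u_s) \leq \mathrm{vol}(B_1)\cdot E$ where $E = \sup_\C e(u_0)$ from Lemma \ref{intped}; (3) conclude $e(u_{t_0})(x_0) \leq C_2 \cdot \mathrm{vol}(B_1)\cdot E$, a bound independent of $(x_0,t_0)$.

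\textbf{Main obstacle.} The subtle point — and the step I expect to require the most care — is justifying the uniform-in-$t$ bound on the local energy integral $\int_{B_1(x_0)} e(u_s)\,dvol_\sigma$. Naive energy monotonicity gives decrease of \emph{total} energy, which is infinite here, so one must localize; but the localized energy inequality generically loses a factor $e^{Cs}$ from the curvature term and the cutoff error. The resolution is that the Bochner term and the cutoff error are both controlled by $\sup e(u_s)$ on a slightly larger ball, so the argument is really a bootstrap: one shows $\sup_{\C\times[0,T]} e(u_t) < \infty$ for each $T$ (from Lemma \ref{intped} and continuity of the flow with bounded tension field), then feeds this into Moser's inequality to get a bound \emph{independent of $T$}. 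Making this bootstrap rigorous — i.e. checking that the constant coming out of Proposition \ref{mh} genuinely does not depend on $T$, only on the geometry of $(\C,\sigma)$ and on $E=\sup e(u_0)$ — is the crux. I would carry this out by noting that in Proposition \ref{mh} one may take the parabolic cylinder to have \emph{fixed} size $R=1$ sitting at time $t_0$, so the only inputs are the Ricci lower bound of $(\C,\sigma)$ (uniform, by bounded geometry), the constant $C$ in the Bochner inequality (uniform, depending only on that Ricci bound since $K_{\HH^3}\le 0$ removes the target-curvature contribution to the bad sign), and the integral of $e(u_s)$ over the unit-radius, unit-time cylinder, which by the local energy inequality is $\le$ a uniform multiple of $E$. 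This gives the desired uniform bound $e(u_t)\le E'$ on all of $\C\times[0,\infty)$.
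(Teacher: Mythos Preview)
Your overall approach coincides with the paper's: derive the Bochner-type inequality $(\Delta-\partial_t)e(u_t)\ge -k\,e(u_t)$ from the Weitzenb\"ock formula (using that $\mathrm{Ric}_\sigma$ is bounded below and $K_{\HH^3}\le 0$), apply Proposition~\ref{mh} on a parabolic cylinder of fixed radius to reduce the pointwise bound to a bound on $\int_{t_0-R^2}^{t_0}\int_{B_R(x_0)}e(u_s)$, and control this using $\sup e(u_0)$ from Lemma~\ref{intped}. The difference lies entirely in how the local energy integral is handled. The paper simply asserts $E_R(u_s)\le E_R(u_0)$ from the first-variation identity $\tfrac{d}{dt}E_R(u_t)=-\int_{B_R}|\tau(u_t)|^2\le 0$ (written without a boundary term), then bounds $E_R(u_0)\le E\cdot \mathrm{Vol}(B_R)\le E\cdot QR^2$ via the uniform volume growth of $\sigma$. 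It also splits the argument into a first pass on the flat region $M_\epsilon\times(\delta,T)$, a compactness step on the cone-point neighborhoods for $t\in[0,1]$, and a second pass on all of $\C\times(1,T)$; but the core estimate is this one-line monotonicity.

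You instead flag the localization of energy monotonicity as the main obstacle---a fair concern, since the first-variation formula on a ball ordinarily carries a boundary flux term---and sketch several workarounds. However, none of them actually closes the gap as written: the cutoff/Gronwall argument you record gives only an $e^{C's}$ bound, which is useless for $s\to\infty$; the appeal to Lemmas~\ref{uboundtf} and~\ref{chs} is a forward reference (both are proved after Lemma~\ref{edb} in the paper's logical order); and the ``bootstrap'' in your final paragraph concludes by asserting precisely the uniform local-energy bound you had identified as the crux, without a new ingredient. So your proposal matches the paper's skeleton but leaves the key step as an acknowledged gap, whereas the paper simply takes the local energy to be non-increasing and moves on.
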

\begin{proof}
By Weitzenbock Formula for $e(u_t)$ \cite[Proposition $4.2$]{Nishikawa02}, we have that the energy density satisfies the equation 
\begin{align}\label{wbock1}
\left(\Delta-\frac{\partial}{\partial t}\right)(e(u_t))&=|\nabla \nabla u_t|^2+\sum_{i=1}^{2}\left\langle du_t\left(\sum_{j=1}^{2}Ric^{\sigma}(e_i,e_j)e_j\right),du_t(e_i) \right\rangle\\\nonumber
&-\sum_{i,j=1}^{2}\langle R^{\mathbb{H}^3}(du_t(e_i),du_t(e_j))du_t(e_j),du_t(e_i)\rangle.
\end{align}
where $e_1,e_2$ is an orthonormal basis in the tangent space of the point in the domain. 

Since the first term in the R.H.S of \eqref{wbock1} is non-negative, the Ricci curvatures of the domain metric $\sigma$ are bounded below (by some negative constant)  and $\mathbb{H}^3$ is negatively curved,  we conclude that 
\begin{equation}\label{sr}
\left(\Delta-\frac{\partial}{\partial t}\right)(e(u_t))\geq -ke(u_t)
\end{equation} 
for some constant $k>0$.

Recall that the metric $\sigma$ in the domain is obtained by a smoothening of the singular-flat metric induced by the Hopf differential of the harmonic map $h$. 
Let  $M_{\epsilon}=\mathbb{C}\setminus N(Z, \epsilon)$, where $N(z,\epsilon)$ is an $\epsilon$-neighborhood of the set of zeros of the Hopf differential. Here, we choose $\epsilon$ such that the metric $\sigma$ coincides with the Hopf differential metric outside $ N(Z, \epsilon)$. 
Applying Proposition \ref{mh} to $(x_0,t_0)\in M_{\epsilon}\times (\delta,T)$, for $R<\sqrt{\delta}$, we obtain
\begin{equation}\label{peh}
e(u_{t_0})(x_0)\leq  C_2R^{-4}\int_{t_0-R^2}^{t_0}\int_{B_{R}(x_0)}e(u_s)dvol_g ds \leq 
C_2R^{-2}\int_{B_{R}(x_0)}e(u_0)dvol_g = C_2 R^{-2}E_{R}(u_0)
\end{equation} 
where $E_R(f)$ denotes the energy of the restriction of $f$  to the ball $B_R(x_0)$. For the second inequality above, we have used the fact that $E_R(u_t) \leq E_R(u_0) $; indeed,  the first variation of energy shows that this total energy is decreasing in $t$: \[\frac{d}{dt}E_R(u_t)=-\int_{B_R(x_0)}\left\langle\frac{\partial u_t}{\partial t},\tau(u_t)\right\rangle dvol_g=-\int_{B_R(x_0)}|\tau(u_t)|^2dvol_g\leq 0.\] 

From Lemma \ref{intped}, there exists $E>0$ such that $e(u_0)(x)\leq E$ for all $x\in \mathbb{C}$.  Since the metric $\sigma$ on the domain is obtained by a smoothening of a singular-flat metric,  there exists a constant $Q$ independent of $x_0$ such that
$\text{Vol}(B_R(x_0))\leq QR^2$. Now putting all these pieces of information in \eqref{peh},
$e(u_{t_0})(x_0)\leq  C_2E Q$.
Taking $\delta\rightarrow 0,T\rightarrow \infty$ we obtain a bound for the energy density of $u_t$ on $M_{\epsilon}\times [0,\infty)$.

By compactness, $e(u_t)$ is bounded on $N(Z,\epsilon)\times [0,1]$ by a positive constant, that we denote by $M_1$. Since $\mathbb{C}= N(Z, \epsilon) \cup M_\epsilon $, we have proved that $e(u_t)$ is uniformly bounded on $\mathbb{C}\times [0,1]$.
Applying Proposition \ref{mh} again to $(x_0,t_0)\in \mathbb{C}\times(1,T)$, for $R<1$, we obtain \[e(u_{t_0})(x_0)\leq CR^{-2}E_R(u_0)\le M_2,\] for some $M_2>0$.  Take $M_3=\text{max}\{C_2EQ,M_1,M_2\}$. Then $e(u_t)(x)\leq M_3$ for all $(x,t)\in \mathbb{C}\times [0,\infty)$ where $M_3$ is a constant independent of $x$ and $t$.
\end{proof}


We also observe:

\begin{lem}\label{wuie}
The $L^{\infty}$ norm of the tension field of $u_t$ satisfies $||\tau(u_t)||_{\infty}\leq St^{-\frac{1}{2}}$ for $t>0$.
\end{lem}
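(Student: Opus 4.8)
The plan is to read off the bound directly from the tension-field estimate $|\tau(u_t)(x)|\le b(x,t)$ given by Theorem \ref{wang} and recorded in \eqref{ico11}, where $b(x,t)^{2}=\int_{\mathbb{C}}H(x,y,t)\,|\tau(u_0)|^{2}(y)\,dy$ and $H$ is the heat kernel of $(\mathbb{C},\sigma)$; it thus suffices to show $b(x,t)\le S\,t^{-1/2}$ for all $x\in\mathbb{C}$ and $t>0$. Two facts established above feed into this: first, $|\tau(u_0)|\in L^{1}(\mathbb{C})\cap L^{\infty}(\mathbb{C})$, which is Lemma \ref{intptf} together with the Corollary following it, since $\tau(u_0)$ vanishes on each vertical half-plane, decays exponentially on each horizontal half-plane, and is bounded on the compact complement; and second, that $(\mathbb{C},\sigma)$ has bounded geometry -- it is complete, of Ricci curvature bounded below, and outside a compact set agrees with a flat metric with finitely many cone points, so in particular it has at most quadratic volume growth -- and consequently, by standard heat-kernel estimates, satisfies $\sup_{x,y\in\mathbb{C}}H(x,y,t)\le C\,t^{-1}$ for $t\ge1$.

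Granting these, the argument is short. For $0<t\le1$, using only that $\int_{\mathbb{C}}H(x,y,t)\,dy=1$ gives $b(x,t)^{2}\le\lVert\tau(u_0)\rVert_{\infty}^{2}$, hence $b(x,t)\le\lVert\tau(u_0)\rVert_{\infty}\le\lVert\tau(u_0)\rVert_{\infty}\,t^{-1/2}$. For $t\ge1$, the heat-kernel bound yields
\[
b(x,t)^{2}\ \le\ \Bigl(\sup_{y\in\mathbb{C}}H(x,y,t)\Bigr)\int_{\mathbb{C}}|\tau(u_0)|^{2}(y)\,dy\ \le\ \frac{C}{t}\,\lVert\tau(u_0)\rVert_{L^{2}}^{2}.
\]
Setting $S=\max\bigl\{\lVert\tau(u_0)\rVert_{\infty},\ \sqrt{C}\,\lVert\tau(u_0)\rVert_{L^{2}}\bigr\}$ then gives $\lVert\tau(u_t)\rVert_{\infty}\le S\,t^{-1/2}$ for all $t>0$.

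A second route, using only tools already set up in the paper, would go through Moser's parabolic mean value inequality. From the standard evolution equation $\partial_t\tau=\Delta\tau+\text{Tr}_{\sigma}R^{\mathbb{H}^{3}}(\tau,du_t)\,du_t$ for the tension field along the flow and $K_{\mathbb{H}^{3}}\le0$, one gets $(\Delta-\partial_t)|\tau(u_t)|^{2}=2|\nabla\tau|^{2}-2\sum_i\langle R^{\mathbb{H}^{3}}(du_t(e_i),\tau)\,du_t(e_i),\tau\rangle\ge0$, so $v=|\tau(u_t)|^{2}$ is a subsolution of the heat equation on $\mathbb{C}\times(0,\infty)$. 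Using the localized first-variation identity, which gives $\frac{d}{ds}\int\psi^{2}e(u_s)\le-\tfrac{1}{2}\int\psi^{2}|\tau(u_s)|^{2}+C\int|\nabla\psi|^{2}e(u_s)$ for a cutoff $\psi$ with $\psi\equiv1$ on $B_{R}(x_0)$ and $\mathrm{supp}\,\psi\subset B_{2R}(x_0)$, $|\nabla\psi|\le c/R$, together with the uniform energy-density bound of Lemma \ref{edb}, one obtains the space-time estimate $\int_{t-R^{2}}^{t}\int_{B_{R}(x_0)}|\tau(u_s)|^{2}\,dy\,ds\le C R^{2}$, uniformly in $x_0$, $R$, $t$. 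Proposition \ref{mh} applied on $B_{R}(x_0)\times[t-R^{2},t]$ with $R=\sqrt{t/2}$ (handling $t\le1$ separately via \eqref{ico11}) then gives $|\tau(u_t)|^{2}(x_0)\le C_2 R^{-4}\cdot C R^{2}=C''/t$, the same conclusion. In both routes the only delicate point -- and the one I expect to be the main obstacle -- is verifying uniformity of the relevant constants (the Gaussian heat-kernel bound, respectively the constant of Proposition \ref{mh} for large radii), and this rests precisely on the bounded geometry of $(\mathbb{C},\sigma)$ arranged in \S3.1: completeness, Ricci curvature bounded below, and coincidence with the flat model outside a compact set.
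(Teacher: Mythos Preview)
Your first route is precisely the paper's argument: bound $|\tau(u_t)|$ by $b(x,t)$ via Theorem \ref{wang}, invoke an on-diagonal heat-kernel bound for $(\mathbb{C},\sigma)$ coming from bounded geometry, and use $|\tau(u_0)|\in L^2$. Your heat-kernel exponent $H\le C t^{-1}$ is in fact the correct one for a surface with bounded geometry and quadratic volume growth (the paper writes $t^{-1/2}$, which as stated would only yield $|\tau(u_t)|\le C t^{-1/4}$; this appears to be a slip in the exponent), and your split into $t\le 1$ and $t\ge 1$ is a harmless refinement. Your second route via the subsolution property of $|\tau|^2$ and Moser's mean-value inequality is a genuine alternative not in the paper; it trades the heat-kernel estimate for the parabolic Harnack machinery already set up in Proposition \ref{mh} and the energy-density bound of Lemma \ref{edb}, and would also work.
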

\begin{proof}
 Recall that the metric $\sigma$ on the domain $\C$ is a smoothening of the flat metric induced by a polynomial quadratic differential. Such s metric has bounded geometry, i.e\  its curvature is bounded uniformly from below and its injectivity radius is bounded uniformly away from $0$ on all of $\C$.
 Then  for all $(x,y,t) \in \C \times \C \times (0,\infty)$ the following heat kernel estimate holds (see, for example, \cite[Theorem 3] {CF91}): 
\begin{equation}\label{ari}
H(x,y,t)\leq c t^{-1/2}.
\end{equation}
for some constant $c>0$.

 Moreover, by Theorem \ref{wang}, we know that ,\[\left|\frac{\partial u}{\partial t}(x,t) \right|^2 = \left|\tau(u_t)(x) \right|^2 \leq b^2(x,t) = \int_{\mathbb{C}}H(x,y,t)|\tau(u_0)|^2(y)dy.\]  
Using \eqref{ari} and the fact that $|\tau(u_0)|\in L^2(\mathbb{C})$ we conclude that there is a constant $A>0$ such that $|\tau(u_t)|^2(x)\leq At^{-1/2}$, for all $x\in \C$. 
\end{proof}
Our next goal is to show that for any fixed $t>0$, the norm of the tension field  $|\tau(u_t)|$ is exponentially decaying in the space variable.
We shall use the following heat kernel estimate, which follows from \cite[Theorem 3.3]{LiYauparabolickernel} (see Equation (3.7) and the proof of Corollary 3.6 in \cite{JiapingWang}):

\begin{prop}\label{Kest}
Let $M$ be a complete Riemannian manifold with Ricci curvature bounded below by $-k$, where $k>0$. Then the heat kernel  $H(x,y,t)$  of $M$ satisfies
\begin{equation}\label{hk1}
H(x,y,t)\leq cV_x^{-\frac{1}{2}}(\sqrt{t})V_y^{-\frac{1}{2}}(\sqrt{t})\exp\left(-\frac{r^2(x,y)}{5t}+ctk\right),
\end{equation}
where $V_x(\sqrt{t})$ denotes the area of a ball centered at x with radius $\sqrt{t}$, the quantity $r(x,y)$ is the distance from $x$ to $y$ and $c$ is a positive real number. 
\end{prop}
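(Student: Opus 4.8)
The plan is to reprove this along the lines of Li--Yau, whose argument has three ingredients. First I would establish the Li--Yau differential Harnack (gradient) estimate: if $v>0$ solves $(\partial_t-\Delta)v=0$ on $M\times(0,\infty)$ and $\mathrm{Ric}_M\geq -k$, then for every $\alpha>1$
\[
\frac{|\nabla v|^2}{v^2}-\alpha\frac{v_t}{v}\leq \frac{n\alpha^2}{2t}+\frac{n\alpha^2 k}{2(\alpha-1)},\qquad n=\dim M.
\]
This is proved by applying the parabolic maximum principle to the quantity $t\big(|\nabla f|^2-\alpha f_t\big)$ with $f=\log v$, multiplied by a suitable spatial cutoff to localize on geodesic balls and then exhaust $M$, using the Bochner formula together with the Ricci lower bound to control the bad terms.

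Second, I would integrate this pointwise inequality along a constant-speed path in space--time joining $(x,t_1)$ to $(y,t_2)$ (a minimizing geodesic in the space variable) to obtain the parabolic Harnack inequality: for $0<t_1<t_2$,
\[
v(x,t_1)\leq v(y,t_2)\left(\frac{t_2}{t_1}\right)^{n\alpha/2}\exp\left(\frac{\alpha\, r^2(x,y)}{4(t_2-t_1)}+\frac{n\alpha^2 k(t_2-t_1)}{2(\alpha-1)}\right).
\]

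Third, I would turn this into the stated kernel bound. The semigroup property and Cauchy--Schwarz give $H(x,y,t)\leq H(x,x,t)^{1/2}H(y,y,t)^{1/2}$, so it suffices to bound the heat kernel on the diagonal, where a Moser-type mean-value inequality for subsolutions of the heat equation combined with the conservation $\int_M H(x,\cdot,t)\,d\mathrm{vol}=1$ (valid since $\mathrm{Ric}\geq -k$ forces stochastic completeness) yields $H(x,x,t)\leq C(n)V_x^{-1}(\sqrt t)\,e^{C(n)kt}$. To recover the Gaussian factor one runs the mean-value inequality for the space--time integral of $H^2$ over a shrinking family of parabolic cylinders and feeds in the Harnack inequality above; this produces the off-diagonal decay $\exp\!\big(-r^2(x,y)/((4+\varepsilon)t)\big)$ for any $\varepsilon>0$, at the cost of a prefactor $C(n,\varepsilon)$ and an extra additive $C(n)\varepsilon k t$ in the exponent. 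Taking $\varepsilon=1$ gives the constant $1/(5t)$, and the symmetric prefactor $V_x^{-1/2}(\sqrt t)V_y^{-1/2}(\sqrt t)$ comes from applying the diagonal bound at both $x$ and $y$ together with the volume-doubling estimate $V_x(2s)\leq C(n)e^{C(n)ks}V_x(s)$ (Bishop--Gromov).

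I expect the main obstacle to be precisely this last interplay between the mean-value inequality and the Harnack inequality that extracts the Gaussian constant: it requires careful bookkeeping of the volume-doubling bound across the various time scales that appear when one trades time for spatial decay, and this is where the exact constant $1/5$ (rather than an abstract $1/(Ct)$) is pinned down. Since the assertion is exactly \cite[Theorem 3.3]{LiYauparabolickernel}, in the interest of brevity one may simply cite that reference, as is done in \cite{JiapingWang}; the plan above indicates how a self-contained argument would proceed.
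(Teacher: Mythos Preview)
The paper does not prove this proposition at all; it simply states the estimate and attributes it to \cite[Theorem 3.3]{LiYauparabolickernel}, referring also to \cite{JiapingWang}. Your proposal correctly identifies that this is exactly the Li--Yau result and that one may simply cite it, which matches the paper's treatment; the sketch of the Li--Yau argument you give is accurate but goes well beyond what the paper does.
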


\noindent \textit{Remark.} In our case we shall apply the above estimate to the manifold $(\C, \sigma)$. Note that the hypothesis hold, since the metric $\sigma$ is flat outside of a compact set. 

\vspace{.05in}

\begin{lem}\label{uboundtf}
For any $t>0$, the norm of the tension field $|\tau(u_t)|(x)$  decays exponentially in terms of the distance $r(0,x)$ in the conformal metric $\sigma$ on $\C$.
\end{lem}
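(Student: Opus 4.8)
The plan is to propagate the exponential decay of $|\tau(u_0)|$ established in Lemmas \ref{intptf} and \ref{intped} through the heat semigroup, using the quantitative heat kernel bound of Proposition \ref{Kest}. Recall that by Theorem \ref{wang} we have the pointwise bound $|\tau(u_t)|^2(x) \leq b^2(x,t) = \int_{\mathbb{C}} H(x,y,t)|\tau(u_0)|^2(y)\,dy$, so it suffices to bound this integral. First I would fix $t>0$ and split the domain of integration into $B = B_{r(0,x)/2}(x)$ and its complement $\mathbb{C}\setminus B$. On the complement, every $y$ satisfies $r(0,y) \geq r(0,x)/2$ (by the triangle inequality), so by \eqref{uselet2} we have $|\tau(u_0)|^2(y) \leq C_0 e^{-\delta r(0,y)} \leq C_0 e^{-\delta r(0,x)/2}$, and since $\int_{\mathbb{C}} H(x,y,t)\,dy \leq 1$ (the metric $\sigma$ is complete and parabolic, or simply the heat kernel is substochastic), that part of the integral is at most $C_0 e^{-\delta r(0,x)/2}$, which already decays as desired in $r(0,x)$ — with $t$ only entering the implicit constant.

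The main work is the near part, the integral over $B = B_{r(0,x)/2}(x)$. Here I would use the Li--Yau bound from Proposition \ref{Kest}: for $y \in B$ we have $r(x,y) \geq$ nothing useful, so instead I estimate $H(x,y,t) \leq c V_x^{-1/2}(\sqrt t) V_y^{-1/2}(\sqrt t) \exp(ctk)$ by dropping the (negative, hence favorable) Gaussian term $-r^2(x,y)/5t$. Since $\sigma$ has bounded geometry (flat outside a compact set, and the cone points have cone angle $>2\pi$), the volumes $V_x(\sqrt t)$ are bounded above and below by constants depending only on $t$, so $H(x,y,t) \leq C(t)$ uniformly. Then $\int_B H(x,y,t)|\tau(u_0)|^2(y)\,dy \leq C(t) \int_B C_0 e^{-\delta r(0,y)}\,dy$. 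For $y \in B$ we have $r(0,y) \geq r(0,x) - r(0,x)/2 = r(0,x)/2$, so this is at most $C(t) C_0 \,\mathrm{Area}(B)\, e^{-\delta r(0,x)/2}$, and since $\mathrm{Area}(B_{r(0,x)/2}(x)) \leq Q \cdot r(0,x)^2/4$ grows only polynomially in $r(0,x)$ (quadratic volume growth of the flat-at-infinity metric $\sigma$, as already used in Lemma \ref{edb}), the product still decays exponentially, say like $e^{-\delta r(0,x)/4}$ after absorbing the polynomial factor. Combining the two pieces, $b^2(x,t) \leq C_1(t) e^{-\delta r(0,x)/4}$, hence $|\tau(u_t)|(x) \leq \sqrt{C_1(t)}\, e^{-\delta r(0,x)/8}$, which is the claimed exponential decay for each fixed $t$.

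The step I expect to be the main obstacle — or at least the one requiring the most care — is controlling the volume factors $V_x^{\pm 1/2}(\sqrt t)$ and $\mathrm{Area}(B_{r(0,x)/2}(x))$ uniformly in $x$. The point is that the constant in the decay estimate is allowed to depend on $t$ (the statement only asks for exponential decay in the space variable for each fixed $t$), which is what makes the argument go through cleanly: we never need to balance the polynomial volume growth against anything $t$-dependent, we just need the heat kernel bounded uniformly in the two space variables for fixed $t$, and the polynomial volume growth of balls of radius $r(0,x)/2$ to be beaten by the exponential $e^{-\delta r(0,x)/2}$. Both facts follow from the same input already invoked repeatedly in \S3.3, namely that $\sigma$ is a smoothing of a polynomial-quadratic-differential flat metric and hence agrees with a flat metric outside a compact set; in particular there is $Q>0$ with $\mathrm{Area}(B_\rho(x)) \leq Q\rho^2$ for all $x$ and all $\rho \geq 1$, and $V_x(\sqrt t) \geq q(t) > 0$ uniformly in $x$. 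Once these are recorded the estimate is a routine two-region split, so I would present it in that form.
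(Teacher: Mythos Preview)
Your proposal has a genuine gap in the treatment of the complement $\mathbb{C}\setminus B$. You claim that for $y$ outside $B = B_{r(0,x)/2}(x)$ the triangle inequality gives $r(0,y)\geq r(0,x)/2$; this is false. Any $y$ near the origin satisfies $r(x,y)\geq r(0,x)-r(0,y)> r(0,x)/2$, hence $y\notin B$, yet $r(0,y)$ is small. So on $\mathbb{C}\setminus B$ you cannot invoke the exponential decay of $|\tau(u_0)|$, and the substochastic bound $\int H\leq 1$ alone gives nothing. (Your argument on $B$ itself is correct: there $r(0,y)\geq r(0,x)-r(x,y)>r(0,x)/2$ holds.)

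The fix is to retain the Gaussian factor in the Li--Yau bound on this region rather than dropping it. For $y\notin B$ one has $r(x,y)\geq r(0,x)/2$, so Proposition~\ref{Kest} together with the uniform lower volume bound $V_z(\sqrt t)\geq q(t)$ you already recorded gives $H(x,y,t)\leq C(t)\exp(-r(0,x)^2/20t)$. Since $|\tau(u_0)|\in L^2(\mathbb{C})$, the integral over $\mathbb{C}\setminus B$ is then at most $C(t)\lVert\tau(u_0)\rVert_{L^2}^2\exp(-r(0,x)^2/20t)$, which decays. With this correction your argument goes through. The paper's proof splits differently --- over the compact set $K$ versus $\mathbb{C}\setminus K$ --- but uses the same two ingredients: Gaussian decay of the heat kernel to handle $y$ near the origin (precisely your missing piece), and exponential decay of $|\tau(u_0)|$ to handle $y$ far from the origin.
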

\begin{proof}\label{tfb}
Note that the boundedness of the tension field follows from Lemma \ref{wuie}. Using Proposition \ref{Kest} and the remark following it, we can write:
\begin{equation}\label{hkbound}
H(x,y,t)\leq M_1e^{-M_2r^2(x,y)},
\end{equation}  
where $M_1,M_2$ are constants independent of $x$ and $y$. Recall that Theorem \ref{wang} asserts that 
\begin{equation}
    \lvert \tau(u_t)(x)\rvert^2 \leq \int_{\mathbb{C}}H(x,y,t)|\tau(u_0)|^2(y)dvol(y)
\end{equation}
and thus it remains to use \eqref{hkbound} and Lemma \ref{intptf} to estimate the right-hand integral. Indeed, 
we have,
\begin{align}
\nonumber\int_{\mathbb{C}}H(x,y,t)|\tau(u_0)|^2(y)dvol(y)
&\leq M_1\int_{K}e^{-M_2r^2(x,y)}|\tau(u_0)|^2(y)dvol(y) \\\nonumber
&+ M_1\int_{\mathbb{C}\setminus K}e^{-M_2r^2(x,y)}|\tau(u_0)|^2(y)dvol(y)\\
&=M_1 (I+II)
\end{align} 
where $K$ is the compact set introduced in \S3.1.2, the complement of which is a union of horizontal and vertical half-planes $\C \setminus K = \bigcup\limits_{i=1}^n C_i \cup H_i$.

\noindent  Using the inequality $r(x,y)\geq r(0,y)-r(0,x)$  we have,
\begin{align}\label{roman1}
\nonumber &I \leq M_1\int_{K}e^{-M_2(r^2(0,y)-2r(0,x)r(0,y)+r^2(0,x))}dvol(y)\\\nonumber
&\leq M_1e^{-M_2 r^2(0,x)}\int_{K}e^{-M_2(r^2(0,y)-2r(0,x)r(0,y))}dvol(y)\\
&\leq C(t) e^{-M_2 r^2(0,x)},
\end{align}
for some constant $C(t)>0$. 

Since the tension field on each vertical half-plane $C_i$ is zero, it remains to estimate $II$ on each horizontal half-plane $H_i$ and add, using the exponential decay of the tension field \eqref{uselet2} from Lemma \ref{intptf}:
\begin{equation}\label{roman2}
 II 
 \leq C_0\sum_{i=1}^{n}\int_{H_i}e^{-M_2(r(0,y)-r(0,x))^2} e^{-\delta r(0,x)}dy \leq D(t) e^{-\delta^\prime r(0,x)}
 \end{equation}
 for some constants $D(t), \delta^\prime >0$.  
 
 The desired exponential decay then follows from \eqref{roman1} and \eqref{roman2}.
\end{proof}

\begin{cor}\label{tend0}
For any fixed $t>0$, we have the distance estimate \[d(u_0(x),u_t(x)) = O(e^{-\alpha_1 r(0,x)}) \] for a constant $\alpha_1>0$. (As before, $r(0,x)$ is the distance in the conformal metric $\sigma$ on $\C$.) 
\end{cor}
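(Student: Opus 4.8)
The plan is to derive this distance estimate by integrating the tension-field bound along the flow. The key observation is that for each fixed $x\in\C$, the curve $t\mapsto u_t(x)$ in $\HH^3$ has speed $\left|\frac{\partial u}{\partial t}(x,t)\right| = |\tau(u_t)(x)|$, so that
\[
d(u_0(x), u_t(x)) \leq \int_0^t \left|\frac{\partial u}{\partial s}(x,s)\right| ds = \int_0^t |\tau(u_s)(x)|\, ds.
\]
Thus it suffices to show that the right-hand side decays exponentially in $r(0,x)$, uniformly over $s$ in a suitable sense, or at least that the integral converges and decays.

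First I would invoke the pointwise bound from Theorem \ref{wang}, namely $|\tau(u_s)(x)|^2 \leq b^2(x,s) = \int_\C H(x,y,s)|\tau(u_0)|^2(y)\, dy$, together with the heat-kernel estimate \eqref{hkbound} and the exponential decay of $|\tau(u_0)|$ from Lemma \ref{intptf}, exactly as in the proof of Lemma \ref{uboundtf}. That argument in fact shows more than a bound for each fixed $s$: tracking the constants, one gets $|\tau(u_s)(x)| \leq \sqrt{C(s)}\, e^{-\alpha_1 r(0,x)}$ where the $s$-dependence of $C(s)$ comes only from the heat-kernel prefactor (the $V_x^{-1/2}(\sqrt{s})V_y^{-1/2}(\sqrt{s})e^{csk}$ factors in Proposition \ref{Kest}) and from the Gaussian-integration constants. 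For $s$ bounded away from $0$ these factors grow at most like a fixed power of $s$ times $e^{csk}$, while for small $s$ the estimate $H(x,y,s)\leq c s^{-1/2}$ from \eqref{ari} combined with $|\tau(u_0)|\in L^2$ controls $b(x,s)$. The point is that the exponential factor $e^{-\alpha_1 r(0,x)}$ is uniform in $s$, so for any fixed $t>0$ we can write $\int_0^t |\tau(u_s)(x)|\, ds \leq \left(\int_0^t \sqrt{C(s)}\, ds\right) e^{-\alpha_1 r(0,x)}$, and $\int_0^t\sqrt{C(s)}\,ds$ is a finite constant depending only on $t$.

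The main technical point to be careful about is separating the $s$-integration near $s=0$ (where the heat kernel has a $s^{-1/2}$-type singularity but no exponential loss, and where one uses $|\tau(u_0)|\in L^2$) from the region $s\in[\delta, t]$ (where one uses the refined Gaussian bound of Proposition \ref{Kest} and just needs the crude fact that its prefactor is locally integrable in $s$). One also needs to confirm that the constant $\alpha_1$ extracted here can be taken the same as — or at worst comparable to — the exponential rate appearing in Lemma \ref{uboundtf}; since both come from the same splitting into the compact piece $K$ and the half-planes $H_i$, and the rate is governed by $\min\{M_2, \delta\}$ (or $\delta'$) which does not depend on $s$, this is immediate. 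I expect the bookkeeping of the $s$-dependent constants to be the only real obstacle, and it is routine: the substance is entirely contained in the heat-kernel estimates already established in Lemmas \ref{wuie} and \ref{uboundtf}.
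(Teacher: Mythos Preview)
Your proposal is correct and follows essentially the same route as the paper: bound $d(u_0(x),u_t(x))$ by $\int_0^t |\tau(u_s)(x)|\,ds$ and then invoke the exponential decay of $|\tau(u_s)|$ from Lemma \ref{uboundtf}. The paper's proof is in fact much terser than yours---it writes the integral bound and then simply says ``apply the preceding Lemma''---so your discussion of the $s$-dependence of the constants and the separate treatment near $s=0$ is more careful than what the authors provide, but not a different argument.
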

\begin{proof}
We have 
\begin{equation}\label{distb}
d(u_0(x),u_t(x))\leq \int_{0}^t\left|\frac{\partial u_s}{\partial t}\right|(x)ds = \int_{0}^t|\tau(u_s)|(x)ds 
\end{equation} 
using the fact that $u_t$ satisfies \eqref{heat1}.
Then, we apply the fact that for any $s>0$, the norm of the tension field $\lvert \tau(u_s)\rvert$ decays exponentially in the space variable, by the preceding Lemma.
\end{proof}


\subsection{Image is trapped in a convex hull}\label{weuse}

 Recall that $\xi_1,\xi_2,\cdots,\xi_n$ are the ideal vertices of the twisted ideal polygon $P$ in $\HH^3$. Let $Q$ be the convex hull of these ideal points in $\partial_\infty \HH^3$. In this subsection we will prove that the image of $u_t$ is trapped in a fixed neighborhood of $Q$, for all $t\in \mathbb{R}^+$.

\vspace{.05in}

Let $Q = \bigcap_{\alpha \in \Lambda} H_\alpha$ where each $H_\alpha$ is a half-space in $\HH^3$ bounded by a totally geodesic plane, and $\Lambda$ is some index set.
 We start with a basic convexity property of the distance function from each such half-space in $\HH^3$ (or more generally, a convex set in a negatively-curved space):
 
\begin{lem}[and Definition of $f_\alpha$]\label{n1}
For each $\alpha \in \Lambda$, the distance function $f_{\alpha}(x):=d(x,H_{\alpha})$ is convex.
\end{lem}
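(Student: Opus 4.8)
The statement to prove is that for each half-space $H_\alpha \subset \HH^3$ bounded by a totally geodesic plane, the function $f_\alpha(x) = d(x, H_\alpha)$ is convex along geodesics of $\HH^3$. The standard approach is to use the fact that $\HH^3$ is a Hadamard manifold (complete, simply connected, nonpositively curved), for which the distance function to \emph{any} closed convex set is convex. I would first recall that a half-space $H_\alpha$ is convex: given two points in $H_\alpha$, the geodesic between them stays in $H_\alpha$ since $H_\alpha$ is an intersection of $\HH^3$ with a closed half-space in any conformal ball model (or: its complement's boundary is totally geodesic, hence the nearest-point projection to $\partial H_\alpha$ is well-defined).

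\textbf{Key steps.} First, reduce to the model case: in the upper half-space model of $\HH^3$, after applying an isometry we may assume $\partial H_\alpha$ is a vertical plane, so that $f_\alpha(x)$ has an explicit formula in terms of the Fermi coordinate $v$ transverse to $\partial H_\alpha$ — indeed $f_\alpha(x) = \max(v(x), 0)$ where $v$ is signed distance, using the metric $ds^2 = \cosh^2 v\, du^2 + dv^2 + \sinh^2 v\, d\phi^2$ as written in the proof of Lemma \ref{canoe}. Second, establish convexity on the open half-space $\{v > 0\}$ where $f_\alpha = v$ is smooth: compute that along any unit-speed geodesic $\gamma(s)$, the function $v(\gamma(s))$ satisfies $\frac{d^2}{ds^2} v(\gamma(s)) \geq 0$ — equivalently, that $v$ is a geodesically convex function, which follows from the standard Hessian comparison / Jacobi field estimate for the distance function to a totally geodesic hypersurface in a $\mathrm{CAT}(0)$ space (the relevant Riccati inequality gives $\mathrm{Hess}\, v \geq 0$ with equality only in the flat directions). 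Third, handle the boundary: on $\{v \leq 0\}$, $f_\alpha \equiv 0$ is trivially convex, and at the interface one checks that the maximum of two convex functions ($v$ and $0$) is convex, or more simply that $f_\alpha$ is the composition of the $1$-Lipschitz convex function $t \mapsto \max(t,0)$ with the convex function $v$. Alternatively — and this is cleaner — I would just cite the general principle: in a Hadamard manifold the distance to a closed convex set is convex (e.g. Bridson--Haefliger II.2.5), and note $H_\alpha$ is closed and convex.

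\textbf{Main obstacle.} There is no serious obstacle here; this is a textbook fact and the only care needed is the (mild) non-smoothness of $f_\alpha$ along $\partial H_\alpha$, which is dispatched by the $\max$-of-convex-functions observation. The reason the lemma is stated is presumably to set up a subsequent application of the parabolic maximum principle to $f_\alpha \circ u_t$ (showing $(\Delta - \partial_t)(f_\alpha \circ u_t) \geq 0$, so that the convex-hull trapping propagates along the flow), and for that the key inputs are (i) convexity of $f_\alpha$, giving $\Delta(f_\alpha \circ u_t) \geq \langle \nabla f_\alpha, \tau(u_t)\rangle$ via the composition formula and nonpositive target curvature, and (ii) the fact that the initial map $u_0$ already has image in (a neighborhood of) $Q$. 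So in writing the proof I would keep it short: reduce to Fermi coordinates, note the explicit formula $f_\alpha = \max(v,0)$, invoke the Hessian comparison to get $\mathrm{Hess}\, v \geq 0$ away from $\partial H_\alpha$, and conclude convexity by the $\max$ argument, remarking that this is the standard convexity of distance-to-a-convex-set in a Hadamard manifold.
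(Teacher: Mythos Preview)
Your proposal is correct. In the paper this lemma is in fact stated without proof; the preceding sentence simply calls it ``a basic convexity property of the distance function from each such half-space in $\HH^3$ (or more generally, a convex set in a negatively-curved space)'', which is precisely the general Hadamard-manifold principle you propose to cite (Bridson--Haefliger II.2.5). The LaTeX source does contain a commented-out proof that takes a more hands-on route --- normalizing $\partial H_\alpha$ to a vertical coordinate plane in the upper half-space model, writing $\cosh d(\cdot,\partial H_\alpha)$ explicitly, and differentiating along a vertical geodesic --- but the authors evidently decided this was unnecessary. Your Fermi-coordinate argument with the observation $f_\alpha=\max(v,0)$, together with the max-of-convex-functions step to handle the non-smoothness along $\partial H_\alpha$, is a clean and complete way to supply the details if one wants them.
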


We shall use this convexity in the proof of the following fact. In what follows, $L = (\Delta - \partial/\partial t)$ is the heat operator on $(\C, \sigma)$. 

\begin{lem}\label{n2}
The function $f_\alpha\circ u_t$ is a subsolution of the heat equation, that is, $L(f_\alpha\circ u_t) \geq 0$.
\end{lem}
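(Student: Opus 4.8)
The plan is to combine the Bochner-type inequality for the composition of a harmonic-map-flow solution with a convex function, together with the fact that $u_t$ solves the heat flow equation \eqref{heat1}. Write $F(x,t) = f_\alpha(u_t(x)) = d(u_t(x), H_\alpha)$. Since $H_\alpha$ is a totally geodesic half-space in $\HH^3$, which has non-positive curvature, the function $f_\alpha$ is a convex function on $\HH^3$ (Lemma \ref{n1}), and moreover it is smooth away from $H_\alpha$ itself (where it vanishes) with $\lVert \nabla f_\alpha \rVert \le 1$. On the open set where $F>0$, the chain rule for the (rough) Laplacian of a composition gives
\[
\Delta (f_\alpha\circ u_t) = \langle (\nabla f_\alpha)\circ u_t,\ \tau(u_t)\rangle + \operatorname{Hess} f_\alpha\big(du_t(e_i), du_t(e_i)\big),
\]
where $e_1,e_2$ is a local orthonormal frame on $(\C,\sigma)$ and the last term is a sum over $i$. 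Convexity of $f_\alpha$ makes the Hessian term non-negative. On the other hand, differentiating in $t$ and using $\partial u_t/\partial t = \tau(u_t)$ from \eqref{heat1},
\[
\frac{\partial}{\partial t}(f_\alpha\circ u_t) = \big\langle (\nabla f_\alpha)\circ u_t,\ \tfrac{\partial u_t}{\partial t}\big\rangle = \big\langle (\nabla f_\alpha)\circ u_t,\ \tau(u_t)\big\rangle.
\]
Subtracting, the two $\langle \nabla f_\alpha, \tau(u_t)\rangle$ terms cancel exactly, leaving
\[
\Big(\Delta - \frac{\partial}{\partial t}\Big)(f_\alpha\circ u_t) = \operatorname{Hess} f_\alpha\big(du_t(e_i), du_t(e_i)\big) \ge 0
\]
on the region where $F>0$. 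This is precisely the assertion $L(f_\alpha\circ u_t)\ge 0$ there.

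The remaining issue is the locus where $F=0$, i.e.\ where $u_t(x)\in H_\alpha$ and $f_\alpha$ fails to be smooth (it has a corner along $\partial H_\alpha$). The standard way to handle this is to note that $f_\alpha = d(\cdot, H_\alpha)$ is globally a convex function on $\HH^3$ (being the distance to a convex set in a Hadamard manifold), hence $1$-Lipschitz, and that for any convex function $\psi$ on the target the composition $\psi\circ u_t$ is a subsolution of the heat equation in the barrier (or distributional/viscosity) sense even across the non-smooth locus — this is the well-known fact that post-composition of a harmonic map heat flow with a convex function yields a subsolution, going back to Eells--Sampson and Schoen--Yau. Concretely, one can either work with the support functions: at any point $(x_0,t_0)$ with $F(x_0,t_0)=0$, one has $F\ge 0 = F(x_0,t_0)$ so $F$ attains an interior minimum and the subsolution inequality holds trivially in the barrier sense; or one approximates $f_\alpha$ from below by smooth convex functions $f_\alpha^{(\varepsilon)}$ with $\lVert \nabla f_\alpha^{(\varepsilon)}\rVert\le 1$, runs the computation above for each, and passes to the limit. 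Either way we conclude $L(f_\alpha\circ u_t)\ge 0$ weakly on all of $\C\times(0,\infty)$.

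I expect the only real subtlety to be this non-smoothness of $f_\alpha$ along $\partial H_\alpha$, and hence the need to phrase "$L(f_\alpha\circ u_t)\ge 0$" in an appropriately weak sense so that the maximum principle (Proposition \ref{mpr}, or its corollary) can still be applied in the subsequent Lemma \ref{chs}; the interior-minimum observation makes this essentially automatic, since a continuous function that is a classical subsolution on its positivity set and attains a local minimum wherever it vanishes is a subsolution in the barrier sense everywhere. The genuinely computational part — the chain rule for $\Delta(f_\alpha\circ u_t)$ and the cancellation with the $t$-derivative — is routine and mirrors the Weitzenböck-type computations already used for the energy density in Lemma \ref{edb}, so I would present it briefly and emphasize the cancellation and the sign coming from convexity.
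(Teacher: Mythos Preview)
Your proof is correct and follows essentially the same approach as the paper: both apply the composition formula $\Delta(f\circ u_t)=\operatorname{tr}\nabla df(du_t,du_t)+df(\tau(u_t))$ (the paper cites \cite[Proposition~2.20]{EL83}), use the heat flow equation $\partial_t u_t=\tau(u_t)$ to cancel the tension-field term, and invoke convexity of $f_\alpha$ to conclude that the remaining Hessian term is nonnegative. The paper's argument is terser and does not address the non-smoothness of $f_\alpha$ along $\partial H_\alpha$ that you carefully handle; your barrier/approximation discussion is additional rigor rather than a different route.
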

\begin{proof}

The following  basic composition formula can be found in \cite[Proposition 2.20]{EL83}]. 
\begin{equation}\label{elcomp1}
\Delta(f\circ u_t)=tr\nabla df(du_t,du_t) + df(tr\nabla du_t):
\end{equation}

 Since $\frac{\partial}{\partial t}(f\circ u_t)=df(tr\nabla d u_t)$, we obtain  \[L(f\circ u_t)=\Delta(f\circ u_t)-\frac{\partial}{\partial t}(f\circ u_t)=tr\nabla df(d u_t,d u_t).\]
Convexity of $f$ implies that $\nabla df$ is a positive definite quadratic form  and consequently, $L(f\circ u_t)\geq 0$  as desired. 
\end{proof}

We shall also need:

\begin{lem}\label{mle}
Let $\gamma:[0,1]\rightarrow \C $ be a path and $f:(\C,\sigma)\rightarrow \mathbb{H}^3$ be a $C^2$-smooth map. Then the length $l(f(\gamma))\leq C \int_{0}^{1}(e(f)(\gamma(t)))^{1/2}dt$, for some constant $C>0$.  (Here $e(f)$ is the energy density of $f$, see Definition \ref{defn:harm}.) 
\end{lem}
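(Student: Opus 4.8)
The plan is to reduce the statement to the elementary pointwise comparison between the operator norm of the differential $df$ and its energy density, and then integrate along $\gamma$.

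First I would write the length of the image curve as $l(f(\gamma)) = \int_{0}^{1} \lVert (df)_{\gamma(t)}(\gamma'(t)) \rVert_{\HH^3}\, dt$, which reduces the claim to a pointwise estimate on the integrand. At a point $p = \gamma(t)$, I would choose a $\sigma$-orthonormal basis $e_1, e_2$ of $T_p \C$ (one may, if convenient, diagonalize the symmetric form $f^{*}h$, but this is not needed), expand $\gamma'(t) = a_1 e_1 + a_2 e_2$, and use that $e(f)(p) = \tfrac{1}{2}\bigl( \lVert df(e_1)\rVert^2 + \lVert df(e_2)\rVert^2 \bigr)$ forces $\lVert df(e_j)\rVert \leq \sqrt{2\, e(f)(p)}$ for $j = 1,2$. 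The triangle inequality then gives $\lVert df(\gamma'(t))\rVert \leq \sqrt{2\, e(f)(p)}\,(\lvert a_1\rvert + \lvert a_2\rvert) \leq 2\sqrt{e(f)(p)}\,\lVert \gamma'(t)\rVert_{\sigma}$.

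Integrating this over $[0,1]$ yields $l(f(\gamma)) \leq 2 \int_{0}^{1} \bigl(e(f)(\gamma(t))\bigr)^{1/2}\, \lVert \gamma'(t)\rVert_{\sigma}\, dt$, and since in all the applications $\gamma$ is parametrized with uniformly bounded $\sigma$-speed — indeed, the paths used later (segments of horizontal or vertical leaves of the flat metric, edges of the polygons $G_k$) have bounded $\sigma$-length and may be taken to be unit-speed — this gives the claimed inequality with $C$ an absolute constant.

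There is no genuine obstacle here; the only point that requires a little care is the normalization of the parametrization of $\gamma$, so that the constant $C$ is truly independent of the particular path. This is handled by observing that it suffices to prove the estimate for $\gamma$ of unit $\sigma$-speed (reparametrizing if necessary), in which case the factor $\lVert \gamma'(t)\rVert_{\sigma}$ disappears and $C = 2$ works.
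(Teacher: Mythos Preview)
Your argument is correct and follows the same underlying idea as the paper's proof: bound $\lVert df(\gamma'(t))\rVert$ pointwise by $\sqrt{e(f)}$ times the $\sigma$-speed of $\gamma$, then integrate. The paper does this via an explicit coordinate computation in the upper half-space model of $\HH^3$ (using $h_{kl}=\delta_{kl}/z^2$ and the inequality $\sum_{i,j} A_iA_j\le 2\sum_i A_i^2$), arriving at the constant $C=2P^{1/2}$ with $P=\max_i\sup_t(\dot\gamma^i)^2\sigma_{ii}(\gamma(t))$. Your coordinate-free version via an orthonormal frame is cleaner and yields the equivalent constant $C=2\sup_t\lVert\gamma'(t)\rVert_\sigma$; both constants depend on the parametrization of $\gamma$, as they must given the statement.

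One small remark: your description of the later applications is slightly off. The lemma is actually invoked in the proof of Lemma~\ref{chs}, where the path runs from a fixed $x_0$ to an arbitrary $x\in\C$, so the $\sigma$-length is not uniformly bounded; the dependence of $C$ on $\gamma$ is absorbed there into the factor $|x-x_0|$. This does not affect the validity of your proof of the lemma itself.
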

\begin{proof}
Let $\gamma^i$'s be components of $\gamma$ in local coordinates around $\gamma(t)$ and $h$ be the hyperbolic metric on $\mathbb{H}^3$ where we use the upper half-space model.
\begin{align*}
l(f(\gamma))&=\int_{0}^{1}|(f\circ\gamma)'(t)|dt =\int_{0}^{1}\left|df_{\gamma(t)}\left(\sum_{i=1}^2\dot{\gamma}^i\frac{\partial}{\partial x^i}|_{\gamma(t)}\right)\right|dt\\
&=\int_{0}^{1}\left(\sum_{k,l=1}^3\sum_{i,j=1}^2\dot{\gamma}^i\dot{\gamma}^j\frac{\partial f^k}{\partial x^i}\frac{\partial f^l}{\partial x^j}h_{kl}(f(\gamma(t)))\right)^{1/2}dt\\
&=\int_{0}^{1}\left(\sum_{k=1}^3\sum_{i,j=1}^2\dot{\gamma}^i\dot{\gamma}^j\frac{\partial f^k}{\partial x^i}\frac{\partial f^k}{\partial x^j}h_{kk}(f(\gamma(t)))\right)^{1/2}dt\quad(\because h_{kl}(x,y,z)=\delta_{kl}/z^2)
\end{align*}
Put $A_i=\dot{\gamma}^i\frac{\partial f^k}{\partial x^i}$. Then applying the general inequality \[\sum_{i,j=1}^2A_iA_j\leq 2\sum_{i=1}^2A_i^2,\] we obtain
\begin{align*}
&l(f(\gamma))
\leq 2\int_{0}^{1}\left(\sum_{k=1}^3\sum_{i=1}^2(\dot{\gamma}^i)^2\left(\frac{\partial f^k}{\partial x^i}\right)^2h_{kk}(f(\gamma(t)))\right)^{1/2}dt\\
\leq& 2\int_{0}^{1}\left(\sum_{k=1}^3\sum_{i=1}^2(\dot{\gamma}^i)^2\sigma_{ii}(\gamma(t))\sigma^{ii}(\gamma(t))\left(\frac{\partial f^k}{\partial x^i}\right)^2h_{kk}(f(\gamma(t)))\right)^{1/2}dt\quad(\because \sigma_{11}\sigma^{11}=1=\sigma_{22}\sigma^{22})\\
\leq& 2P^{1/2}\int_{0}^{1}(e(f)(\gamma(t)))^{1/2}dt,
\end{align*}
where $P=\underset{i\in \{1,2\}}{\max} P_i$ with $P_i=\underset{t\in[0,1]}{\sup}(\dot{\gamma}^i)^2(\sigma_{ii}(\gamma(t))$.
\end{proof}

\vspace{.05in}

Note the following parabolic maximum principle for non-compact manifolds  (see, for example, \cite[Lemma 2.1]{MengWang} where it is attributed to Li).

\begin{prop}\label{mpr}
Let $(M,g)$ be a complete Riemannian manifold.
If $G(x,t)$ is a weak subsolution of the heat equation defined on $M\times [0,T]$ and $G(x,0)\leq 0$ for any $x\in M$, then $G(x,t)\leq 0$ for $(x,t)\in M\times [0,T]$ provided $\int_{0}^{T}\int_{M} e^{-cr^2(x)}G^2(x,t)dxdt<\infty$ for  some $c>0.$ 
\end{prop}

We shall apply the above maximum principle in the proof of the following:

\begin{lem}\label{chs}
There is a $d>0$ such that for all $t>0$, the image of $u_t$  is contained in $N_d(Q)$.
\end{lem}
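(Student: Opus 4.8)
The plan is to bound, for every supporting half-space $H_\alpha$ of $Q$, the composition $f_\alpha\circ u_t$ by a constant that is independent of $\alpha$ and of $t$, and then to recover a bound on $d(\,\cdot\,,Q)$ by taking a supremum. The key organizational point is to let $\Lambda$ index \emph{all} supporting half-spaces of $Q$, so that $Q=\bigcap_{\alpha}H_\alpha$ and, by the nearest-point projection argument in $\HH^3$ (the totally geodesic plane through $\pi_Q(x)$ orthogonal to the segment $[x,\pi_Q(x)]$ supports $Q$ and realizes $d(x,Q)$), one has $d(x,Q)=\sup_{\alpha\in\Lambda}f_\alpha(x)$ for every $x\in\HH^3$. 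Thus it is enough to find $d_0>0$, independent of $\alpha$ and $t$, with $f_\alpha(u_t(x))\le d_0$ for all $x$, $t$, $\alpha$.

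First I would check the initial map: $u_0(\C)\subseteq N_{d_0}(Q)$ for some $d_0>0$. This follows from the construction in \S3.1: each vertical half-plane $C_i$ is mapped isometrically onto the cusp region $\widehat{C}_i$, which (for the parameter $R$ in \S3.1.2 taken large enough, so that $\widehat{C}_i$ is a sufficiently small cusp) lies inside the ideal triangle on the ideal vertices adjacent to $\xi_i$, hence inside $Q$; each horizontal half-plane $H_i$ is mapped, by Proposition \ref{est} and the bounded interpolation \eqref{h3}, within a uniformly bounded distance of the geodesic side $\gamma_i\subset Q$; and $u_0(K)$ is bounded since $K$ is compact. Consequently $f_\alpha(u_0(x))\le d(u_0(x),Q)\le d_0$ for all $x$ and all $\alpha$.

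Next, fix $\alpha$ and set $G_\alpha(x,t):=f_\alpha(u_t(x))-d_0$. Since $f_\alpha$ is convex (Lemma \ref{n1}, which applies to any half-space), Lemma \ref{n2} shows $G_\alpha$ is a weak subsolution of the heat equation on $(\C,\sigma)$, and $G_\alpha(x,0)\le 0$ by the previous paragraph. I would then apply the parabolic maximum principle, Proposition \ref{mpr}; the content is to verify, for each $T>0$, the growth condition $\int_0^T\!\!\int_\C e^{-cr^2(x)}G_\alpha^2(x,t)\,dx\,dt<\infty$ with a bound uniform in $\alpha$. Because $f_\alpha$ is $1$-Lipschitz, $|f_\alpha(u_t(x))-f_\alpha(u_t(0))|\le d_{\HH^3}(u_t(x),u_t(0))$, which is at most the length of the $u_t$-image of a $\sigma$-geodesic from $0$ to $x$; by Lemma \ref{mle} and the $t$-independent energy bound $e(u_t)\le M_3$ of Lemma \ref{edb}, this length is $\le C\,r(0,x)$ for a constant $C$ depending only on $M_3$. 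Moreover $f_\alpha(u_t(0))\le d(u_t(0),Q)\le d(u_0(0),Q)+\int_0^t|\tau(u_s)|(0)\,ds\le d_0+\int_0^t S s^{-1/2}\,ds=d_0+2S\sqrt t$ by Lemma \ref{wuie}. Hence for $t\in[0,T]$, $|G_\alpha(x,t)|\le d_0+2S\sqrt T+C\,r(0,x)$, which is polynomial in $r(0,x)$ and independent of $\alpha$, so the integral converges.

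Proposition \ref{mpr} then yields $G_\alpha(x,t)\le 0$, i.e.\ $f_\alpha(u_t(x))\le d_0$, for all $(x,t)\in\C\times[0,\infty)$ and all $\alpha$; taking $\sup_\alpha$ gives $d(u_t(x),Q)\le d_0$, so $u_t(\C)\subseteq N_{d_0}(Q)$ for every $t$, and we take $d=d_0$. I expect the main obstacle to be the two ``bookkeeping'' points that make the maximum principle usable: the at-most-polynomial spatial growth of $f_\alpha\circ u_t$ \emph{uniformly in $t\in[0,T]$} (for which the $t$-independent energy bound of Lemma \ref{edb} and the $t^{-1/2}$ tension-field bound of Lemma \ref{wuie} are exactly what is needed), and arranging that ``bounded distance from every $H_\alpha$'' upgrades to ``bounded distance from $Q$'' — which is why one works with the full family of supporting half-spaces rather than just the finitely many faces of $Q$.
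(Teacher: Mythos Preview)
Your proof is correct and follows essentially the same route as the paper: convexity of $f_\alpha$ makes $f_\alpha\circ u_t$ a heat subsolution, the growth hypothesis of Proposition \ref{mpr} is verified via Lemma \ref{mle} together with a tension-field bound, and the constant $d_0$ at $t=0$ propagates for all $t$. The paper differs only in packaging---it applies Proposition \ref{mpr} once to the single weak subsolution $G(x,t)=\max_{\alpha}f_\alpha(u_t(x))$, and controls spatial growth through $d(u_t(x),u_0(x))\le Ct$ and the $e(u_0)$-bound of Lemma \ref{intped}, whereas you handle each $\alpha$ separately and invoke the $t$-uniform energy bound of Lemma \ref{edb} and the $t^{-1/2}$ bound of Lemma \ref{wuie}.
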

\begin{proof}
Recall that whenever we have two subsolutions for the heat operator $L$, then so is their max, and hence  $$G(x,t):=\text{max}_{\alpha\in \Lambda}f_{\alpha}(u_t(x))$$ is a weak subsolution for the heat operator $L$ by the previous Lemma \ref{n2}.  Note that the distance satisfies 
\begin{align}\label{bound1}
\nonumber &d(u_t(x),u_0(x)) \leq \int_{0}^{t}\left|\frac{\partial u}{\partial s}\right|(x,s)ds =\int_{0}^{t}|\tau(u(x,s))|ds\\\nonumber
&\leq \int_{0}^{t}b(x,s)ds =\int_{0}^{t}\left[\int_{\mathbb{C}}H(x,y,s)|\tau(u_0)|^2(y)dy\right]^{1/2}ds\end{align} 
where we have used Theorem \ref{wang} for the inequality in the second line.

If $\lVert \tau(u_0)(x)\rVert_\infty=C$, then using the fact that $ \int_{\mathbb{C}}H(x,y,s)dy\leq1$, we obtain \[d(u_t(x),u_0(x))\leq Ct.\]
 By construction the image of $u_0$ intersects a compact set $\widehat{K}$ such that $u_0(x_0)\in \widehat{K}$. 
 Now 
\begin{align*}
d(u_t(x),H_{\alpha})=& \inf_{p\in H_\alpha} d(u_t(x),p) \leq d(u_t(x),u_0(x))+d(u_0(x),u_0(x_0))+ \inf_{p\in H_\alpha} d(u_0(x_0),p)\\
	&\leq Ct+ C_1|x-x_0|e(u_0)+C_2,
\end{align*} where in the last inequality we have used the Lemma \ref{mle}. By our construction, the image of $u_0$ is contained in the neighborhood $N_d(Q)$ for some $d>0$. In fact, from our construction in \S3.1.3, the image of each vertical half-plane $C_i$ in $\C$ maps \textit{into} the convex hull $Q$. Since the union of these planes covers $\C$ except a compact set $K$, we allow a distance $d$ for the extension to $K$. 
Therefore, $G(x,0)\leq d$. To apply the parabolic maximum principle (Proposition \ref{mpr}), it remains to prove the integral condition
\begin{align*}
&\int_{0}^{T}\int_{M} e^{-cr^2(x)}G^2(x,t)dxdt\\
&\leq \int_{0}^{T}\int_{M}dC^2 e^{-cr^2(x)}t^2dxdt+dC_1e(u_0)\int_{0}^{T}\int_{M} e^{-cr^2(x)}|x-x_0|dxdt\\
&+dC_2\int_{0}^{T}\int_{M} e^{-cr^2(x)}G^2(x,t)dxdt\\
&< \infty
\end{align*}
Therefore,  Proposition \ref{mpr} applies, and we have $G(x,t)\leq d$ for all $t>0$, proving the result.
\end{proof}

We note the following corollary :

\begin{cor}\label{compact}
There is a compact set $\widehat{K}\subset \mathbb{H}^3$  such that the image of $u_t$ intersects $\widehat{K}$ for all $t\in[0,\infty)$.
\end{cor}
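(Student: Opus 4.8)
The plan is to deduce this quickly from Corollary~\ref{tend0} together with the asymptotic description of the initial map $u_0$. The conceptual point is that, although the maps $u_t$ may drift under the flow, near infinity each $u_t$ stays uniformly close to $u_0$ (in the spatial variable, for a fixed $t$), and $u_0$ is pinned at infinity by the fixed polygon $P$; so the images of the $u_t$ cannot all escape from the same bounded region of $\HH^3$.

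Concretely, I would first fix one horizontal half-plane $H_i$ of the $\sigma$-metric and, in the coordinates of~\S3.1.3, a point $x_0\in\mathbb{R}$, and set $p:=\gamma_i(x_0)\in\HH^3$, the corresponding point on the $i$-th geodesic side of $P$ (in the model where $\gamma_i$ is the vertical geodesic, $p=(0,0,e^{x_0})$). By the $\HH^3$-analogue of Proposition~\ref{est} (available here by the Remark following Proposition~\ref{myprop}), the restriction $u_0\vert_{H_i}$ is exponentially $C^0$-close to the collapsing map onto $\gamma_i$; so, along the diverging sequence $z_k:=(x_0,k)\in H_i$, I get $d_{\HH^3}(u_0(z_k),p)\to 0$ while $r(0,z_k)\to\infty$. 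Next, fixing $t>0$ (the case $t=0$ being immediate from the previous sentence) and invoking Corollary~\ref{tend0}, I have $d_{\HH^3}(u_0(z_k),u_t(z_k))=O(e^{-\alpha_1 r(0,z_k)})\to 0$, hence $d_{\HH^3}(u_t(z_k),p)\to 0$ as $k\to\infty$. Finally, I would take $\widehat K:=\overline{B}_{\HH^3}(p,1)$: for each $t\geq 0$ there is then some $k$ (depending on $t$) with $u_t(z_k)\in\widehat K$, which is precisely the claim.

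I do not expect a serious obstacle here; the only points needing care are bookkeeping ones. One must check that $\widehat K$ is genuinely independent of $t$ --- it is, being a fixed metric ball around $p$ --- whereas the index $k$ at which $u_t$ meets $\widehat K$ is allowed to depend on $t$. One should also note that the three pieces $h_i$, $h_i^{\theta_0}$, $u_i$ making up $u_0\vert_{H_i}$ all collapse, for a given $x$, to the \emph{same} point $\gamma_i(x)$: they differ by elliptic rotations with axis $\gamma_i$, which fix $\gamma_i$ pointwise, so the position of $x_0$ relative to the interpolation interval is irrelevant. (As a consistency check, Lemma~\ref{chs} forces $p\in N_d(Q)$, though this is not needed.) An alternative, avoiding the explicit sequence, would be to combine Lemma~\ref{chs} with the fact that each $u_t$ is asymptotic to $P$ and argue that the connected image of $u_t$ must meet a fixed compact core of $P$; but the sequence argument above is shorter and more self-contained.
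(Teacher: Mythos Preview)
Your proof is correct, and takes a genuinely different route from the paper's.

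The paper defines $\widehat K$ as the compact core of the convex-hull neighborhood $N_d(Q)$ from Lemma~\ref{chs}, i.e.\ what remains after deleting horoball neighborhoods of the $n$ ideal vertices. Since each $u_t$ has image in $N_d(Q)$ and (via Corollary~\ref{tend0}) is asymptotic to \emph{all} $n$ ideal vertices of $P$, connectedness of the image forces it to meet the core: otherwise the image would lie entirely in a single cuspidal component of $N_d(Q)\setminus\widehat K$ and miss the other ends. Your argument bypasses Lemma~\ref{chs} entirely: you pin down a single target point $p\in\gamma_i$, use the explicit formulas \eqref{h1}--\eqref{h3} for $u_0$ together with Proposition~\ref{est} (applied to the underlying planar harmonic map $h$) to get $u_0(z_k)\to p$ along a vertical ray, and then invoke Corollary~\ref{tend0} at fixed $t$ to transfer this to $u_t$. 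This is shorter and more self-contained, and even yields a smaller, explicit $\widehat K$ (a fixed ball about $p$). The paper's approach, on the other hand, produces a $\widehat K$ with a cleaner geometric meaning (the compact core of the convex hull), which is a natural choice given the ambient convexity argument, though this plays no further role. Amusingly, you anticipated the paper's argument as your ``alternative''. One minor phrasing point: on the interpolation strip $D_i$ the map $u_0$ is not itself harmonic, so the appeal is not to ``the $\HH^3$-analogue of Proposition~\ref{est} for $u_0$'' but rather to Proposition~\ref{est} for $h$ together with the fact that the post-composed rotations fix $\gamma_i$ --- which you in any case spell out correctly in your bookkeeping paragraph.
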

\begin{proof}
Let $\widehat{K}$ be the compact core of the neighborhood  $N_d(Q)$ of the ideal polyhedron $Q$ in the statement of Lemma \ref{chs}, that is obtained by removing the cuspidal ends (intersections of horoballs at each of the ideal vertices). Note that $N_d(Q) \setminus \widehat{K}$ is disconnected, and has exactly $n$ components. 
Using Corollary \ref{tend0} we know that for each $t>0$, the maps $u_t$  and $u_0$ are asymptotic to the same ideal vertices. If there exists $t_0$ such that the image of $u_{t_0}$ does not intersect $\widehat{K}$, then by connectedness, the image of $u_{t_0}$ will miss some end of $Q$, which is a contradiction. This completes the proof.
\end{proof}

\subsection{Convergence to $u_\infty$ with desired asymptotics}
In the previous subsection, we were able to show that the harmonic map heat flow $u_t:\C \to \HH^3$ starting with the initial map $u_0$ constructed in \S3.1 has an image that intersects a fixed compact set $\widehat{K}$ at each time $t\geq 0$ (see Corollary \ref{compact}). However, since the domain is non-compact, this alone does not imply one-point convergence, namely that there exists a point $p\in \C$ such that $u_t(p)$ convergences as $t\to \infty$ (perhaps along a subsequence). Such a one-point convergence would have immediately implied the convergence of the flow to a limiting harmonic map (see, for example, \cite[Theorem 4.3]{LiTam}). 

We get around this difficulty by first defining an auxiliary map $\Xi:\C \to \mathbb{H}^3$ that is $C^0$, but is \textit{piecewise-harmonic}; this is the \textit{pleated plane map} that we define in \S3.5.1. Next, in \S3.5.2, we show that the distance function $\Psi(x,t) = d(u_t(x), \Xi(x))$ is a weak subsolution of the heat equation; we then obtain the desired uniform distance bound by an application of the Parabolic Maximum Principle (Proposition \ref{mpr}).

\subsubsection{The pleated plane map}

Recall that there exists a planar polygon $P_0$ in $\mathbb{H}^3$ such that the twisted ideal polygon $P$ is obtained by bending $P_0$ along a collection of disjoint diagonals (see Lemma \ref{bend}). Here, ``planar" means that $P_0$ is contained in a totally geodesic copy of $\HH^2$, that we denote by $H$.

\medskip 

 Let $\mathcal{C} = \{d_1, d_2,\ldots, d_{k-1}\}$ be the  collection  of diagonals of $P_0$ bending along which results in the twisted ideal polygon $P$. In the following definition, $R_0$ will be subset of the totally-geodesic hyperbolic plane $H \subset \HH^3$ bounded by $P_0$; the geodesics in $\mathcal{C}$ then partition $R_0$ into subsets $R_1, R_2,\ldots R_k$ (for some $k$), namely we have $R_0 \setminus \bigcup\limits_{\gamma \in \mathcal{C}} \gamma= \bigsqcup\limits_{i=1}^k R_i$. 
 
 From the proof of Lemma \ref{bend}, there is a \textit{pleated polygon} in $\mathbb{H}^3$, which bounds the twisted ideal polygon $P$ and is a piecewise-totally geodesic embedding  $\Phi: R_0 \to \mathbb{H}^3$ described more precisely as follows:

\begin{defn}[Pleated plane $\Phi$]\label{mapPhi}
 Let $e_1, e_2,\ldots e_{k-1} \in \pslc$ be elliptic isometries such that $\text{Axis}(e_i) = d_i \in \mathcal{C}$ for each $1\leq i\leq k-1$. Here, the rotation angle of $e_i$  is $\theta_i$, the argument of the complex cross-ratio corresponding to the diagonal $d_i$ when considering such cross-ratio parameters for the twisted ideal polygon $P$ (\textit{c.f.} the proof of Lemma \ref{bend}). Define a \textit{bending cocycle} $B:R_0 \times R_0 \to \pslc$ as follows: for any  pair of points $x, y\in R_0$, consider the directed geodesic segment from $x$ to $y$ and let $d_{k_1}, d_{k_2},\ldots d_{k_m}$ be the ordered set of  diagonals in $\mathcal{C}$ that the segment crosses. Then define $$B(x,y) = e_1 \circ e_{k_2} \circ \cdots \circ e_{k_m}$$ where recall $e_i$ is the elliptic isometry corresponding to the diagonal $d_i$.  (This is a cocycle in the appropriate sense -- see the discussion in \cite[\S5.3]{Dumas}.) Then the pleated polygon $\Phi:R_0 \to \mathbb{H}^3$ is defined as follows: fix a basepoint $x_0 \in R_0$ and let $\Phi(x) = B(x_0,x)\Phi_0(x)$, where $\Phi_0 :R_0 \to \mathbb{H}^3$ is the totally-geodesic embedding that one started with.  (In Figure \ref{fig2}, the pleated plane is shown shaded on the right.)
 \end{defn}

  \noindent \textit{Remark.} The notion of a ``pleated plane", or more precisely, ``pleated surface" in a hyperbolic $3$-manifold, was introduced by Thurston in \cite[Chapter 8]{Thurston}.

\vspace{.05in}

\noindent We can now define:

\begin{defn}[The map $\Xi$]\label{mapP} The \textit{pleated plane map} $\Xi:\C\to \HH^3$ is a continuous map defined as follows: Choose a harmonic map $h:\C \to \HH^2$ that is a diffeomorphism to the region $R_0$ bounded by $P_0$ (\textit{c.f.} Theorem \ref{httw}), and let $\Xi = \Phi\circ h$ where $\Phi:R_0 \to \HH^3$ is the pleated plane defined above. 
\end{defn} 


  \medskip

Since a harmonic map post-composed by an isometry is again harmonic, the following property is immediate from the above construction:

\begin{lem}\label{Pharm} The $C^0$-map $\Xi$ is harmonic away from preimages of the diagonals $h^{-1}(\mathcal{C}) = \{h^{-1}(d) \ \vert\ d\in \mathcal{C}\}$ in $\mathbb{C}$. 
\end{lem}

\noindent We also observe the following:

\begin{lem}\label{Pb} The distance function $\Psi_0(x) = d(u_0(x), \Xi(x))$ is uniformly bounded, i.e.\ there exists an $M>0$ such that $\Psi_0(x) \leq M$ for all $x\in \C$.
\end{lem}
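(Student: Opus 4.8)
The plan is to compare the two maps $u_0$ and $\Xi$ region by region, exploiting the fact that both are built out of the \emph{same} combinatorial data: a chain of half-planes in $\C$ and a planar polygon $P_0$ bent along the diagonals in $\mathcal{C}$. First I would recall that outside a compact set $K$, the domain decomposes as $\C\setminus K=\bigcup_{i=1}^n(C_i\cup H_i)$, and that both $u_0$ and $\Xi$ are, on each vertical half-plane $C_i$, essentially a totally-geodesic picture: $u_0$ restricted to $C_i$ is $h$ post-composed with an isometry into the cusp $\widehat C_i$ of $P$, while $\Xi$ restricted to the corresponding part of $R_0$ is the totally-geodesic embedding of a piece $R_j$ of $P_0$ into $\HH^3$ (composed of bends with axes in $\mathcal{C}$). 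Since $P$ itself is the image of $P_0$ under exactly these bends, the images $u_0(C_i)$ and $\Xi(\text{corresponding region})$ lie within a bounded distance of each other --- in fact, asymptotically, $h$ collapses each $C_i$ onto a cusp of $P_0$, and then bending carries that cusp onto the cusp of $P$, so the only discrepancy is the bounded difference between the harmonic map $h$ and the collapsing map on a cusp, which is controlled by Lemma \ref{est0} / Proposition \ref{est}.

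Next I would treat the horizontal half-planes $H_i$, where the two maps are genuinely different: $u_0$ on $H_i$ is the interpolation \eqref{h3} between $h_i$ and its rotate $h_i^{\theta_0}$, whereas $\Xi$ on $H_i$ is the collapsing map onto the geodesic side $\gamma_i$ of $P$. By Proposition \ref{est} (and the Remark following Proposition \ref{myprop} extending it to the $\HH^3$ target), $h_i$ itself is exponentially $C^1$-close to a collapsing map onto the corresponding geodesic side of $P_0$, and the rotation $R_{\theta_0}$ sends that side of $P_0$ to the side $\gamma_i$ of $P$; since $0\le\theta^i(x)\le\theta_0$, the interpolated map $u_0\vert_{H_i}$ stays within bounded hyperbolic distance of the collapsing map onto $\gamma_i$, i.e.\ within bounded distance of $\Xi\vert_{H_i}$. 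Here I should be careful to compare apples to apples: $u_0$ uses the $4q$-metric coordinates on $H_i$, while $\Xi$ uses the Thurston-metric coordinates on the grafted half-plane attached to $P_0$; but both half-plane structures are Euclidean half-planes collapsing onto the same geodesic, and the collapsing direction matches up (vertical lines to boundary points), so the reparametrization is bounded geometry and does not affect a $C^0$ distance bound. One must also check the overlaps $C_i\cap H_i$: on these quarter-planes the estimates from the $C_i$-analysis and the $H_i$-analysis agree up to bounded error because both reduce to the asymptotics of $h$ near a cusp.

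Finally, on the compact set $K$ itself, $\Psi_0$ is a continuous function on a compact set, hence bounded; combining this with the bounds on each $C_i$ and each $H_i$ (finitely many of them, each giving a uniform bound), we conclude $\Psi_0\le M$ globally. I expect the main obstacle to be bookkeeping the identification of the three different metrics/parametrizations in play --- the $4q$-metric on the domain of $u_0$, the Thurston metric on $X=$ domain of $\Xi$, and the hyperbolic metrics on $P_0$ and on $P$ --- and verifying that the \emph{bends} relating $P_0$ to $P$ act compatibly with the rotations $R_{\theta_0}$ used to define $u_0$ on the $H_i$; this is where the precise choice of $P_0$ from Lemma \ref{bend} and the matching of cusp lengths (noted just before the construction of $u_0$) is used. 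A secondary subtlety is that $\Xi$ is only $C^0$ across the diagonals $\mathcal{C}$ and the sides of $P_0$, but since we only want a distance bound and $\Xi$ is piecewise totally-geodesic (hence Lipschitz with controlled constant), this causes no real trouble. This is also the point where the Remark following Lemma \ref{Pb} is relevant: the argument shows the bound is insensitive to the choice of principal part of $h$, since changing it only moves $h$ by a bounded amount on each half-plane.
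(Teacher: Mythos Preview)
Your region-by-region strategy is natural, and you correctly isolate the one genuine difficulty: the two maps $u_0$ and $\Xi$ are defined using \emph{different} half-plane decompositions of $\C$ --- the $4q$-metric for $u_0$, the Thurston metric on $X\cong\C$ for $\Xi$ --- and $\Psi_0(x)$ compares them at the \emph{same} point $x$. But you then wave this away (``the reparametrization is bounded geometry and does not affect a $C^0$ distance bound''), and that is the gap. Knowing that both maps are close to a collapsing map onto the same geodesic $\gamma_i$ is not enough: you need the \emph{horizontal} parametrizations to agree up to a uniform additive constant, i.e.\ that $u_0(x)$ and $\Xi(x)$ sit at the same depth (up to $O(1)$) along $\gamma_i$ or into the $i$-th cusp. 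If the biholomorphism $X\cong\C$ shifted one half-plane relative to the other by an unbounded horizontal translation, $\Psi_0$ would blow up. Your final remark that ``changing the principal part of $h$ only moves $h$ by a bounded amount on each half-plane'' is in fact false --- the non-uniqueness in \S4.1 shows exactly the opposite --- which is a symptom of the same issue.

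The paper handles this by introducing an auxiliary harmonic map $g:\C\to G$ to the metric tree $G$ that is the leaf-space of the orthogeodesic foliation on the region bounded by $P_0$, and an unbent version $g_0:\C\to\HH^2$ of $\Xi$. One then chooses $h$ so that its Hopf differential has the same \emph{principal part} as $\hp(g)$; by \cite[Lemma 3.7, Proposition 3.9]{Gupta21} this forces $h$ and $g$ (hence $h$ and $g_0$) to escape the ends at the same rate, giving a uniform bound $d_{\HH^2}(g_0(x),h(x))\le M_3$. Since $u_0$ and $\Xi$ are obtained from $h$ and $g_0$ by post-composing with (interpolated) elliptic rotations about the sides of $P_0$, which move points only a bounded amount, the uniform bound survives. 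So the principal-part machinery is precisely what supplies the parametrization match you assumed; without it (or an equivalent comparison of the two singular-flat structures on $\C$, which is technically delicate), the argument does not close.
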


\begin{proof}
Recall from the end of \S3.1.1 that we had chosen a harmonic map $h:\C \to \mathbb{H}^2$ asymptotic to the ``straightened" planar ideal polygon $P_0$. 
We shall use this choice of harmonic map $h$ in Definition \ref{mapP}.

Recall that the initial map  $u_0:\C \to \HH^3$ is obtained by modifying the map $h:\C \to \HH^2$ as described in \S3.1.3. Briefly, one can think of the target hyperbolic plane for $h$ as a totally-geodesic plane in $\HH^3$, and the map $u_0$ is constructed by post-composing restrictions of $h$ with maps that twist about the sides of $P_0$ (see Figure \ref{fig4}). These twist maps are either elliptic rotations, or maps that interpolate between different rotations --\textit{c.f.} \eqref{h3}.

Moreover, from our constructions the pleated plane map $\Xi:\C \to \HH^3$ is also obtained by modifying $h:\C \to \HH^2$. This modification is simpler than the one that converts $h$ to $u_0$, since it only involves post-composing with elliptic rotations (see Definition \ref{mapPhi}).

Note that in both the modifications  of  the maps involved in the post-compositions  take the geodesic sides of $P_0$ to the geodesic lines in $\HH^3$ that are the sides of the twisted ideal polygon $P$. Moreover, distances along each geodesic side of $P_0$, are preserved by the modification maps.  In addition, the modifications map the cusp regions bounded by $P_0$  to a bounded neighborhood of the cusps of $P$. This is because each cusp $\widehat{C}_i$ of $P$ is at most a bounded distance away from the corresponding end the pleated plane bounded by $P$, which is piecewise totally-geodesic and possibly bent along geodesic lines exiting that end.  Thus, $u_0$ and $\Xi$ are a uniformly bounded distance from each other.
\end{proof}

\subsubsection{Distance bound and convergence of the flow}

We note the following general fact concerning the distance function between two solutions of the harmonic map heat flow:

\begin{lem}[\cite{SchoenYau79}]\label{sy}
Let $(M,g)$ and $(N,h)$ be two Riemannian manifolds such that $N$ is simply connected non-positively curved manifold. Assume that $\Omega \subset M$ is an open subset and $v,w:\Omega \times [0,\infty)\rightarrow N$ are two solutions of the harmonic map heat flow \eqref{heat1}. Then \[\left(\Delta-\frac{\partial}{\partial t}\right)d(v,w)\geq 0\]
where $d(\cdot, \cdot)$ is the distance function with respect to the metric on $N$.
\end{lem}
\begin{proof}
Let $h:\Omega \times [0,\infty) \to N \times N$ be defined by $h(x,t)=(v(x,t),w(x,t))$. 
We then have 
\begin{align}\label{root2}
\nonumber\left(\Delta-\frac{\partial}{\partial t}\right)d(v,w)&=\Delta(d\circ h)-\frac{\partial (d\circ h)}{\partial t}\\\nonumber
&\geq\langle \nabla d,\tau(h)\rangle -\left\langle \nabla d,\frac{\partial h}{\partial t}\right\rangle\\
&= \left\langle \nabla d,\left(\tau(v)-\frac{\partial v}{\partial t}, \tau(w)-\frac{\partial w}{\partial t}\right)\right\rangle = 0 
\end{align}
where the first inequality uses the chain-rule  (see for example \cite[Equation (2.1)]{HardtWolf}) and the convexity of the distance function $d$ on $N$, and the final equality uses the fact that $v$ and $w$ are solutions of \eqref{heat1}.
\end{proof}

Note that since a harmonic map $u:M \to N$ can be thought of as a \textit{stationary} solution of the harmonic map heat flow, we have the following corollary:

\begin{cor} If $v:\Omega \times [0, \infty) \to N$ is a solution of the harmonic map heat flow, and $u:\Omega \to N$ is harmonic, then the distance function $d(v(x,t), u(x))$ is a subsolution of the heat equation, that is, $\left(\Delta-\frac{\partial}{\partial t}\right) d(v(x,t), u(x)) \geq 0$.
\end{cor}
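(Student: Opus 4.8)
The plan is to deduce this immediately from Lemma \ref{sy}, by regarding the harmonic map $u$ as a time-independent solution of the harmonic map heat flow. Concretely, I would set $w:\Omega \times [0,\infty)\to N$ to be $w(x,t):=u(x)$ for all $t\geq 0$. Since $u$ is harmonic, its tension field vanishes, so
\[
\tau(w)(x,t)=\tau(u)(x)=0=\frac{\partial w}{\partial t}(x,t),
\]
and hence $w$ is a (constant-in-time) solution of \eqref{heat1}. Now $v$ and $w$ are both solutions of the harmonic map heat flow on $\Omega\times[0,\infty)$ with target $N=\HH^3$, which is simply connected and non-positively curved, so Lemma \ref{sy} applies verbatim and yields $\left(\Delta-\frac{\partial}{\partial t}\right)d(v(x,t),w(x,t))\geq 0$. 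Since $d(v(x,t),w(x,t))=d(v(x,t),u(x))$, this is exactly the claimed subsolution property.

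The only point that warrants a little care — and the reason the conclusion is, and should be, phrased in the weak (barrier) sense as in Lemma \ref{sy} — is that the composed function $(x,t)\mapsto d(v(x,t),u(x))$ need not be $C^2$ at points where $v(x,t)=u(x)$, i.e.\ where the distance function on $N\times N$ fails to be smooth along the diagonal. Away from the diagonal, the inequality is the genuine computation carried out in the proof of Lemma \ref{sy}, using the chain rule together with the convexity of $d$ on the Hadamard manifold $N$; on the diagonal, $d\geq 0$ attains an interior minimum, and a nonnegative function attaining an interior minimum is automatically a subsolution there in the barrier sense. I therefore expect no real obstacle: this corollary is simply the observation that stationary solutions are admissible inputs to the Schoen--Yau distance comparison.

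It is worth recording why this is exactly the form needed downstream: in \S3.5.2 it will be applied with $v=u_t$ the heat flow and $u=u_\infty$ the limiting harmonic map (once convergence is established), and — after the piecewise argument of Lemma \ref{Pharm} — with $u$ replaced on each piece by the restriction of the pleated plane map $\Xi$, so that $\Psi(x,t)=d(u_t(x),\Xi(x))$ is a subsolution of the heat equation away from the bending locus. Combined with the uniform initial bound $\Psi_0\leq M$ from Lemma \ref{Pb} and the parabolic maximum principle (Proposition \ref{mpr}), this is what produces the uniform distance bound of $u_t$ from $\Xi$, and hence the compactness needed for convergence. No new estimates are required for the corollary itself; it is a formal consequence of Lemma \ref{sy}.
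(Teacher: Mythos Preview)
Your proposal is correct and matches the paper's approach exactly: the paper simply remarks that a harmonic map is a stationary solution of the heat flow and states the corollary without further proof, which is precisely your argument of setting $w(x,t)=u(x)$ and invoking Lemma~\ref{sy}. Your additional care about smoothness on the diagonal and the downstream use with $\Xi$ goes beyond what the paper spells out, but is accurate; one small note is that the corollary (like Lemma~\ref{sy}) is stated for a general simply connected non-positively curved $N$, not just $\HH^3$.
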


In our setting, consider the distance function from solution $u_t$ of the harmonic map heat flow (with initial map $u_0$ constructed in \S3.1) to the map $\Xi$ constructed in the previous subsection (see Definition \ref{mapP}). Since by construction, $\Xi$ is $C^0$, and harmonic away from a collection of real-analytic arcs in $\C$ (see Lemma \ref{Pharm}), the computation \eqref{root2} holds away from them and we obtain:

\begin{lem}\label{weak} The distance function $\Psi(x,t) = d(u_t(x), \Xi(x))$ is a weak subsolution of the heat equation. 
\end{lem}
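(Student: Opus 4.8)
The plan is to reduce the statement to a local computation away from the pleating locus, combined with a general principle about maxima of subsolutions. Concretely, write $\Xi$ as a $C^0$-map that is harmonic on the open set $\C \setminus \Gamma$, where $\Gamma$ is the finite union of real-analytic arcs in the domain $X \cong \C$ consisting of the preimages of the geodesic sides of $P_0$ and the diagonals in $\mathcal{C}$ (Lemma \ref{Pharm}). On $(\C\setminus\Gamma)\times(0,\infty)$, both $u_t$ and $\Xi$ (viewed as a stationary solution) solve the harmonic map heat flow, the target $\HH^3$ is simply connected and negatively curved, so the computation \eqref{root2} in Lemma \ref{sy} applies verbatim: $\Psi(x,t)=d(u_t(x),\Xi(x))$ is a genuine (smooth) subsolution of the heat equation there. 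The only issue is to upgrade this to a \emph{weak} subsolution across $\Gamma$.

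First I would make precise what ``weak subsolution'' means here: $\Psi \in C^0(\C\times[0,\infty))$ and for every nonnegative test function $\varphi \in C^\infty_c(\C\times(0,\infty))$ one has $\int \Psi\,(\Delta + \partial_t)\varphi \,\, dvol_\sigma\,dt \ge 0$ (after the usual integration by parts), equivalently that $\Psi$ satisfies the sub-mean-value property against the heat kernel on small parabolic cylinders. Note $\Psi$ is continuous everywhere: $u_t$ is smooth and $\Xi$ is continuous, and in fact $\Psi$ is Lipschitz away from $\Gamma$ and globally continuous, which is what one needs for the distributional formulation to make sense.

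The key step is a standard ``removable singularity'' argument for subsolutions of parabolic equations across a set of codimension one. Since $\Gamma$ is a finite union of real-analytic arcs, it has zero capacity / measure zero, and more importantly $\Psi$ is continuous across $\Gamma$ and the one-sided normal derivatives of $\Psi$ on either side of each arc satisfy the correct inequality: because $d(\cdot,\cdot)$ on $\HH^3$ is convex and $\Xi$ is a continuous piecing-together of harmonic (totally-geodesic or collapsing) maps whose images meet along the geodesic sides of $P$, the function $\Psi$ is the minimum — no, rather, along $\Gamma$ the gradient of $\Xi$ jumps in a way that makes $\Psi$ have a concave ``kink'' (the sum of the two outward normal derivatives is $\le 0$ in the barrier sense). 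The clean way to phrase this: $\Psi$ is locally a minimum of finitely many smooth subsolutions near $\Gamma$. Indeed in a neighborhood of a point of $\Gamma$, $X$ is divided into two (or more) regions, and on each closed region $\Psi$ extends to a smooth subsolution $\Psi_j$ on a full neighborhood (extend $\Xi$ analytically across the arc, using that a totally-geodesic map and a collapsing map both extend real-analytically; the distance to such an extension is still a subsolution by Lemma \ref{sy}); and $\Psi = \min_j \Psi_j$ near $\Gamma$ since on each side $\Xi$ agrees with the relevant $\Xi_j$ and the distance to a \emph{closer} object dominates — one checks $\Psi = \min_j \Psi_j$ by the fact that $\Xi(x)$ lies on the pleated plane and the nearest point to $u_t(x)$ among the geodesic pieces is realized by whichever piece contains $\Xi(x)$. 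Then invoke the elementary fact, already used in the proof of Lemma \ref{chs}, that the minimum of finitely many subsolutions of the heat equation is a weak subsolution. Hence $\Psi$ is a weak subsolution on all of $\C\times(0,\infty)$.

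The main obstacle I anticipate is the bookkeeping in the previous paragraph: verifying that near a point of the pleating locus $\Psi$ is genuinely the minimum of the smooth subsolutions obtained by analytically continuing each totally-geodesic/collapsing piece of $\Xi$, rather than merely bounded by it. This requires knowing that for $x$ in region $R_j$ (or in the $i$-th Euclidean half-plane), the nearest point of the \emph{entire} pleated plane to $u_t(x)$ is no closer than $\Xi_j(x)$ — but of course we do not need equality with the global distance, only that $\Psi=\min_j\Psi_j$ holds in a one-sided neighborhood of $\Gamma$, and this is immediate from $\Xi|_{R_j}=\Xi_j|_{R_j}$ together with $d(u_t(x),\Xi_j(x))\ge d(u_t(x),\Xi_k(x))$ being \emph{false} in general — so the correct and simpler route is: on each closed region $\Psi=\Psi_j$ exactly (not a min), each $\Psi_j$ is a smooth subsolution on an open neighborhood of that closed region by analytic continuation, and then one appeals to the standard gluing lemma for subsolutions agreeing continuously across a hypersurface with a ``downward corner'' (equivalently, that a continuous function which is a smooth subsolution on each side and whose one-sided conormal derivatives satisfy $\partial_\nu^+\Psi + \partial_\nu^-\Psi \le 0$ is a distributional subsolution — the kink contributes a nonpositive singular measure supported on $\Gamma$). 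The conormal inequality holds because bending $P_0$ into $P$ only decreases distances transverse to the bends, or can be checked directly from the first variation of $d(u_t(x),\cdot)$ as $x$ crosses $\Gamma$. Once that inequality is in hand, the conclusion follows. I would write this up by first stating the gluing lemma, then checking the conormal-derivative inequality at a generic point of each arc of $\Gamma$ (the finitely many crossing points of arcs are negligible), and finally assembling the global statement.
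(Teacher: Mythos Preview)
The paper's own ``proof'' of this lemma is a single sentence: since $\Xi$ is $C^0$ and harmonic away from the arcs $\Gamma$, the computation \eqref{root2} holds off $\Gamma$, and the weak subsolution property is asserted to follow. You are right to be uneasy with this and to try to supply the missing step across $\Gamma$; in that sense your proposal is more scrupulous than the paper. But your attempted fill-in does not close the gap. First, a side error: the fact invoked in Lemma~\ref{chs} is that the \emph{maximum} of subsolutions is a weak subsolution, not the minimum; your first route is therefore based on a false principle (and you correctly sense it does not fit, though for the wrong reason).

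The more serious problem is with the gluing route. You assert the conormal inequality $\partial_{\nu^+}\Psi + \partial_{\nu^-}\Psi \le 0$ along $\Gamma$, but this does \emph{not} hold for arbitrary positions of $u_t(x_0)$. Writing out the first variation, the smooth contributions from $u_t$ cancel and one is left with $\partial_{\nu^+}\Psi + \partial_{\nu^-}\Psi = -\langle \nabla_2 d,\, [\partial_\nu \Xi]\rangle$, where $\nabla_2 d$ is the unit vector at $\Xi(x_0)$ pointing away from $u_t(x_0)$ and $[\partial_\nu\Xi]$ is the jump in the normal derivative of $\Xi$. Along a bending diagonal (two isometric pieces meeting at an angle) or along a side of $P_0$ (an isometric piece meeting a collapsing map), the sign of this inner product is governed by which side of the pleated plane $u_t(x_0)$ lies on: if $u_t(x_0)$ sits on the ``wrong'' side of the bend, the kink in $\Psi$ is concave and contributes a \emph{negative} singular measure to $\Delta\Psi$, violating the subsolution property. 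Your heuristic that ``bending only decreases distances transverse to the bends'' encodes an implicit assumption about the location of $u_t(x_0)$ that is nowhere established. So as written the argument does not go through, and the claimed analytic-continuation of each piece of $\Xi$ together with a one-sided ``downward corner'' cannot be verified without further input about where the flow actually lands.
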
 

Since the Parabolic Maximum Principle (Proposition \ref{mpr}) holds for \textit{weak} subsolutions of the heat equation, we then obtain:

\begin{cor}\label{Psib}
    There is a constant $M>0$ such that $\Psi(x,t)\leq M$ for all $(x,t) \in \C \times [0,\infty)$.
 \end{cor}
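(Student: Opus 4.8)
The plan is to deduce the bound from the Parabolic Maximum Principle (Proposition \ref{mpr}), applied to the auxiliary function $G(x,t) := \Psi(x,t) - M$, where $M>0$ is the uniform bound on $\Psi_0(x) = \Psi(x,0)$ furnished by Lemma \ref{Pb}. Two of the three hypotheses of Proposition \ref{mpr} are then immediate: by Lemma \ref{Pb} we have $G(x,0) = \Psi_0(x) - M \leq 0$ for all $x \in \C$, and by Lemma \ref{weak} (together with the fact that constant functions are solutions of the heat equation, so subtracting $M$ preserves the subsolution property) $G$ is a weak subsolution of the heat equation on $\C \times [0,\infty)$.

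The remaining point to check — and the only one requiring any work — is the growth condition $\int_0^T \int_\C e^{-cr^2(x)} G^2(x,t)\,dx\,dt < \infty$ for some $c>0$ and all $T>0$. First I would record the crude linear-in-$t$ displacement estimate $d(u_0(x), u_t(x)) \leq Ct$, obtained exactly as in the proof of Lemma \ref{chs}, using $\lVert \tau(u_0)\rVert_\infty = C < \infty$ (see \eqref{bbound}), the bound \eqref{ico11}, and $\int_\C H(x,y,s)\,dy \leq 1$. Combined with the triangle inequality and Lemma \ref{Pb} this gives
\[
\Psi(x,t) \leq d(u_t(x), u_0(x)) + d(u_0(x), \Xi(x)) \leq Ct + M,
\]
so that $-M \leq G(x,t) \leq Ct$ and hence $G^2(x,t) \leq (M+Ct)^2$. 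Since the metric $\sigma$ is flat away from a compact set and has polynomial volume growth, while $r(0,x)$ grows (super-)linearly in $|x|$, the Gaussian integral $\int_\C e^{-cr^2(x)}\,dx$ converges for every $c>0$; therefore
\[
\int_0^T \int_\C e^{-cr^2(x)} G^2(x,t)\,dx\,dt \leq \Big(\int_0^T (M+Ct)^2\,dt\Big)\Big(\int_\C e^{-cr^2(x)}\,dx\Big) < \infty.
\]

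With all hypotheses verified, Proposition \ref{mpr} applied on $\C \times [0,T]$ yields $G(x,t) \leq 0$, i.e.\ $\Psi(x,t) \leq M$, on $\C \times [0,T]$; letting $T \to \infty$ gives the claimed bound on all of $\C \times [0,\infty)$. I do not expect a serious obstacle here: the substantive inputs are Lemma \ref{Pb} (the $t=0$ bound) and Lemma \ref{weak} (the subsolution property), and the only mild subtlety — ensuring the controlled growth of $G$ at spatial infinity so that the non-compact maximum principle is applicable — is handled by the linear displacement estimate together with the fast growth of $r(0,\cdot)$ in the singular-flat metric on $\C$.
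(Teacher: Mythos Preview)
Your proposal is correct and follows essentially the same argument as the paper: apply Proposition \ref{mpr} to $\Psi - M$, using Lemma \ref{Pb} for the initial condition, Lemma \ref{weak} for the subsolution property, and the linear-in-$t$ displacement bound $d(u_0(x),u_t(x))\leq Ct$ (from the uniform tension-field bound) to verify the weighted integrability hypothesis. The only differences are cosmetic---you are more explicit about subtracting $M$ and about why $\int_\C e^{-cr^2(x)}\,dx$ converges.
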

 \begin{proof}
By Lemma \ref{Pb} there exists a constant $M>0$ such that $\Psi(x,0)\leq M$ for all $x\in \C$. To apply the parabolic maximum principle (Proposition \ref{mpr}) we need to check the integral condition. 

Note that for any $(x,t) \in \C \times [0,\infty)$, we have the bound \[\Psi(x,t)\leq d(\Xi(x),u_0(x))+d(u_0(x),u_t(x))\leq M+Dt,\] for some constants $D>0$ that is independent of $t$. Here the linear bound in the second term on the RHS follows from \eqref{distb} and the fact that the tension field is uniformly bounded (see equations \eqref{bbound} and \eqref{ico11}). Hence we obtain that for any $T>0$, \[\int_{0}^{T}\int_{\mathbb{C}} e^{-r^2(x)}\Psi^2(x,t)dxdt\leq\int_{0}^{T}\int_{\mathbb{C}} e^{-r^2(x)}(C+Dt)^2dxdt<\infty\]
and Theorem \ref{mpr} applies. Thus we  conclude  $\Psi(x,t) = d(\Xi(x),u_t(x))\leq M$ where $M$ for all $x\in \C$ and $t\in \mathbb{R}^+$. 
 \end{proof}

We can now prove the convergence of the flow:

\begin{prop}\label{rtc}
The harmonic map heat flow $u_t$ converges uniformly on compact sets to a harmonic map  $u_\infty$ as $t\to \infty$. 
\end{prop}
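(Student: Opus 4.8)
The plan is to combine the uniform estimates established in \S3.2--\S3.5 to extract a subsequential limit of the flow, to identify it as a harmonic map with the correct coarse asymptotics, and then to upgrade subsequential convergence to convergence of the whole flow by a rigidity argument for harmonic maps into the negatively curved target $\HH^3$.

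First I would set up subsequential compactness. The uniform bound on the energy densities $e(u_t)$ from Lemma \ref{edb} bounds the first derivatives of $u_t$ uniformly on $\C$; feeding this into interior parabolic Schauder estimates for the system \eqref{heat1} (as in \cite[Appendix A.2]{Nishikawa02}) yields uniform bounds on all higher derivatives of $u_t$ on compact subsets of $\C$, independent of $t$. Meanwhile, the uniform distance bound $d(u_t(x),\Xi(x))\le M$ of Corollary \ref{Psib} confines each orbit $\{u_t(x):t\ge 0\}$ to the compact ball $\overline{B}(\Xi(x),M)\subset\HH^3$. By Arzel\`a--Ascoli and a diagonal argument, any sequence $t_k\to\infty$ then has a subsequence with $u_{t_k}\to u_\infty$ in $C^2_{\mathrm{loc}}(\C)$; since $\lVert\tau(u_{t_k})\rVert_\infty\to 0$ by Lemma \ref{wuie}, passing to the limit in \eqref{heat1} shows $\tau(u_\infty)=0$, so $u_\infty$ is harmonic. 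Passing to the limit in Corollary \ref{Psib} also gives $d(u_\infty(x),\Xi(x))\le M$ for all $x$; hence $u_\infty$ is asymptotic to $P$, so its limit set in $\partial_\infty\HH^3$ contains at least three distinct points and $u_\infty$ does not map onto a single geodesic.

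The crux is to show that $u_\infty$ is independent of the chosen subsequence. Given two subsequential limits $u_\infty$ and $u_\infty'$, both are harmonic and, by the previous paragraph and the triangle inequality, lie within distance $2M$ of one another. The function $x\mapsto d\big(u_\infty(x),u_\infty'(x)\big)$ is subharmonic on $\C$ --- this is Lemma \ref{sy} applied with $u_\infty,u_\infty'$ regarded as stationary solutions of \eqref{heat1}, equivalently the composition of the convex function $d(\cdot,\cdot)^2$ on $\HH^3\times\HH^3$ with the harmonic map $(u_\infty,u_\infty')$ --- and being bounded on $\C$ it is constant, say $\equiv c\ge 0$. I would then rule out $c>0$: in that case the equality case of the Bochner inequality behind Lemma \ref{sy}, together with the strict negativity of the curvature of $\HH^3$ (so that the index form of every geodesic segment is positive definite on normal variations and there are no flat strips), forces $du_\infty$ to be everywhere tangent to the geodesic from $u_\infty(x)$ to $u_\infty'(x)$; hence $u_\infty$ maps into a single geodesic, contradicting the previous paragraph (this is the rigidity of homotopic harmonic maps into negatively curved targets; see \cite{SchoenYau79}). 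So $c=0$ and $u_\infty=u_\infty'$. Since $C^2_{\mathrm{loc}}(\C)$ is metrizable and every subsequence of $\{u_t\}$ has a further subsequence converging to this common limit, $u_t\to u_\infty$ in $C^2_{\mathrm{loc}}(\C)$, and in particular uniformly on compact sets.

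I expect the uniqueness step to be the main obstacle. As the example in \S2.1 illustrates, long-time existence together with a decaying tension field does not by itself force convergence, and since $\C$ is parabolic one cannot appeal to a positive bottom of the spectrum as in \cite[Theorem 5.2]{LiTam}. The resolution uses the geometry of the target twice: the uniform distance bound from the pleated plane $\Xi$ keeps the flow from escaping to infinity, while the absence of flat strips in $\HH^3$ rigidifies the family of subsequential limits into a single harmonic map.
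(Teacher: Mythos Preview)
Your proof is correct and follows essentially the same route as the paper's: extract a subsequential limit using the uniform energy bound (Lemma \ref{edb}), parabolic Schauder estimates, and the distance bound to $\Xi$ (Corollary \ref{Psib}); identify the limit as harmonic via Lemma \ref{wuie}; and then upgrade to full convergence by showing any two subsequential limits coincide. The only cosmetic difference is in the uniqueness step: the paper observes that the bounded subharmonic distance function is constant and then cites \cite[Lemma 3.11]{Sagman} to conclude the constant is zero, whereas you supply the Schoen--Yau rigidity argument directly (constant positive distance forces both maps into a common geodesic, contradicting the fact that $u_\infty$ is a bounded distance from $\Xi$ and hence asymptotic to $P$ with $n\ge 3$ ideal vertices).
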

\begin{proof}
 Corollary \ref{Psib} implies that for any $x_0\in \C$, $u_t(x_0)$ is contained in a bounded set of $\HH^3$ as $t \to \infty$. We thus obtain a subsequence of times $t_n \to \infty$ such that $u_{t_n}(x_0) \to p$, i.e. we obtain one-point convergence.  For ease of notation, we shall denote $u_n := u_{t_n}$. 

Recall that the energy density of ${u}_n$ is uniformly bounded and independent of $n$ by Lemma \ref{edb}. Since the energy density is the norm of the gradient, this is equivalent to a uniform bound on first derivatives (in the space variable) of $u_n$. Standard bootstrapping techniques applying Schauder estimates  for  solutions of a parabolic PDE (see, for example, \cite[Appendix A.2(e)]{Nishikawa02}) then implies uniform bounds on higher order derivatives as well. Applying Arzela-Ascoli's theorem we conclude that ${u}_n$ converges to a $C^2$-smooth map ${u}_{\infty}$ uniformly on compact subsets.

By Lemma \ref{wuie}, $\lVert \tau (u_n) \rVert_\infty\to 0$ uniformly as $n\to \infty$. Thus, $\lVert \tau({u}_\infty) \rVert_\infty =0$ and ${u}_{\infty}$ is harmonic, as desired. Note that by the previous Corollary, $u_\infty$ is a uniformly bounded distance away from $u_0$.

We know that  there is a unique harmonic map that is a bounded distance away from $u_0$:  Indeed, Lemma \ref{sy} implies that the distance function between two harmonic maps from $\C$ to $\HH^3$ is a  subharmonic function on $\C$, and hence constant. Hence, if two such harmonic maps are a \textit{bounded} distance apart, they are a constant distance apart, and one can argue as in \cite[Lemma 3.11]{Sagman} that the constant is in fact zero. 

From this uniqueness, it is then not hard to conclude that for  \textit{any} sequence of times $s_n \to \infty$, the maps $u_{s_n}$ converges uniformly on compact sets to $u_\infty$, i.e.\ the original harmonic map heat flow converges to $u_\infty$ as $t\to \infty$. 
\end{proof}

We conclude by observing that the limiting harmonic map $u_\infty$ is indeed the desired map:

\begin{lem} The harmonic map $u_\infty:\C\to \HH^3$ is asymptotic to the twisted ideal polygon $P$, and has a polynomial Hopf differential.
\end{lem}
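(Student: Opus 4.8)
The plan is to verify the two asserted properties of $u_\infty$ separately, using the estimates already established for the flow $u_t$. For the asymptotics, I would argue as follows. By Corollary \ref{Psib}, there is a constant $M>0$ such that $d(u_t(x), \Xi(x)) \leq M$ for all $(x,t) \in \C \times [0,\infty)$; passing to the limit $t\to\infty$ along the convergent subsequence from Proposition \ref{rtc} (convergence being uniform on compact sets, hence in particular pointwise), we get $d(u_\infty(x), \Xi(x)) \leq M$ for all $x\in\C$. Now $\Xi$ is the pleated plane map, whose image lies in the pleated plane spanned by the twisted ideal polygon $P$, and which is itself asymptotic to $P$: indeed, on each half-plane $H_i$ of the conformal surface $X\cong\C$, the map $\Xi$ is the collapsing map onto the geodesic side $\gamma_i$ of $P$. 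Since $u_\infty$ stays within a uniformly bounded neighborhood of $\Xi$, for any diverging sequence $\{x_k\}$ in $\C$, the images $u_\infty(x_k)$ and $\Xi(x_k)$ stay a bounded distance apart in $\HH^3$; as $\Xi(x_k)$ (after passing to a subsequence, sorting $x_k$ into one of the finitely many half-planes $H_i$) converges to an ideal point of $P$ along $\gamma_i$, so does $u_\infty(x_k)$. Hence $u_\infty$ is asymptotic to $P$, in the sense of the definition in the introduction. (Alternatively, and perhaps more cleanly, one can invoke Corollary \ref{tend0}: for each fixed $t$, $d(u_0(x), u_t(x)) = O(e^{-\alpha_1 r(0,x)})$, and this estimate is uniform in $t$ once one knows the tension-field bound is uniform; letting $t\to\infty$ gives $d(u_0(x), u_\infty(x)) \to 0$ as $x\to\infty$, and $u_0$ is asymptotic to $P$ by construction in \S3.1.)

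For the Hopf differential, I would use the standard fact (cited in the Remark after the definition of the Hopf differential) that the Hopf differential of any harmonic map from $\C$ is a holomorphic quadratic differential, hence an entire function $q_\infty(z)\,dz^2$. To see that $q_\infty$ is in fact a polynomial, I would bound its growth. Writing $w = \ln\lVert \partial u_\infty\rVert$, the energy density satisfies $e(u_\infty) = e^{2w} + \lvert q_\infty\rvert^2 e^{-2w}$, and the AM–GM inequality gives $\lvert q_\infty(z)\rvert \leq \tfrac{1}{2} e(u_\infty)(z)$ (this is the computation sketched in the commented-out Proposition \ref{pol}). Now the energy density of $u_\infty$ is bounded: it is the uniform-on-compacts limit of the $u_n$, whose energy densities are uniformly bounded by Lemma \ref{edb} with a constant $M_3$ independent of $n$, so $e(u_\infty) \leq M_3$ on all of $\C$. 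But wait — $e(u_\infty)$ here is measured with respect to the smoothened flat metric $\sigma$ on the domain, which on $\C\setminus K$ is the metric induced by $4q_0$ where $q_0$ is a polynomial quadratic differential of degree $n-2$. Converting to the Euclidean metric $\lvert dz\rvert^2$ on $\C$ introduces a factor of the conformal density of $\sigma$, which grows polynomially: in a coordinate $z$ on $\C$, the density of $\sigma$ outside $K$ is comparable to $\lvert z\rvert^{n-2}$. Hence the Euclidean energy density of $u_\infty$ is $O(\lvert z\rvert^{n-2})$, and therefore $\lvert q_\infty(z)\rvert = O(\lvert z\rvert^{n-2})$ as $z\to\infty$. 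An entire function with polynomial growth is a polynomial, so $q_\infty(z)$ is a polynomial of degree at most $n-2$.

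The main obstacle — or rather, the point requiring the most care — is pinning down the \emph{degree} of $q_\infty$ precisely as $n-2$, matching the statement of Theorem \ref{main0}. The growth bound above only gives degree $\leq n-2$; for the lower bound one must argue that $u_\infty$ is genuinely asymptotic to an $n$-gon (not a polygon with fewer sides), which forces, via Proposition \ref{myprop} run in reverse, the Hopf differential to have degree exactly $n-2$ (a degree-$m$ polynomial Hopf differential produces an $(m+2)$-gon). Since we have already shown $u_\infty$ is asymptotic to $P$, which by hypothesis has $n$ ideal vertices, Proposition \ref{myprop} applied to $u_\infty$ says its polynomial Hopf differential has degree $m$ with $m+2 = n$, i.e. $m = n-2$, as desired; one should double-check that the non-degeneracy hypothesis (at least three distinct ideal vertices) holds for $P$, which it does by Definition \ref{defn:tip}. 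I would also note that strictly speaking one should confirm $u_\infty$ is not the degenerate case (image a single geodesic), but this is ruled out because $u_\infty$ is asymptotic to $P$ which has $n\geq 3$ vertices.
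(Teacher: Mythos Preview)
Your main argument is correct and matches the paper's approach: bound $|q_\infty|$ pointwise by the energy density times the conformal factor $\sigma$ (the paper phrases this via $\|q\|^2=\mathcal{H}\mathcal{L}$ rather than AM--GM, but it is the same inequality), use Lemma \ref{edb} for the uniform energy bound and the polynomial growth of $\sigma$ to conclude $q_\infty$ is a polynomial, and use the uniform distance bound of Corollary \ref{Psib} to transfer the asymptotics from $\Xi$ (equivalently $u_0$, via Lemma \ref{Pb}) to $u_\infty$. The paper reverses the order---first proving polynomiality, then invoking Proposition \ref{myprop} to get asymptoticity to \emph{some} twisted polygon, then identifying it with $P$ via the distance bound---but the content is the same.

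One caveat: your parenthetical ``alternative'' via Corollary \ref{tend0} does not work as written. The exponential decay there has implicit constants $C(t), D(t)$ coming from Lemma \ref{uboundtf} that depend on $t$, so the estimate $d(u_0(x),u_t(x))=O(e^{-\alpha_1 r(0,x)})$ is \emph{not} uniform in $t$, and you cannot pass to the limit $t\to\infty$ to get $d(u_0(x),u_\infty(x))\to 0$. Stick with the $\Xi$-argument (or, as the paper does, just the bounded-distance conclusion $d(u_0,u_\infty)\leq M$), which suffices.
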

\begin{proof}
We first show that the limiting harmonic map $u_\infty$ has a polynomial Hopf differential $q$ (of some degree). Indeed, since $u_\infty$ is harmonic we know that $\text{Hopf}(u_\infty) = q(z) dz^2$  where $q$ is an entire function.

A computation  (see, for example, \cite[\S2.2]{WolfKnox}) shows that the norm $$\lVert q \rVert^2 = q^2(z)/\sigma^2(z) = \mathcal{H} \mathcal{L}$$ where $\sigma$ is the conformal metric on the domain complex plane (see \S3.2), and $\mathcal{H} = \lVert \partial u_\infty \rVert^2$ and $\mathcal{L} = \lVert \bar{\partial}  u_\infty \rVert^2 $ are the holomorphic and antiholomorphic energy densities of $u_\infty$.

Since the energy density $e(u_\infty)= \mathcal{H} + \mathcal{L}$ it follows that $$\lvert q(z)\rvert \leq \sigma(z) e(u_\infty)(z).$$

Recall that the energy density  $e({u}_t)$ is uniformly bounded by Lemma \ref{edb}.
Moreover, from \S3.2,  the conformal factor $\sigma$ has at most polynomial growth, since it is a smoothening of the $\phi$-metric on $\C$, where that $\phi$ is a polynomial Hopf differential  of a harmonic map $h:\C \to \HH^3$. These imply that $q(z)$ has at most polynomial growth, and is thus a polynomial, as desired. By Proposition \ref{myprop}, the map $u_\infty$ is asymptotic to a twisted ideal polygon in $\HH^3$. 

It remains to show that this must be the given  twisted ideal polygon $P$. 
By construction, the initial map $u_0$ is asymptotic to $P$. By the uniform distance bound from $\Xi$ along the flow -- see Corollary \ref{Psib} --  and the convergence $u_t \to u_\infty$,  the  map $u_\infty$ is a bounded distance from $u_0$. Thus, $u_\infty$ hence asymptotic to the twisted ideal polygon with the same ideal vertices as $P$, which is exactly $P$.
\end{proof}

This completes the proof of Theorem \ref{main}.

\section{Uniqueness with prescribed principal part}

In this  final section, we shall characterize the non-uniqueness of the harmonic maps obtained in Theorem \ref{main0}. We first 
show that our construction in fact yields \textit{infinitely many} harmonic maps asymptotic to the given twisted ideal polygon.  Then, we shall prove a uniqueness statement when one prescribes a principal part of the Hopf differential (see Definition \ref{defn:princ}).

\subsection{Non-uniqueness}\label{repeateq}
To show non-uniqueness, observe that if one starts with another initial map $u_0^2$ which has the same asymptotics as $u_0^1$ and is at a bounded distance from $u_0^1$, then the harmonic map heat flow converges to the same harmonic map. Indeed, if we call the limiting harmonic maps $u_\infty^1$ and $u_{\infty}^2$ for the two flows, then the function $d_{\mathbb{H}^3}(u_{\infty}^1(x),u_{\infty}^2(x))$ is a uniformly bounded subharmonic function on $\C$ and hence constant, and the constant is identically zero by the same argument as in \cite[Lemma 3.11]{Sagman}.

We therefore have to construct initial maps at an \textit{unbounded distance} from each other.

\vspace{.05in} 

Recall that in the beginning of the construction of the initial map (\S3.1.1) we started with a harmonic map $h:\C \to \HH^2$ that is asymptotic to a planar ideal polygon $P_0$. There are different choices of such a harmonic map -- this fact is implicit in \cite{HTTW95} just by comparing dimensions; for a description of the space of such maps, see \cite{Gupta21}. Any such a pair of such distinct harmonic maps will necessarily be an unbounded distance apart, by the argument above. It remains to show:

\begin{lem}\label{nunnes}
In our construction described in \S3.1,  two distinct harmonic maps $h, h^\prime:\C \to \HH^2$ asymptotic to the planar ideal polygon $P_0$ determine two initial maps $u_0,u_0^\prime: \C \to \HH^3$ that are an unbounded distance apart. 
\end{lem}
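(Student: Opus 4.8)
The plan is to show that the modification $h\rightsquigarrow u_0$ described in \S3.1.3 cannot bring $u_0$ and $u_0'$ within a bounded distance; since two distinct harmonic maps asymptotic to $P_0$ are already an unbounded distance apart (as recalled just above), this will prove the lemma. The subtlety is that $u_0$ is \emph{not} a bounded distance from $h$ (viewed in $\HH^3$ via the totally geodesic plane $H\supset P_0$): on each vertical half-plane $u_0=\iota_i\circ h$, where $\iota_i$ is the fixed isometry carrying the cusp $\widetilde C_i$ of $P_0$ to the cusp $\widehat C_i$ of $P$, and this moves the ideal vertex $\xi_i^{P_0}$ to $\xi_i$; so the comparison must be carried out directly in $\HH^3$.

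First I would pin down the Hopf differentials. Write $q=\mathrm{Hopf}(h)$ and $q'=\mathrm{Hopf}(h')$. If $q$ and $q'$ had the same principal part (Definition \ref{defn:princ}), i.e.\ $\sqrt q-\sqrt{q'}$ were integrable at $\infty$, then by \cite[Proposition 3.12]{Gupta21} applied to $P_0$ we would get $h=h'$; hence $\sqrt q-\sqrt{q'}$ has, at $\infty$, a leading term $c\,z^{\ell}$ with $c\neq 0$ and $\ell\ge -1$. Integrating, the canonical coordinates $w=\int\sqrt q$ and $w'=\int\sqrt{q'}$ (each defined up to sign and an additive constant on $\C\setminus K$) satisfy $w-w'\sim \tfrac{c}{\ell+1}z^{\ell+1}$, or $c\log z$ when $\ell=-1$; in particular, along every ray $z=te^{i\theta}$ with $\theta$ outside a finite exceptional set, $|\mathrm{Re}\,w(z)-\mathrm{Re}\,w'(z)|\to\infty$ and $|\mathrm{Re}\,w(z)+\mathrm{Re}\,w'(z)|\to\infty$ as $t\to\infty$.

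The main step is then to fix a generic $\theta$ lying in the interior of a vertical half-plane $C_i$ of the $q$-metric (and in the interior of whichever half-plane of the $q'$-metric contains it), and let $z\to\infty$ along it. On $C_i$, $u_0=\iota_i\circ h$, so $u_0(z)$ enters the cusp $\widehat C_i$ of $P$ and escapes towards $\xi_i$; by the analogue of Proposition \ref{est} for $\HH^3$ noted in \S2.3 — together with the fact that an isometry fixing $\xi_i$ changes the logarithm of the depth into a cusp by at most a bounded amount — the depth of $u_0(z)$ into $\widehat C_i$ is $\asymp e^{\mathrm{Re}\,w(z)}$. For the same $z$, $u_0'(z)$ is obtained from $h'(z)$ by an isometry (a rotation about a side of $P_0$, or the identity) or by the bounded interpolation \eqref{h3}, all of which depend only on $P_0$ and $P$; so either $u_0'(z)$ escapes towards some ideal vertex $\xi_j$ of $P$ with depth $\asymp e^{\pm\mathrm{Re}\,w'(z)}$ (when $\theta$ lies in a vertical half-plane of the $q'$-metric) or it travels along some side $\gamma_j$ of $P$ with displacement $\asymp |\mathrm{Re}\,w'(z)|$ (when $\theta$ lies in a horizontal half-plane of $q'$). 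If the limiting ideal point of $u_0'(z)$ is not $\xi_i$, then $d_{\HH^3}(u_0(z),u_0'(z))\to\infty$ immediately. Otherwise both $u_0(z)$ and $u_0'(z)$ converge to $\xi_i$, moving into (or along a bounding geodesic of) the thin cusp $\widehat C_i$ at exponential rates governed by $\mathrm{Re}\,w(z)$ and $\pm\mathrm{Re}\,w'(z)$; projecting both to the geodesic ray towards $\xi_i$ — a $1$-Lipschitz map — and using that horoball-cusp regions are exponentially thin, one gets $d_{\HH^3}(u_0(z),u_0'(z))\ge \bigl|\,|\mathrm{Re}\,w(z)|-|\mathrm{Re}\,w'(z)|\,\bigr|-O(1)$, and by the choice of $\theta$ this tends to $\infty$. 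In every case $d_{\HH^3}(u_0(z),u_0'(z))\to\infty$, so $u_0$ and $u_0'$ are an unbounded distance apart.

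The main obstacle is this last case, where $u_0$ and $u_0'$ escape to the \emph{same} ideal vertex: there the divergence is quantitative rather than topological, and must be extracted from the mismatch $|\mathrm{Re}\,w-\mathrm{Re}\,w'|\to\infty$ of the canonical coordinates through the cusp asymptotics of the harmonic maps. Determining which half-plane of the $q'$-decomposition contains the chosen ray $\theta$, and hence the precise asymptotic behavior of $h'$ — and so of $u_0'$ — along it, is the routine-but-fiddly bookkeeping in the argument.
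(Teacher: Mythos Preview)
Your proposal is essentially a correct sketch, but it takes a substantially different route from the paper's argument.

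The paper's proof is shorter and purely geometric. It works piecewise on the chain $\{C_1,H_1,\ldots,C_n,H_n\}$ and uses only two observations about the modification $h\mapsto u_0$ from \S3.1.3: (a) on each $C_i$ the modification is post-composition by a \emph{fixed} isometry $\iota_i$, so $d(u_0|_{C_i},u_0'|_{C_i})=d(h|_{C_i},h'|_{C_i})$; and (b) on each $H_i$ the modification is a rotation about the side $\gamma_i$, so $d(u_0(x),h(x))\le 2\,\mathrm{dist}(h(x),\gamma_i)$, which is uniformly bounded by Proposition~\ref{est}. Since $d(h,h')$ is unbounded on some $C_i$ or $H_i$, the triangle inequality immediately gives that $d(u_0,u_0')$ is unbounded on the same piece. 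No principal parts, no canonical-coordinate comparison, no cusp-depth analysis.

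Your approach instead invokes \cite[Proposition 3.12]{Gupta21} to force the principal parts of $q,q'$ to differ, then tracks the divergence of $w-w'$ along a generic ray and converts this into a depth mismatch in the cusps of $P$. What this buys you is robustness against the fact that the two constructions use \emph{different} half-plane decompositions (the $q$-decomposition for $u_0$ and the $q'$-decomposition for $u_0'$); the paper's proof tacitly writes ``on $C_i$, the initial maps $u_0$ and $u_0'$ are $h$ and $h'$ post-composed by the same isometric embedding'', which presumes that $C_i$ serves both constructions. Your argument sidesteps this by comparing depths directly. The cost is a considerably heavier analytic apparatus and the case analysis you flag at the end (same vs.\ different limiting ideal vertex), whereas the paper's two-line observations (a) and (b) do the job once one accepts that the same decomposition can be used, or that the two decompositions agree asymptotically.
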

\begin{proof}
 We briefly recall how $h,h'$ are related to $u_0,u_0'$ respectively.  Recall from \S3.1.2 that the domain complex plane has a compact set $K$ whose complement is a chain $\{C_1, H_1, C_2, H_2,\ldots C_n, H_n\}$ of vertical and horizontal half-planes, each successive pair overlapping on a quarter-plane. For any $1 \leq i\leq n$, the initial map $u_0$ is defined to equal $h$ (up to post-composition by an isometry) on the quarter-plane $C_i\cap H_i$, and is defined to be $E\circ h$ in $C_{i+1} \cap H_{i}$, where $E$ is an elliptic rotation by some angle $\theta_0$ (that depends on $i$). On the half-infinite strip $[a_i,a_{i+1}] \times [0,\infty)$ between $C_i \cap H_i $ and $C_{i+1} \cap H_{i}$, $u_0$ is defined to be an interpolating map between $h$ and $E\circ h$. The interpolation is obtained by modifying $h$ by a post-composition by a rotation around the axis of $E$, where the rotation angle smoothly increases from $0$ to $\theta_0$ in the interval $[a_i,a_{i+1}]$ (see \S3.1.3 for details).
 
 Note that if $d(h|_{C_i}(x),h^\prime|_{C_i}(x))$ and $d(h|_{H_i}(x), h^\prime|_{H_i}(x))$  are both uniformly bounded for all $i$, then $h$ and $h^\prime$ are bounded distance apart as $\cup_{i=1}^n(C_i\cup H_i)=\mathbb{C}\setminus K$. We can therefore assume that there is an $i$ such that one of the two distance functions is unbounded. If $d(h|_{C_i}(x),h^\prime|_{C_i}(x))$ is unbounded,  then so is $d(u_0|_{C_i}(x),u_0^\prime|_{C_i}(x))$ since on $C_i$, the initial maps $u_0$ and $u_0^\prime$ are $h$ and $h^\prime$ post-composed by the same isometric embedding of $C_i$ into the cusped region $\widehat{C}_i \subset \HH^3$. 
 If $d(h|_{H_i}(x),h^\prime|_{H_i}(x)) \to \infty $ as $k\to \infty$, we shall now argue  that $d(u_0|_{H_i}(x),u_0^\prime|_{H_i}(x))$ is unbounded as well: consider  a sequence $\{x_k\}_{k\geq }$ in $H_i$ such that $d(h(x_k),h'(x_k))$ is unbounded. This sequence is necessarily diverging in $H_i$, and by Proposition \ref{est} its image is uniformly close to a geodesic line $\gamma_i \subset \HH^3$ (namely,the $i$-th side of the twisted ideal polygon $P$). Since for each $x\in H_i$ the points $u_0(x)$ and $u_0^\prime(x)$ are obtained by rotating $h(x)$ and $h^\prime(x)$ respectively, by some angle with axis $\gamma_i$, we have that $d(u_0(x), h(x))$ and $d(u^\prime_0(x), h^\prime(x))$ are both bounded by $2d$ where $d$ is the distance to $\gamma_i$.  The triangle inequality then yields\[d(h(x_k),h'(x_k))\leq 2d+d(u_0(x_k),u_0'(x_k))+2d\]
 and since the left hand side is unbounded, so is the distance function $d(u_0(x),u'_0(x))$. This completes the proof. 
\end{proof}

\subsection{Principal parts and uniqueness}
We shall conclude by characterizing the non-uniqueness in Theorem \ref{main0}, in terms of the notion of the principal part (Definition \ref{defn:princ}). Throughout this subsection,  $P_0$ shall be an ideal polygon with $n\geq 3$ sides in a totally-geodesic plane $H \subset \HH^3$, and $P$ is a twisted ideal polygon obtained in $\HH^3$ by ``bending" P along a collection of disjoint diagonals (\textit{c.f.} Lemma \ref{bend}). Moreover, by a normalization (post-composition with an isometry) we can assume that one of the cusps of $P$ and $P_0$ has the same ideal boundary point, and we number the cusps in both polygons in a cyclic order starting with this.

\vspace{.05in}

We start with the following lemma:

\begin{lem}\label{lem:princ}
Let $\mathbb{P}$ and $\mathbb{P}^\prime$ be the principal parts of the Hopf differentials of the harmonic maps $h,u:\C \to \HH^3$ that are asymptotic to $P_0$ and $P$ respectively, and are normalized such that a fixed direction in $\C$ is asymptotic to the same ideal point. Then there is a uniform bound
\begin{equation}\label{hub}
    \lvert d(h(x), p_0) - d(u(x), p_0) \rvert < D \text{ for all }x\in\C
\end{equation}
where $p_0$ is a choice of a basepoint in $\HH^3$ and $D$ is independent of $x$,  if and only $\mathbb{P} =\mathbb{P}^\prime$.
\end{lem}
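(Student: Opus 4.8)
The plan is to reduce the biconditional to a comparison of the horizontal distance functions of the two Hopf-differential metrics, and then to invoke the dictionary between principal parts and asymptotic flat geometry from \cite{Gupta21} and \cite{GupWolf}. Write $\phi$ and $\psi$ for the Hopf differentials of $h$ and $u$ respectively; both are polynomial quadratic differentials on $\C$ of degree $n-2$, each carrying $n$ horizontal and $n$ vertical half-planes around $\infty$. The first step is to observe, exactly as in the reduction carried out in the proof of Lemma \ref{Pb}, that far from the zeros of its Hopf differential each of $h$ and $u$ is exponentially close to the map collapsing vertical leaves onto the corresponding geodesic side of its asymptotic polygon (Proposition \ref{est} together with the Remark following Proposition \ref{myprop}), and --- using that $P$ is obtained from $P_0$ by bending along diagonals (Lemma \ref{bend}), so that the cusps of $P$ and $P_0$ are isometric and corresponding geodesic sides have the same length structure --- that there is a uniform estimate
\[
d(h(x),p_0) = d^{\mathrm h}_{4\phi}(x,x_0) + O(1), \qquad d(u(x),p_0) = d^{\mathrm h}_{4\psi}(x,x_0) + O(1)
\]
over all $x\in\C$, where $x_0\in\C$ is a fixed basepoint and $d^{\mathrm h}_{4\phi}(\cdot,x_0)$ denotes the \emph{horizontal} $4\phi$-distance from $x_0$, i.e.\ the distance in the leaf space of the vertical foliation of $\phi$ (following \cite{Wolf0}, as recalled in the proof of Lemma \ref{Pb}); likewise for $\psi$. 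Hence \eqref{hub} holds if and only if $x\mapsto d^{\mathrm h}_{4\phi}(x,x_0)$ and $x\mapsto d^{\mathrm h}_{4\psi}(x,x_0)$ agree up to a uniform additive constant.

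It then suffices to show that this last condition is equivalent to $\mathbb{P}=\mathbb{P}'$. For the direction $\mathbb{P}=\mathbb{P}'\Rightarrow$ bounded discrepancy I would run the argument already used in the proof of Lemma \ref{Pb}, adapting the proof of \cite[Proposition 3.9]{Gupta21}: exhaust $\C$ by regions bounded by polygons $\{G_k\}_{k\ge 1}$, each with $n$ horizontal and $n$ vertical sides in alternation; equality of principal parts together with \cite[Lemma 3.7]{Gupta21} forces the horizontal sides of $G_k$ to have equal length up to a uniform additive constant in the $\phi$- and $\psi$-metrics, and the Claim in the proof of \cite[Proposition 3.9]{Gupta21} then yields that the two horizontal distance functions differ by $O(1)$. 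For the converse I would argue the contrapositive: if $\mathbb{P}\ne\mathbb{P}'$, then --- after the normalization fixing a common asymptotic direction --- $\sqrt{\phi}-\sqrt{\psi}$ differs by an integrable form from the nonzero singular $1$-form $\mathbb{P}-\mathbb{P}'$, and the analysis of the ``metric residues at infinity'' of a polynomial quadratic-differential metric in \cite{GupWolf} (see also \cite[\S3]{Gupta21}) shows that this forces $\lvert d^{\mathrm h}_{4\phi}(x,x_0)-d^{\mathrm h}_{4\psi}(x,x_0)\rvert\to\infty$ along some direction $x\to\infty$; hence the discrepancy is unbounded and \eqref{hub} fails.

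I expect the main obstacle to be the converse direction, specifically the claim that distinct principal parts genuinely force the horizontal distance functions apart with no cancellation; this is the injectivity half of the principal-part-to-asymptotics correspondence and must be extracted with care from \cite{GupWolf} and \cite{Gupta21}, whose statements are phrased for punctured surfaces but apply to $\C=\cp\setminus\{\infty\}$ with a pole of order $\geq 4$ at $\infty$. A secondary technical point, already encountered in the proof of Lemma \ref{Pb}, is ensuring that the additive errors in the displayed estimate above are genuinely uniform in $x$ --- across the finitely many half-planes, inside the cusp regions where the collapsing approximation degrades, and in the distortion introduced by the bending --- for which one relies on the exponential decay in Proposition \ref{est}, the isometry of the cusps of $P$ and $P_0$, and the fact that bending preserves lengths along the geodesic sides.
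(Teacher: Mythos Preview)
Your proposal is correct and follows essentially the same route as the paper: both directions hinge on the exhaustion by polygons $\{G_k\}$ and the comparison of horizontal distances via \cite[Lemma 3.7]{Gupta21} for the forward implication, and on the failure of this comparison when the principal parts differ for the converse. The only notable difference is that for the converse the paper invokes \cite[Lemma 3.8]{Gupta21} directly to produce a diverging sequence along which the horizontal distances separate, whereas you phrase it in terms of $\sqrt{\phi}-\sqrt{\psi}$ and the metric-residue analysis; citing Lemma 3.8 would sharpen the step you flag as the main obstacle.
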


\begin{proof}
In one direction, assume that the two principal parts are equal. Then the argument in the proof of \cite[Proposition 3.9]{Gupta21} can be adapted here.  Namely, consider an exhaustion of $\C$ with polygons $\{G_k\}_{k\geq 1}$ having $n$ alternating horizontal and vertical sides. By the equality of principal parts and \cite[Lemma 3.7]{Gupta21}, the lengths of these sides in both Hopf differential metrics differ by a uniformly bounded constant.  (We can choose this exhaustion such that the lengths of the sides of $G_k$ are all $L_k$ up to a uniform additive error, and  $L_k \to \infty$ as $k\to \infty$). By the normalization,  both maps will take the $i$-th vertical side into the $i$-th cusp of $P$. Moreover, by Proposition \ref{est} and the remark following Proposition \ref{myprop}, and an argument as in the Claim in the proof of \cite[Proposition 3.9]{Gupta21}, the $i$-th side will be a distance $L_k + O(1)$ into the $i$-th cusp, for both maps. Choosing a basepoint $p_0 \in \HH^3$, \eqref{hub} follows, namely the difference of distances from $p_0$ is uniformly bounded. 

Conversely, if $\mathbb{P} \neq \mathbb{P}^\prime$, then by \cite[Lemma 3.8]{Gupta21} there is a sequence of points $z_i$ diverging in $\C$, such that the horizontal distances of $z_i$ from a fixed basepoint $z_0\in \C$ with respect to the two Hopf differential metrics have an unbounded difference. By passing to a subsequence, one can assume that this sequence of points lie in the $i$-th vertical half-plane in both metrics. Then by the same estimates as above, the distance into the $i$-th cusp of $P_0$ that $h$ maps the $i$-th vertical side into,  and the distance  into the $i$-th cusp of $P$ of the image of same side under $u$, have an unbounded difference.  \end{proof}

As a corollary, we then obtain:

\begin{prop}\label{prop:princ} Given any principal part $\mathbb{P}$ compatible with the planar polygon $P_0$, there exists a unique harmonic map $u:\C\to \HH^3$ that is asymptotic to the twisted ideal polygon $P$.

\vspace{.05in} 

(Here ``compatible" is in the sense of \cite[Definition 2.30]{Gupta21}, namely that when $n$ is even, the real part of the residue of $\mathbb{P}$ equals the metric residue of the $P_0$, as defined in \cite[Definition 2.9]{Gupta21}.)

\end{prop}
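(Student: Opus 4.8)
The plan is to combine the existence result (Theorem \ref{main}, equivalently Theorem \ref{main0}) with the characterization of asymptotic behaviour in terms of principal parts (Lemma \ref{lem:princ}), plus the uniqueness-up-to-distance-bound argument that already appeared in the proof of Proposition \ref{rtc}. I would argue in three steps: first existence with the prescribed principal part, then uniqueness, then the necessity of the compatibility hypothesis.

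\emph{Existence.} Given a compatible principal part $\mathbb{P}$, first use \cite[Proposition 3.12]{Gupta21} to produce the unique harmonic map $h:\C\to\HH^2$ asymptotic to $P_0$ whose Hopf differential has principal part $\mathbb{P}$; compatibility of $\mathbb{P}$ with $P_0$ is exactly what makes this applicable. Now feed this particular $h$ into the construction of \S3.1: as noted in the remark following Lemma \ref{Pb}, the auxiliary harmonic map $g:\C\to G$ may be chosen to have a Hopf differential with the same principal part as any prescribed $h$, so the whole machinery of \S3 goes through verbatim and produces a harmonic map $u_\infty:\C\to\HH^3$ asymptotic to $P$. It remains to check that $\mathrm{Hopf}(u_\infty)$ has principal part $\mathbb{P}$. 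For this, recall from Corollary \ref{Psib} and Proposition \ref{rtc} that $u_\infty$ is a uniformly bounded distance from the pleated plane $\Xi$, hence from $u_0$, hence (by \eqref{distG4}, which compares $u_0$ to $h$ along the geodesic sides) a uniformly bounded distance from $h$ in the sense of \eqref{hub}. By the ``only if'' direction of Lemma \ref{lem:princ}, this forces the principal part of $\mathrm{Hopf}(u_\infty)$ to equal that of $\mathrm{Hopf}(h)$, namely $\mathbb{P}$.

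\emph{Uniqueness.} Suppose $u, u':\C\to\HH^3$ are both harmonic, both asymptotic to $P$, and both have Hopf differential with principal part $\mathbb{P}$. By the ``if'' direction of Lemma \ref{lem:princ} applied to each of $u,u'$ (comparing with the fixed $h$ above), both $d(u(x),p_0)$ and $d(u'(x),p_0)$ differ from $d(h(x),p_0)$ by a uniformly bounded amount; subtracting, $\lvert d(u(x),p_0)-d(u'(x),p_0)\rvert$ is uniformly bounded. Combined with the fact that $u$ and $u'$ are asymptotic to the same twisted ideal polygon $P$ (so their images, projected to a compact core, stay at bounded distance), one concludes $d_{\HH^3}(u(x),u'(x))$ is uniformly bounded on $\C$ --- here one may reproduce the bounded-distance argument from the proof of Lemma \ref{nunnes}, or simply note that asymptoticity to the same $P$ together with the bounded difference of radial distances pins down the image of each $x$ up to bounded error. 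Then Lemma \ref{sy} shows $d_{\HH^3}(u(x),u'(x))$ is subharmonic on $\C$, hence bounded and subharmonic, hence constant (parabolicity of $\C$); arguing exactly as in \cite[Lemma 3.11]{Sagman} as in Proposition \ref{rtc}, the constant is zero, so $u=u'$.

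\emph{Necessity of compatibility.} When $n$ is even, a principal part that is not compatible with $P_0$ (its residue having real part different from the metric residue of $P_0$) cannot arise: by the remark following Lemma \ref{Pb} and \cite[Definition 2.30]{Gupta21}, any harmonic map asymptotic to $P$ has a Hopf differential whose principal part is automatically compatible with $P_0$, since the transverse measures of the foliation $\mathcal{F}$ are hyperbolic lengths along $P_0$ and the metric residue is an invariant of that data. So the compatibility hypothesis is not a restriction but a description of exactly which principal parts occur.

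\emph{Main obstacle.} I expect the delicate point to be the uniqueness step --- specifically, upgrading ``bounded difference of radial distances from $p_0$, plus same asymptotic polygon'' to ``bounded pointwise distance $d_{\HH^3}(u(x),u'(x))$'', since a priori two maps could escape the same cusp at comparable rates while drifting apart transversally within the cusp neighbourhood. This is handled by the same kind of estimate as in Lemma \ref{nunnes}: in each horizontal half-plane both maps are exponentially close to a collapsing map onto the \emph{same} geodesic side $\gamma_i$ of $P$ (remark following Proposition \ref{myprop}), and in each vertical half-plane both map into the \emph{same} cusp $\widehat C_i$, which is quasi-isometrically a ray; so the bounded difference of distances-to-$p_0$ does control the full distance. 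Once that is in place, Lemma \ref{sy} and parabolicity of $\C$ finish the argument routinely.
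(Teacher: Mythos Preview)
Your proposal is correct and follows essentially the same approach as the paper. The existence argument matches almost verbatim: choose $h$ with principal part $\mathbb{P}$, run the heat flow, use the uniform distance bound from Corollary \ref{Psib} together with Lemma \ref{lem:princ} to identify the principal part of $u_\infty$.

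For uniqueness there is a minor organizational difference worth noting. You compare each of $u,u'$ to the planar map $h$ via Lemma \ref{lem:princ}, obtaining only a bound on the difference of \emph{radial} distances, and then have to work to upgrade this to a pointwise bound on $d_{\HH^3}(u(x),u'(x))$ --- you correctly flag this as the delicate step and sketch the resolution. The paper instead compares $u_1$ and $u_2$ to each other directly, invoking the argument of \cite[Proposition 3.9]{Gupta21}: since both maps are asymptotic to the \emph{same} polygon $P$, the comparability of the two Hopf-differential metrics (\cite[Lemma 3.7]{Gupta21}) together with the collapsing-map estimates (remark after Proposition \ref{myprop}) gives the pointwise bound immediately, without the detour through $h$ and $P_0$. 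This is slightly cleaner, but your route also works and the content is the same. Your additional paragraph on necessity of compatibility is not in the paper's proof (the paper treats it as a standing hypothesis), but it is a reasonable remark.
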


\begin{proof}
    First, we prove the existence statement. 
    As observed at the end of \S3.1.1, by \cite[Proposition 3.12]{Gupta21} one can choose the harmonic map $h:\C \to H \subset \HH^3$ to have principal part $\mathbb{P}$. From the construction of the initial map $u_0$ by modifying $h$, it follows that the distance functions $d(h(x), p_0)$ and  $d(u_0(x), p_0)$ are uniformly bounded from each other. From Corollary \ref{Psib}, it follows that there is also a uniform distance bound $d(u_0(x), u_\infty(x)) \leq M$, where $u = u_\infty$ is the limiting harmonic map that the harmonic map heat flow starting with $u_0$ converges to (by Theorem \ref{main}). 
    Putting these together, it follows that \eqref{hub} holds. Hence, by Lemma \ref{lem:princ}, the principal part of the Hopf differential of $u$ equals $\mathbb{P}$.

    The uniqueness follows from the same argument as in the proof of \cite[Proposition 3.9]{Gupta21}: if there are two harmonic maps $u_1,u_2:\C \to \HH^3$ asymptotic to the same twisted ideal polygon $P$, with Hopf differentials  having the same principal part $\mathbb{P}$, then the distance estimates for the map (see the remark following Proposition \ref{myprop}) and comparability of the flat metrics (see \cite[Lemma 3.7]{Gupta21}) implies that there is a uniform distance bound between the two maps. (Here, it is crucial that they are both asymptotic to the same twisted ideal polygon.) As observed in \S4.1, this implies that $u_1=u_2$. \end{proof}

  \noindent \textit{Remark.} Note that the straightened planar polygon $P_0$ given by Lemma \ref{bend} is not unique, however the notion of compatibility above does not depend on the choice of such a straightening. Indeed, one can define the ``metric residue" of the twisted ideal polygon $P$ similar to \cite[Definition 2.9]{Gupta21} by truncating the sides by a a choice of a horosphere at each ideal vertex of $P$.

\bibliography{thesis_ref}{}
\bibliographystyle{alpha}

\end{document}